\newtheorem{theorem}{Theorem}[section]
\newtheorem{proposition}[theorem]{Proposition}
\newtheorem{lemma}[theorem]{Lemma}
\newtheorem{claim}[theorem]{Claim}
\newtheorem{corollary}[theorem]{Corollary}
\theoremstyle{definition}
\newtheorem{definition}[theorem]{Definition}
\newtheorem{remark}[theorem]{Remark}
\numberwithin{equation}{section}
\numberwithin{figure}{section}
\newcommand{\su}{\subseteq}
\newcommand{\sm}{\setminus}
\renewcommand{\l}{\ell}
\newcommand{\F}{\mathbb{F}}
\newcommand{\Fp}{\mathbb{F}_p}
\newcommand{\Fpn}{\mathbb{F}_p^{n}}
\newcommand{\Z}{\mathbb{Z}}
\newcommand{\spn}{\operatorname{span}}
\newcommand{\proj}{\operatorname{proj}}
\newcommand{\PP}{\mathbb{P}}
\newcommand{\EE}{\mathbb{E}}
\begin{document}
\title{Finding solutions with distinct variables to systems of linear equations over $\mathbb{F}_p$}
\author{Lisa Sauermann\thanks{School of Mathematics, Institute for Advanced Study, Princeton, NJ 08540. Email: {\tt lsauerma@mit.edu}. Research supported by NSF Grant CCF-1900460 and NSF Award DMS-2100157.}}

\maketitle

\begin{abstract}
Let us fix a prime $p$ and a homogeneous system of $m$ linear equations $a_{j,1}x_1+\dots+a_{j,k}x_k=0$ for $j=1,\dots,m$ with coefficients $a_{j,i}\in\mathbb{F}_p$. Suppose that $k\geq 3m$, that $a_{j,1}+\dots+a_{j,k}=0$ for $j=1,\dots,m$ and that every $m\times m$ minor of the $m\times k$ matrix $(a_{j,i})_{j,i}$ is non-singular. Then we prove that for any (large) $n$, any subset $A\subseteq\mathbb{F}_p^n$ of size $|A|> C\cdot \Gamma^n$ contains a solution $(x_1,\dots,x_k)\in A^k$ to the given system of equations such that the vectors $x_1,\dots,x_k\in A$ are all distinct. Here, $C$ and $\Gamma$ are constants only depending on $p$, $m$ and $k$ such that $\Gamma<p$. 

The crucial point here is the condition for the vectors $x_1,\dots,x_k$ in the solution $(x_1,\dots,x_k)\in A^k$ to be distinct. If we relax this condition and only demand that $x_1,\dots,x_k$ are not all equal, then the statement would follow easily from Tao’s slice rank polynomial method. However, handling the distinctness condition is much harder, and requires a new approach. While all previous combinatorial applications of the slice rank polynomial method have relied on the slice rank of diagonal tensors, we use a slice rank argument for a non-diagonal tensor in combination with combinatorial and probabilistic arguments.
\end{abstract}

\section{Introduction}

Given a linear system of equations with coefficients in $\Fp$ for some fixed prime $p$, what is the largest size of a subset $A\su \Fpn$ which does not contain a (non-trivial) solution to the given linear system of equations? This is a fundamental question in additive combinatorics, and it can be viewed as the finite field analog of similar questions for subsets $A\su \{1,\dots,N\}$ whose history dates back many decades (see e.g.\ \cite{erdos-turan-1936,komlos-sulyok-szemeredi-1975}). For example, a $k$-term arithmetic progression can be described by a system of $k-2$ linear equations (with $k$ variables), and bounding the largest possible size of a $k$-term-progression-free subset in $\Fpn$ is an intensively studied and still wide open problem \cite{bateman-katz, edel, ellenberg-gijswijt, elsholtz-pach,  green-tao-1, green-tao-2, green-tao-2a, lin-wolf, meshulam}.

Let us from now on fix a prime $p$ and consider a linear system of $m$ equations in $k$ variables of the form
\begin{align}
a_{1,1}x_1+\dots+a_{1,k}x_k&=0\notag\\
\vdots\qquad\qquad&\tag{$\star$}\\
a_{m,1}x_1+\dots+a_{m,k}x_k&=0\notag
\end{align}
with coefficients $a_{j,i}\in \Fp$ for $j=1,\dots,m$ and $i=1,\dots,k$. For large $n$, we are interested in the largest possible size of a subset $A\su \Fpn$ such that there is no (non-trivial) solution $(x_1,\dots,x_k)\in A^k$ to ($\star$).

If we have $a_{j,1}+\dots+a_{j,k}\neq 0$ for some $j\in \{1,\dots,m\}$ (i.e.\ if for one of the $m$ equations the coefficients do not sum up to zero), then it is easy to see that there exists a subset $A\su \Fpn$ of size $p^{n-1}=(1/p)\cdot p^n$ such that the system ($\star$) does not have any solutions $(x_1,\dots,x_k)\in A^k$ (indeed, we can take $A$ to be the set of all vectors in $\Fpn$ whose first coordinate is $1$). For fixed $p$, this means that up to constant factors $A$ can be as large as the entire space $\Fpn$. However, the problem becomes much more interesting when $a_{j,1}+\dots+a_{j,k}=0$ for $j=1,\dots,m$.

So let us from now on assume that $a_{j,1}+\dots+a_{j,k}=0$ for $j=1,\dots,m$. Note that then $(x\dots,x)$ is a solution to the system ($\star$) for every $x\in A$. However, we can ask for the largest size of a subset $A\su \Fpn$ without a solution $(x_1,\dots,x_k)\in A^k$ to ($\star$) where $x_1,\dots,x_k$ are not all equal. It is a highly non-trivial result to show that we must have $|A|=o(p^n)$ as $n\to \infty$ (this can for example be deduced from \cite[Theorem 1]{kral-serra-vena} or \cite[Theorem 2.2]{shapira}, both of which rely on deep results on hypergraph regularity, and the actual quantitative bounds obtained this way are very poor).

Building upon a breakthrough of Ellenberg and Gijswijt \cite{ellenberg-gijswijt} on bounding the size of $3$-term-progression-free subsets of $\Fpn$ as well as on prior work of Croot, Lev, and Pach \cite{croot-lev-pach}, Tao \cite{tao} introduced a new polynomial method, which is now called the \emph{slice rank polynomial method}. This method immediately gives much stronger upper bounds on the size of $A$ in the question above if we assume that the number $k$ of variables in the system ($\star$) is sufficiently large with respect to the number $m$ of equations. In fact, assuming that $k\geq 2m+1$, one can prove that the size of $A$ needs to be \emph{exponentially smaller} than $p^n$ if there is no solution $(x_1,\dots,x_k)\in A^k$ to ($\star$) where $x_1,\dots,x_k$ are not all equal.

\begin{theorem}[Tao]\label{theo-tao}
For any fixed integers $m\geq 1$ and $k\geq 2m+1$ and a fixed prime $p$, there exists a constant $1\leq \Gamma_{p,m,k}<p$ such that the following holds: For any coefficients $a_{j,i}\in \Fp$ for $j=1,\dots,m$ and $i=1,\dots, k$ with $a_{j,1}+\dots+a_{j,k}=0$ for $j=1,\dots,m$, for any non-negative integer $n$ and any subset $A\su \Fpn$ of size $|A|> (\Gamma_{p,m,k})^n$, the system ($\star$) has a solution $(x_1,\dots,x_k)\in A^k$ such that the vectors $x_1,\dots,x_k\in A$ are not all equal.
\end{theorem}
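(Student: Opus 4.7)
The plan is to apply Tao's slice rank polynomial method to the indicator polynomial of solutions. Define
\[
F(x_1,\dots,x_k) \;=\; \prod_{j=1}^{m}\prod_{i=1}^{n}\Bigl(1-(a_{j,1}x_1+\dots+a_{j,k}x_k)_{i}^{p-1}\Bigr),
\]
where $(v)_i$ denotes the $i$-th coordinate of $v\in\Fpn$. By Fermat's little theorem, $F(x_1,\dots,x_k)=1$ if $(x_1,\dots,x_k)$ solves ($\star$) and $=0$ otherwise. Assume for contradiction that $|A|>(\Gamma_{p,m,k})^n$ but that every solution $(x_1,\dots,x_k)\in A^k$ has $x_1=\dots=x_k$. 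Since $a_{j,1}+\dots+a_{j,k}=0$, the diagonal tuples $(x,\dots,x)$ always solve ($\star$), so $F|_{A^k}$ is exactly the diagonal tensor $\mathbf{1}[x_1=\dots=x_k]$ on $A^k$, whose slice rank is known (by Tao) to equal $|A|$.

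Next I would bound the slice rank of $F|_{A^k}$ from above by expanding $F$ as a polynomial in the coordinates $x_{l,i}$. In each coordinate $i$, the polynomial $F$ has total degree at most $m(p-1)$ in $x_{1,i},\dots,x_{k,i}$, and we may reduce each exponent modulo $x^{p}=x$ so that exponents lie in $\{0,\dots,p-1\}$. Thus every monomial has the form $\prod_{l,i}x_{l,i}^{e_{l,i}}$ with $e_{l,i}\in\{0,\dots,p-1\}$ and $\sum_{l}e_{l,i}\le m(p-1)$ for each $i$. Summing over $i$, the total degree of such a monomial is at most $mn(p-1)$, so by pigeonhole some variable $x_l$ carries total degree $\sum_i e_{l,i}\le mn(p-1)/k$. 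Grouping monomials by the (lexicographically first) such $l$ gives a decomposition
\[
F(x_1,\dots,x_k) \;=\; \sum_{l=1}^{k}\sum_{\alpha}\; x_{l}^{\alpha}\cdot g_{l,\alpha}(x_{1},\dots,\widehat{x_l},\dots,x_{k}),
\]
where $\alpha=(\alpha_1,\dots,\alpha_n)$ ranges over exponent vectors with $\alpha_i\in\{0,\dots,p-1\}$ and $\sum_i \alpha_i\le mn(p-1)/k$. Restricting to $A^k$ exhibits a slice rank decomposition, so the slice rank of $F|_{A^k}$ is at most $k\cdot N$, where $N$ is the number of such $\alpha$.

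The key counting estimate is that $N\le \Gamma_0^{n}$ for some $\Gamma_0<p$, provided $k\ge 2m+1$. Indeed, $N$ equals the number of lattice points $\alpha\in\{0,\dots,p-1\}^n$ with coordinate average at most $m(p-1)/k$. When $k\ge 2m+1$ one has $m(p-1)/k<(p-1)/2$, which is strictly less than the mean $(p-1)/2$ of the uniform distribution on $\{0,\dots,p-1\}$; a standard Chernoff / Cram\'er large-deviations bound on the i.i.d.\ sum $\alpha_1+\dots+\alpha_n$ then yields $N\le \Gamma_0^n$ with $\Gamma_0<p$ depending only on $p,m,k$. Combining with the lower bound gives $|A|=\operatorname{sr}(F|_{A^k})\le k\,\Gamma_0^n\le (\Gamma_{p,m,k})^n$ for $\Gamma_{p,m,k}\in(\Gamma_0,p)$ chosen to absorb the factor $k$, contradicting $|A|>(\Gamma_{p,m,k})^n$.

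The main technical obstacle is the last step: obtaining $\Gamma_0<p$ strictly, which is exactly where the hypothesis $k\ge 2m+1$ (rather than just $k\ge 2m$) is used. Everything else is bookkeeping: the polynomial identity for $F$, the pigeonhole grouping of monomials by lowest-degree variable, and Tao's lemma that the diagonal tensor on $A^k$ has slice rank $|A|$. The distinctness condition (as opposed to merely "not all equal") plays no role here and is precisely what makes the theorem stated in the abstract substantially harder than the one I am asked to prove.
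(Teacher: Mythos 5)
Your approach is essentially identical to the paper's: you define the same indicator tensor via the polynomial
$F(x_1,\dots,x_k)=\prod_{j,i}\bigl(1-(a_{j,1}x_1+\cdots+a_{j,k}x_k)_i^{p-1}\bigr)$,
bound its slice rank from above by expanding into monomials, reducing exponents mod $x^p=x$, and pigeonholing on a low-degree variable to get the bound $k\cdot N$, and you control $N$ by exactly the Cram\'er-type large-deviations estimate that the paper carries out in Claim~\ref{claim-bound-number-tuples} (there phrased via Markov's inequality applied to $z^{d_1+\cdots+d_n}$, which is the same computation). The lower bound via Tao's diagonal slice-rank lemma is also the same.

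The one genuine gap is your final step. You claim that
$|A|\le k\,\Gamma_0^n\le (\Gamma_{p,m,k})^n$ ``for $\Gamma_{p,m,k}\in(\Gamma_0,p)$ chosen to absorb the factor $k$.'' This does not work with a \emph{fixed} constant $\Gamma_{p,m,k}<p$ for all $n$. The inequality $k\Gamma_0^n\le\Gamma^n$ requires $(\Gamma/\Gamma_0)^n\ge k$, which fails for $n=0$ outright, and for $n=1$ it requires $\Gamma\ge k\Gamma_0$, which can exceed $p$ (for instance $p=3$, $m=1$, $k=3$ gives $\Gamma_0\approx 2.756$, so $k\Gamma_0\approx 8.27>3$). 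Enlarging $\Gamma_{p,m,k}$ toward $p$ cannot help, because the theorem demands $\Gamma_{p,m,k}<p$ strictly. The paper removes the factor $k$ by a tensor power trick: if $A\su\Fpn$ has no non-constant solution, neither does $A^m\su\Fp^{nm}$, so $|A|^m=|A^m|\le k\,\Gamma_0^{nm}$, whence $|A|\le k^{1/m}\Gamma_0^n$ for every $m$, and letting $m\to\infty$ gives $|A|\le\Gamma_0^n$. Adding this step to your proposal repairs it and makes it match the paper's proof exactly, with $\Gamma_{p,m,k}=\Gamma_0$.
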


As mentioned above, Theorem \ref{theo-tao} follows immediately from Tao's slice rank polynomial method  \cite{tao}. For the reader's convenience we will present the proof in Section \ref{sect-background}.

Recall that our original question was to bound the size of a subset $A\su \Fpn$ which does not contain a non-trivial solution to the system ($\star$). Obviously, it needs to be specified what we mean by ``non-trivial'' here. If a solution  $(x_1,\dots,x_k)\in A^k$ is considered non-trivial as soon as $x_1,\dots,x_k\in A$ are not all equal, then Theorem \ref{theo-tao}  provides a strong bound for $|A|$ as long as $k\geq 2m+1$. However, there are also several other notions of ``non-trivial'' solutions to a linear equation or a system of linear equations (see for example Ruzsa's work \cite{ruzsa-1, ruzsa-2}), and a particularly natural such notion is to demand for a ``non-trivial'' solution $(x_1,\dots,x_k)\in A^k$ to consist of \emph{distinct} $x_1,\dots,x_k$.

In the integer setting, the problem of bounding the largest size of a subset $A\su \{1,\dots,N\}$ with no solution $(x_1,\dots,x_k)\in A^k$ to a given linear system of equations where $x_1,\dots,x_k$ are distinct has already been considered almost fifty years ago \cite{komlos-sulyok-szemeredi-1975}. Here, we consider the same problem in the setting of $\Fpn$, i.e.\ we are asking for the largest size of a subset $A\su \Fpn$ which does not contain a solution $(x_1,\dots,x_k)\in A^k$ to ($\star$) with distinct $x_1,\dots,x_k$.

Similar to Theorem \ref{theo-tao} above, our main result states that if the number $k$ of variables in the system ($\star$) is sufficiently large with respect to the number $m$ of equations and if the system ($\star$) is reasonably generic, then the size of $A$ must be exponentially smaller than $p^n$.

\begin{theorem}\label{theo-distinct}
For any fixed integers $m\geq 1$ and $k\geq 3m$ and a fixed prime $p$, there exist constants $C_{p,m,k}\geq 1$ and $1\leq \Gamma_{p,m,k}^*<p$ such that the following holds: Let $a_{j,i}\in \Fp$ for $j=1,\dots,m$ and $i=1,\dots, k$ be coefficients with $a_{j,1}+\dots+a_{j,k}=0$ for $j=1,\dots,m$ such that every $m\times m$ minor of the $m\times k$ matrix $(a_{j,i})_{j,i}$ is non-singular. Then for any non-negative integer $n$ and any subset $A\su \Fpn$ of size $|A|> C_{p,m,k}\cdot (\Gamma_{p,m,k}^*)^n$, the system  ($\star$) has a solution $(x_1,\dots,x_k)\in A^k$ such that the vectors $x_1,\dots,x_k\in A$ are all distinct.
\end{theorem}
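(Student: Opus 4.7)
The approach is to push the slice rank polynomial method of Theorem \ref{theo-tao} beyond diagonal tensors. Recall that Theorem \ref{theo-tao} is proved by taking the solution-indicator tensor $F(x_1,\dots,x_k) = \prod_{j=1}^{m}\bigl(1-(a_{j,1}x_1+\dots+a_{j,k}x_k)^{p-1}\bigr)$, observing that on $A^k$ its slice rank is at most $C\cdot \Gamma^n$ for some $\Gamma<p$ (by counting monomials of low per-variable degree), and comparing this with the $|A|$ trivial diagonal solutions $(x,\dots,x)$. To upgrade ``not all equal'' to ``all distinct'', one needs a refinement that separates fully-distinct solutions from solutions with at least one coincidence.

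The first step I would take is a combinatorial reduction. For each nontrivial equivalence relation $\pi$ on $\{1,\dots,k\}$ with blocks $B_1,\dots,B_\ell$, the solutions in $A^k$ with coincidence pattern exactly $\pi$ are in bijection with tuples $(y_1,\dots,y_\ell)\in A^\ell$ of distinct vectors satisfying the reduced system with coefficients $b_{j,s}=\sum_{i\in B_s}a_{j,i}$. Each row of the reduced system still sums to zero, and the non-singular $m\times m$ minor hypothesis ensures that no block of size at most $m$ can make a column vanish, so the reduced matrix remains generic enough that reduced slice rank bounds are meaningful. It then suffices to upper bound the number of solutions of each coincidence type $\pi$, together with the number of trivial solutions, and subtract from the number of ``not all equal'' solutions guaranteed by Theorem \ref{theo-tao}.

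The heart of the argument is the slice rank bound on a non-diagonal tensor, as advertised in the abstract. Concretely, I would construct, for each relevant coincidence pattern, an auxiliary tensor which is nonvanishing precisely on the tuples we want to count but which is \emph{not} a diagonal tensor, and I would bound its slice rank via the same monomial counting as in Theorem \ref{theo-tao} applied to a slightly enlarged polynomial. A probabilistic ingredient---most naturally a random restriction of $A$, a random substitution of free coordinates in $\Fpn$, or a random re-labelling among the $k$ variables---would be needed to break symmetry and reduce many non-diagonal patterns to a single canonical one amenable to the slice rank bound. The condition $k\geq 3m$, as opposed to merely $k\geq 2m+1$, should provide the extra variables needed so that even after collapsing a block and freeing some coordinates, the reduced system still has enough slack for the slice rank argument to yield $\Gamma^*<p$.

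The main obstacle is precisely the non-diagonal slice rank step. In every previous combinatorial application of the slice rank method, the target tensor has been diagonal, and it is this diagonality that allows one to convert a slice rank upper bound into a cardinality bound (the slice rank of the identity diagonal tensor on $A$ being exactly $|A|$). Designing a non-diagonal tensor from which the count of distinct patterned solutions can be read off, while keeping the per-variable polynomial degree low enough to ensure the base of the exponent stays strictly below $p$, is the principal novelty required; I expect this to be the most delicate and technically involved part of the proof, and the place where the combinatorial parameterization of coincidence patterns has to be dovetailed carefully with the probabilistic averaging and the polynomial method.
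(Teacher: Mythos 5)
Your proposal correctly identifies that a non-diagonal slice rank argument is needed, but the counting strategy you build around it has a fundamental gap, and the overall route differs substantially from what turns out to work.

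The central problem is the "subtract from the number of not-all-equal solutions" step. Theorem \ref{theo-tao} is an existence statement, not a counting statement: the slice rank method yields a contradiction to the nonexistence of a non-diagonal solution, but it does not hand you a lower bound on the number of such solutions that could be compared against an upper bound on degenerate solutions. To make your subtraction scheme precise you would need a supersaturation version of the slice rank bound quantitatively strong enough to dominate the count of solutions with a fixed coincidence pattern, and no such statement is available off the shelf. Moreover, your reduction to the collapsed system is not well-posed for all patterns $\pi$: if a block $B_s$ has size larger than $m$, the summed column $b_{j,s}=\sum_{i\in B_s} a_{j,i}$ may vanish, so the reduced system need not retain the genericity you would need, and in any case once $\ell < 2m+1$ the slice rank argument on the reduced system is out of range. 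Finally, the "probabilistic ingredient" you suggest (random restriction, random substitution of coordinates, or random re-labelling of indices) is not the right kind of randomness; what is needed is something that decorrelates the geometry of a solution from its coincidence type.

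The paper's actual route is rather different and resolves these issues. It first proves a high-rank analogue of Theorem \ref{theo-tao} (Theorem \ref{theo-rank}): for $k\geq 2m+r-1$, a large $A$ must contain a solution $(x_1,\dots,x_k)$ with $\dim\spn(x_1,\dots,x_k)\geq r$. This is where the non-diagonal slice rank input enters, via a lemma of Sawin and Tao (Lemma \ref{lemma-sawin-tao} and its Corollary \ref{coro-sawin-tao}): the relevant tensor is supported on tuples whose indices are constant along a fixed partition $[k]\sm I = J_1\cup\dots\cup J_t$ arising from a weight-maximal admissible set $I$ and the lines $\spn(\proj_{\Fpn/U}(x_j))$ in the quotient by $U=\spn(x_i\mid i\in I)$. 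The proof of Theorem \ref{theo-rank} is an induction on a carefully defined weight $\omega(x_1,\dots,x_k)$, combining this slice rank bound with a random \emph{subspace} sampling step: one passes to $A\cap V$ for a random low-dimensional subspace $V\su\Fpn$ and deletes a bounded number of bad solutions. Theorem \ref{theo-distinct} is then deduced from Theorem \ref{theo-rank} via a second induction (Theorem \ref{theo-distinct-induction}) on the number $\ell$ of distinct entries, again driven by random subspace sampling; the high-rank conclusion (with $r=m+1$, whence the hypothesis $k\geq 3m$) is what guarantees that a repeated entry can be traded for a fresh one. So the roles of rank, weight, and subspace sampling — none of which appear in your sketch — are essential, and the non-diagonal tensor is produced by this structural framework rather than by directly encoding coincidence patterns.
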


While it may seem at first that it should not make much of a difference whether one demands $x_1,\dots,x_k\in A$ to be distinct or to be not all equal, it is in fact much more difficult to prove the existence of a solution $(x_1,\dots,x_k)\in A^k$ with distinct $x_1,\dots,x_k$ as in Theorem \ref{theo-distinct}. In fact, in some cases demanding such a distinct solution can require qualitatively different bounds for the size of $A$ (see Theorems 2 and 4 in \cite{mimura-tokushige-1}, and see also Theorems 3.2 and 3.3 in \cite{ruzsa-1}). The slice rank polynomial method argument proving Theorem \ref{theo-tao} completely breaks down when requiring $x_1,\dots,x_k$ to be distinct, and new ideas are required in order to prove Theorem \ref{theo-distinct}. In particular, our proof uses a variant of the slice rank polynomial method that has not appeared in combinatorial applications before.

The problem of bounding the size of a subset of $A\su \Fpn$ not containing a solution with distinct variables to a given linear equation or system of linear equations has been studied by various authors in the past. In the case of the single equation $x_1+\dots+x_p=0$, which is very closely related to the Erd\H{o}s-Ginzburg-Ziv problem in discrete geometry, Naslund \cite{naslund} proved a bound of the form $|A|\leq C_p\cdot (\Gamma_p)^n$ for some $\Gamma_p$ between $0.84p$ and $0.92p$ (the constant $C_p$ was later improved in \cite{fox-sauermann}). The best current bound in this case is $|A|\leq C_p\cdot (2\sqrt{p})^n$ due to the author \cite{sauermann}.

In the case of a general single linear equation $a_1x_1+\dots+a_kx_k=0$ with $a_1+\dots+a_k=0$ (i.e.\ in the case of $m=1$), Theorem \ref{theo-distinct} has been proved by Mimura and Tokushige \cite[Theorem 1]{mimura-tokushige-2}, and this case also follows from \cite[Theorem 5.1]{sauermann} with significantly better bounds for $\Gamma_{p,1,k}^*$. Different specfic examples of linear systems of multiple equations have been considered in \cite{mimura-tokushige-jcta, mimura-tokushige-1, mimura-tokushige-2}, but Theorem \ref{theo-distinct} is the first relatively general result for systems of multiple equations.

It is worth noting that one cannot hope to remove the assumption in Theorem \ref{theo-distinct} on the non-singularity of the $m\times m$ minors of the matrix $(a_{j,i})_{j,i}$. Indeed, suppose that the equation $x_1-x_2=0$ was part of the system ($\star$) or could be obtained as a linear combination of the equations in ($\star$). Then clearly ($\star$) does not have any solutions $(x_1,\dots,x_k)$ with distinct $x_1,\dots,x_k$. It may potentially be possible to weaken the non-singularity assumption in a way that excludes such situations (see also the discussion in Section \ref{sect-concluding-remarks}), but this example shows that some assumption is necessary.

The proof of Theorem \ref{theo-distinct} relies on probabilistic subspace sampling arguments and combinatorial ideas, as well as on a new way to apply the slice rank polynomial method. A key step for proving Theorem \ref{theo-distinct} will be to show the following theorem.

\begin{theorem}\label{theo-rank}
For any fixed integers $m\geq 1$, $r\geq 2$ and $k\geq 2m+r-1$ and a fixed prime $p$, there exist constants $C_{p,m,k,r}^{\textnormal{rank}}\geq 1$ and $1\leq \Gamma_{p,m,k,r}^{\textnormal{rank}}<p$ such that the following holds: For any coefficients $a_{j,i}\in \Fp$ for $j=1,\dots,m$ and $i=1,\dots, k$ with $a_{j,1}+\dots+a_{j,k}=0$ for $j=1,\dots,m$, for any non-negative integer $n$ and any subset $A\su \Fpn$ of size $|A|> C_{p,m,k,r}^{\textnormal{rank}}\cdot (\Gamma_{p,m,k,r}^{\textnormal{rank}})^n$, the system  ($\star$) has a solution $(x_1,\dots,x_k)\in A^k$ such that  the subspace $\spn(x_1,\dots,x_k)\su \Fpn$ spanned by the vectors $x_1,\dots,x_k$ has dimension $\dim\spn(x_1,\dots,x_k)\geq r$.
\end{theorem}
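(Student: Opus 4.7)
The plan is to prove Theorem~\ref{theo-rank} by induction on $r$, building on Tao's Theorem~\ref{theo-tao} and incorporating a new slice-rank argument for a non-diagonal tensor combined with probabilistic and combinatorial inputs.

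For the base case $r=2$, the hypothesis $k\geq 2m+1$ coincides exactly with Tao's threshold, yet we must reach the strictly stronger conclusion that some solution satisfies $\dim\spn(x_1,\dots,x_k)\geq 2$. I would argue by contradiction: suppose every solution $(x_1,\dots,x_k)\in A^k$ to ($\star$) has $\dim\spn(x_1,\dots,x_k)\leq 1$, i.e., all vectors lie on a common line through the origin. My plan is to introduce an augmented tensor
$$
T^{*}(x_1,\dots,x_k,v) \;=\; \prod_{j=1}^{m}\mathbf{1}\!\Bigl[\,\sum_{i=1}^{k}a_{j,i}x_i=0\,\Bigr]\cdot\prod_{i=1}^{k}\mathbf{1}\!\left[\langle v,x_i\rangle=0\right]
$$
on $(\F_p^n)^{k+1}$, whose support detects solutions lying in the hyperplane $v^{\perp}$. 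Under the contradictory hypothesis, for each solution in $A^k$ the spanning line is contained in at least $p^{n-1}$ distinct hyperplanes, giving a strong lower bound on the support of $T^{*}|_{A^k\times \F_p^n}$. On the other hand, a polynomial-method analysis in the style of Croot--Lev--Pach and Ellenberg--Gijswijt (leveraging $k+1\geq 2m+2$ and a careful per-coordinate degree count) bounds the slice rank of $T^{*}$ by $C\cdot \Gamma^{n}$ with $\Gamma<p$. Comparing the two forces $|A|\leq C'\cdot \Gamma^{n}$, contradicting our hypothesis.

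For the inductive step, I assume Theorem~\ref{theo-rank} for $r-1$. Given $k\geq 2m+r-1$, I first apply the inductive hypothesis to produce a solution of span dimension at least $r-1$; if it is already $\geq r$ we are done. Otherwise the span is some specific $(r-1)$-dimensional subspace $V$, and since $|V|=p^{r-1}$ is constant in $n$, the set $A\setminus V$ remains exponentially large and must contain no solution of span dimension $\geq r$ either. I then iterate the augmented-tensor argument, now with $r-1$ witness vectors $v_1,\dots,v_{r-1}$ replacing the single $v$, so that the augmented tensor detects solutions contained in the codimension-$(r-1)$ subspace $\{v_1,\dots,v_{r-1}\}^{\perp}$; the extra slack $k\geq 2m+r-1$ is precisely what is needed to keep the slice-rank bound of the form $C\cdot \Gamma^{n}$ with $\Gamma<p$. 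A probabilistic subspace-sampling argument (averaging over random complements of $V$ of appropriate dimension) is used to convert the support lower bound into the desired slice-rank lower bound on $A$.

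The main obstacle is designing the augmented tensor so that simultaneously (i) its slice rank admits a sharp upper bound via the polynomial method (this is where the condition $k\geq 2m+r-1$ enters, via a careful per-coordinate degree analysis), and (ii) the contradictory hypothesis forces a large number of $(x,v_1,\dots,v_{r-1})$-tuples in its support, producing an effective lower bound. Unlike Tao's proof of Theorem~\ref{theo-tao}, where the restricted tensor $T|_{A^k}$ is essentially diagonal when $A$ avoids non-trivial solutions, here the restricted tensor is genuinely non-diagonal: solutions can use many different $x_i$'s, and the witness vectors introduce extra modes of freedom. Handling this non-diagonal structure is the technical heart of the proof and corresponds to the new slice-rank technique announced in the introduction.
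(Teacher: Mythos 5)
There is a fundamental gap in your slice-rank argument that makes the proposed approach collapse: the augmented tensor $T^*$ has slice rank that is only \emph{polynomial} in $n$, and there is no way to extract an exponential lower bound on $|A|$ from it. Concretely, write $T^*$ as a polynomial in the $n$ coordinates of $v$ and the $kn$ coordinates of $x_1,\dots,x_k$. The only factors involving $v$ are the $k$ factors $1-\langle v,x_i\rangle^{p-1}$, each of degree $p-1$ in the $v$-coordinates. Hence the total degree of $T^*$ in the $v$-coordinates is at most $k(p-1)$, a constant independent of $n$. Grouping the monomials of $T^*$ by their $v$-part immediately writes $T^*$ as a sum of at most $\binom{n+k(p-1)}{k(p-1)}=O(n^{k(p-1)})$ functions of slice rank $1$ (each of the form $m_v(v)\cdot h_{m_v}(x_1,\dots,x_k)$). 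So the slice rank of $T^*$ is bounded by a polynomial in $n$ no matter what $A$ is, and the claimed comparison ``slice rank $\geq c\,|A|$ versus slice rank $\leq C\,\Gamma^n$'' cannot be set up: the upper bound is far too strong, and you have given no mechanism to get a matching lower bound. Indeed, you only observe that $T^*$ has large \emph{support} under the contradictory hypothesis, but a large support does not imply large slice rank (the all-ones tensor has slice rank $1$). A slice rank lower bound requires structural information about the support — a diagonal structure as in Lemma~\ref{lemma-slice-rank-diag-tensor}, or an antichain structure as in Lemma~\ref{lemma-sawin-tao} — and you do not identify any such structure on the support of $T^*$, which is genuinely non-diagonal with an extra ``free'' witness coordinate. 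The same problem recurs, amplified, in the inductive step with $r-1$ witness vectors, and the inductive step has a separate flaw: after finding a low-rank solution in an $(r-1)$-dimensional subspace $V$, deleting $V$ from $A$ removes only $O(1)$ points, so there is no bound on how many times one would need to iterate, and no way to make progress.

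The paper's actual route is quite different and is designed precisely to supply the missing slice-rank lower bound. Instead of augmenting the tensor with witness vectors, the paper defines a weight invariant $\omega(x_1,\dots,x_k)$ (Definitions~\ref{defi-admissible}--\ref{defi-weight-solution}) and proves Proposition~\ref{propo-weight} by induction on the weight, recovering Theorem~\ref{theo-rank} via $w=(r-1)(k+1)-1$. The crucial slice-rank input (Lemma~\ref{lemma-few-disjoint-solutions}) fixes an admissible set $I$ and the vectors $x_i$ for $i\in I$, then views $(\star)$ as a system in the remaining $k-|I|$ variables; the structure Lemma~\ref{lemma-structure-partition-exists} produces a partition $[k]\setminus I=J_1\cup\dots\cup J_t$ (indices grouped by the projected line $\spn(\proj_{\Fpn/U}(x_j))$) with each $|J_h|\geq 2$, and this is exactly the partition needed to apply the non-diagonal Sawin--Tao bound in Corollary~\ref{coro-slice-rank-partitioned tensor}. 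The hypothesis $k\geq 2m+1+\lfloor w/(k+1)\rfloor$ ensures the reduced system still has enough variables for that corollary to be non-trivial. Random subspace sampling then removes the weight-$w$ solutions to feed the induction. Your instinct that subspace sampling should play a role is correct, but it is used to pass to a lower-dimensional ambient space rather than to ``average over witness vectors,'' and the non-diagonal slice rank is applied to a $(k-|I|)$-tensor with the antichain structure already in hand, not to a tensor with appended witness slots.
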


Theorem \ref{theo-rank} can be viewed as a ``high-rank'' generalization of Tao's slice rank method result in Theorem \ref{theo-tao} (where one wants to find a solution $(x_1,\dots,x_k)\in A^k$ of high rank). Indeed, taking $r=2$ in Theorem \ref{theo-rank} implies Theorem \ref{theo-tao}. Also note that unlike in Theorem \ref{theo-distinct}, in Theorem \ref{theo-rank} we do not require any genericity assumption on the matrix $(a_{j,i})_{j,i}$.

\textit{Organization.} In the next section we will deduce Theorem \ref{theo-distinct} from Theorem \ref{theo-rank}. Section \ref{sect-background} contains some background on the slice rank polynomial method (including a proof of Theorem \ref{theo-tao}), and discusses the new way in which it will be used in the proof of Theorem \ref{theo-rank}. The proof of Theorem \ref{theo-rank} is in Section \ref{sect-proof-rank}. We finish with some concluding remarks in Section \ref{sect-concluding-remarks}.

\textit{Notation.} As usual, we write $[k]=\{1,\dots,k\}$.  For a subset $I\su [k]$, an \emph{$I$-tuple} of vectors in $\Fpn$ is a tuple $(x_i\mid i\in I)$ indexed by the set $I$  (with $x_i\in \Fpn$ for all $i\in I$). For example, a $[k]$-tuple is simply a $k$-tuple $(x_1,\dots,x_k)$ and a $\{1,2,4\}$-tuple is a tuple $(x_1, x_2, x_4)$.

For a vector space $V$ and a subspace $U$, we can consider the quotient space $V/U$. We denote the projection of a vector $x\in V$ onto this quotient space by $\proj_{V/U}(x)$. Note that $\proj_{V/U}(x)$ is a vector in $V/U$ and it is non-zero if and only if $x\not\in U$.

\section{Proof of Theorem \ref{theo-distinct} assuming Theorem \ref{theo-rank}}
\label{sect-proof-distinct}

We deduce Theorem \ref{theo-distinct} from Theorem \ref{theo-rank} using an inductive argument. More precisely, we will show the following theorem by induction on $\l$ (where the base case $\l=m+1$ will be obtained from Theorem \ref{theo-rank}, and the final case $\l=k$ will give the desired statement in Theorem \ref{theo-distinct}).

\begin{theorem}\label{theo-distinct-induction}
For any fixed integers $m\geq 1$ and $k\geq 3m$ and $m+1\leq \l\leq k$ as well as a fixed prime $p$, there exist constants $C\geq 1$ and $0<c\leq 1$ such that the following holds: Let $a_{j,i}\in \Fp$ for $j=1,\dots,m$ and $i=1,\dots, k$ be coefficients with $a_{j,1}+\dots+a_{j,k}=0$ for $j=1,\dots,m$ such that every $m\times m$ minor of the $m\times k$ matrix $(a_{j,i})_{j,i}$ is non-singular. Then for any non-negative integer $n$ and any subset $A\su \Fpn$ of size $|A|> C\cdot p^{(1-c)n}$, the system  ($\star$) has a solution $(x_1,\dots,x_k)\in A^k$ such that $\dim\spn(x_1,\dots,x_k)\geq m+1$ and such that among  $x_1,\dots,x_k\in A$ there are at least $\l$ distinct vectors.
\end{theorem}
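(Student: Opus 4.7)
I would prove Theorem \ref{theo-distinct-induction} by induction on $\l$, starting from the base case $\l=m+1$ supplied by Theorem \ref{theo-rank} and climbing up to $\l=k$ (which reproduces Theorem \ref{theo-distinct}).

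\textbf{Base case $\l=m+1$.} Apply Theorem \ref{theo-rank} with $r=m+1$; its hypothesis $k\geq 2m+r-1=3m$ is exactly the standing assumption of the theorem. For $|A|$ exceeding the Theorem \ref{theo-rank} threshold, we obtain a solution $(x_1,\dots,x_k)\in A^k$ with $\dim\spn(x_1,\dots,x_k)\geq m+1$, and any $m+1$ linearly independent vectors chosen from among the $x_i$'s are automatically pairwise distinct, giving at least $m+1$ distinct entries. One then chooses $C,c$ for this step so that $p^{1-c}\geq \Gamma^{\mathrm{rank}}_{p,m,k,m+1}$ and $C\geq C^{\mathrm{rank}}_{p,m,k,m+1}$.

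\textbf{Inductive step.} Assume the statement for some $m+1\leq \l<k$ and derive it for $\l+1$. Pick constants $C$ larger and $c$ smaller than those of the previous step, so that the hypothesis at step $\l+1$ entails the hypothesis at step $\l$; the inductive hypothesis then furnishes a solution $(x_1,\dots,x_k)\in A^k$ to $(\star)$ with $\dim\spn\geq m+1$ and at least $\l$ distinct coordinates. If there are already $\geq \l+1$ distinct coordinates, we stop. Otherwise exactly $\l$ distinct values $v_1,\dots,v_\l\in A$ appear, inducing a partition $P=(I_1,\dots,I_\l)$ of $[k]$ with $x_i=v_s$ for $i\in I_s$. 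Some block $I_{s^*}$ has size $\geq 2$ since $\l<k$; I fix any $i_0\in I_{s^*}$ and let $P'$ refine $P$ by isolating $\{i_0\}$ into its own block. My goal is to find a solution to the reduced system $(\star_{P'})$ in $A^{\l+1}$ whose $\l+1$ entries are all distinct and span a subspace of dimension $\geq m+1$. The reduced coefficient matrix $(\tilde a_{j,s})$ still has row sums zero (each row sum equals the corresponding row sum of $(a_{j,i})$), so Theorem \ref{theo-rank} applies to it and produces solutions of high rank in $A^{\l+1}$; depending on whether $\l+1\geq 3m$ or not, I would then either re-apply the present theorem to the reduced system (with $k$ replaced by $\l+1$) or combine Theorem \ref{theo-rank} with a parameter-sampling argument that forces the new value to avoid $\{v_1,\dots,v_\l\}$.

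\textbf{Main obstacle.} The principal difficulty is guaranteeing that the newly introduced coordinate value is distinct from \emph{all} of $v_1,\dots,v_\l$, not merely from $v_{s^*}$. Extensions where the new value accidentally coincides with some $v_s$ with $s\neq s^*$ collapse back to $\leq \l$ distinct values and are unusable; I would discard them via a union bound over $s\in[\l]$ combined with a counting argument that bounds the number of solutions of $(\star)$ with at most $\l-1$ distinct values (itself controlled by re-invoking the inductive hypothesis at a smaller $\l$, or by a direct appeal to Theorem \ref{theo-rank} on an auxiliary reduced system). A secondary subtlety is that the non-singularity assumption on $m\times m$ minors of the original matrix $(a_{j,i})$ does not automatically transfer to every $m\times m$ minor of the reduced matrix obtained from $P'$ — the reduced minor is a sum of original minors by multilinearity of the determinant and may cancel — so one either averages over many choices of $i_0$ and $P'$ to land on a genericity-preserving refinement, or argues that the row-sum-zero property is all one actually needs in the subsequent step. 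Tracking the constants $C$ and $c$ through the $k-m$ induction steps (with $c$ shrinking but remaining positive) is routine since only finitely many steps occur.
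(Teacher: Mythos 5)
Your base case matches the paper's exactly. Your inductive step, however, takes a fundamentally different route and has genuine gaps, two of which you name but do not resolve. First, you collapse the original $m\times k$ system to a reduced $m\times(\l+1)$ system by merging equal coordinates, and the non-singularity of $m\times m$ minors need not transfer: by multilinearity a reduced minor is a signed sum of original minors and can vanish, so the reduced system may in effect contain an equation equivalent to $x_s-x_{s'}=0$, which makes solutions with distinct entries impossible regardless of row sums. Your suggested fallback, that the row-sum-zero property ``is all one actually needs,'' is therefore untenable. Second, even granting genericity of the reduced matrix, the reduced system has only $\l+1$ variables, which can be smaller than $3m$ (e.g.\ when $\l=m+1$ and $m\geq 2$), so ``re-applying the present theorem'' to it is not available; and since the partition $P'$ is derived from whichever solution the induction hypothesis happened to produce, you are working against a moving target rather than with a single fixed linear system. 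Finally, the union bound meant to keep the new value away from all of $v_1,\dots,v_\l$ has no quantitative content as written, and it is precisely there that all the work would have to happen.

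The paper instead keeps the original system fixed throughout and argues by contradiction. Assuming $A$ has no solution with $\dim\spn\geq m+1$ and $\geq\l$ distinct entries, it introduces $I$-interesting $(m+1)$-tuples (Definition~\ref{defi-interesting}), uses the non-singularity of the original $m\times m$ minors to show each $I$ supports at most $k^2\cdot p^{mn}$ such tuples (Lemma~\ref{lemma-number-interesting-tuples}), and then samples a random subspace $V$ of dimension roughly $(1-c)n/m$. Inside $V$ the expected count of interesting tuples is dwarfed by the expected size of $A\cap V$, so deleting one vector per interesting tuple leaves a set $A^*\su A\cap V$ that is free of interesting tuples yet still dense enough in $V\cong\Fp^d$ to trigger the induction hypothesis at $\l-1$. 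The resulting solution has exactly $\l-1$ distinct entries, hence by Claim~\ref{claim-contains-interesting} contains an $I$-interesting tuple inside $A^*$ --- a contradiction. Crucially, every appeal to minor non-singularity is made against the \emph{original} coefficient matrix, so no transfer of genericity to a reduced system is ever required, and the ``avoid all of $v_1,\dots,v_\l$'' difficulty is handled globally by the deletion argument rather than coordinate by coordinate.
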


Note that Theorem \ref{theo-distinct} follows from Theorem \ref{theo-distinct-induction} for $\l=k$. Indeed, taking $C_{p,m,k}=C$ and $\Gamma_{p,m,k}^*=p^{1-c}<p$ (for the constants $C$ and $c$ obtained in Theorem \ref{theo-distinct-induction} for $\l=k$), we can see that every subset $A\su \Fpn$ of size $|A|> C_{p,m,k}\cdot (\Gamma_{p,m,k}^*)^n=C\cdot p^{(1-c)n}$ contains a solution $(x_1,\dots,x_k)\in A^k$ to the system $(\star)$ such that among  $x_1,\dots,x_k\in A$ there are at least $k$ distinct vectors (meaning that the vectors $x_1,\dots,x_k\in A$ are all distinct).

In the proof of Theorem \ref{theo-distinct-induction} we will use a probabilistic subspace sampling argument. This will require the following easy lemma.

\begin{lemma}\label{lemma-subspace-probability}
Let $n>0$ and $0\leq d\leq n$ be integers, and let $y_1,\dots,y_s\in \Fpn$ be linearly independent vectors in $\Fpn$. Then for a uniformly random $d$-dimensional subspace $V\su \Fpn$, we have
\[\PP[y_1,\dots,y_s\in V]=\frac{p^d-1}{p^n-1}\cdot \frac{p^d-p}{p^n-p}\dotsm \frac{p^d-p^{s-1}}{p^n-p^{s-1}}\leq \left(\frac{p^d}{p^n}\right)^s\]
\end{lemma}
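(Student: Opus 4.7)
The plan is to compute $\PP[y_1,\ldots,y_s \in V]$ via a telescoping product of conditional probabilities $\PP[y_t \in V \mid y_1,\ldots,y_{t-1} \in V]$ for $t = 1,\ldots,s$, each of which I will reduce to a single-vector computation in a quotient space.

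For the base case (a single fixed nonzero vector $y \in \Fpn$), I would first establish that $\PP[y \in V] = (p^d-1)/(p^n-1)$ by a symmetry argument: the $\mathrm{GL}_n(\Fp)$-action is transitive on nonzero vectors, so each of the $p^n-1$ nonzero vectors of $\Fpn$ is equally likely to lie in $V$, and any $d$-dimensional subspace contains exactly $p^d-1$ of them. For the inductive step, I condition on $\{y_1,\ldots,y_{t-1}\in V\}$: then $V$ is uniform over the $d$-dimensional subspaces of $\Fpn$ containing $W := \spn(y_1,\ldots,y_{t-1})$, and the bijection $V \mapsto V/W$ between such subspaces and the $(d-t+1)$-dimensional subspaces of the $(n-t+1)$-dimensional quotient $\Fpn/W$ pushes the uniform distribution to the uniform distribution. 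Since $y_1,\ldots,y_t$ are linearly independent, $\proj_{\Fpn/W}(y_t)$ is a nonzero vector in the quotient, so applying the base case inside $\Fpn/W$ yields
\[
\PP[y_t \in V \mid y_1,\ldots,y_{t-1} \in V] \;=\; \frac{p^{d-t+1}-1}{p^{n-t+1}-1} \;=\; \frac{p^d - p^{t-1}}{p^n - p^{t-1}},
\]
where the second equality comes from multiplying numerator and denominator by $p^{t-1}$. Taking the product over $t=1,\ldots,s$ gives the claimed equality.

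For the inequality, it suffices to show each factor is at most $p^d/p^n$. Cross-multiplying, $(p^d-p^{t-1})/(p^n-p^{t-1}) \leq p^d/p^n$ is equivalent to $p^{d+t-1} \leq p^{n+t-1}$, which holds since $d \leq n$; multiplying these $s$ inequalities gives $(p^d/p^n)^s$. I do not anticipate any serious obstacle — this is a routine Gaussian-binomial calculation. The only mild edge case is $s > d$, where the left-hand side vanishes (the $y_i$ cannot fit into a $d$-dimensional subspace) and one of the factors $p^d - p^{t-1}$ is zero at $t=d+1 \leq s$, so both sides of the equality are zero and the inequality is trivial.
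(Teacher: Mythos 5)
Your proof is correct and follows essentially the same approach as the paper's: both compute the probability as a telescoping product of conditional probabilities, establish the base case $\PP[y_1\in V]=(p^d-1)/(p^n-1)$ by counting nonzero vectors, and then derive each successive conditional factor. The only cosmetic difference is that you obtain the conditional probability by passing to the quotient $\Fpn/W$ and reapplying the base case there, whereas the paper counts vectors outside $W$ directly inside $\Fpn$; both yield the same factor $(p^d-p^{t-1})/(p^n-p^{t-1})$.
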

\begin{proof}
Let us first check  the inequality between the term in the middle and the term on the right-hand side. If $s>d$, then the middle term is zero and therefore the inequality is true. If $s\leq d$, then the $s$ factors of the middle term are all positive and each of them is at most $(p^d-1)/(p^n-1)\leq p^d/p^n$, which also implies the inequality.

Let us now prove by induction on $s$ that the probability on the left-hand side equals the term in the middle. For the base case $s=1$, note that $y_1\neq 0$ and that each of the $p^n-1$ non-zero vectors in $\Fpn$ is equally likely to be contained in $V$. Since $V$ always contains exactly $p^d-1$ non-zero vectors, we can conclude that $\PP[y_1\in V]=(p^d-1)/(p^n-1)$ as desired.

Now suppose that $s\geq 2$ and that we have already proved
\[\PP[y_1,\dots,y_{s-1}\in V]=\frac{p^d-1}{p^n-1}\cdot \frac{p^d-p}{p^n-p}\dotsm \frac{p^d-p^{s-2}}{p^n-p^{s-2}}.\]
Then it suffices to show that $\PP[y_s\in V\mid y_1,\dots,y_{s-1}\in V]=(p^d-p^{s-1})/(p^n-p^{s-1})$. Note that by the assumption on $y_1,\dots,y_s$ being linearly independent, the vector $y_s$ does not lie in the span of $y_1,\dots,y_{s-1}$. Furthermore, this span is $(s-1)$-dimensional and therefore consists of exactly $p^{s-1}$ vectors. Hence there are exactly $p^n-p^{s-1}$ vectors outside $\spn(y_1,\dots,y_s)$ and each of these vectors is equally likely to be contained in $V$ when conditioning on the event $y_1,\dots,y_{s-1}\in V$. Since under this conditioning, $V$ always contains exactly $p^d-p^{s-1}$ vectors outside $\spn(y_1,\dots,y_s)$, we can conclude that $\PP[y_s\in V\mid y_1,\dots,y_{s-1}\in V]=(p^d-p^{s-1})/(p^n-p^{s-1})$ as desired.\end{proof}

Let us now prove Theorem \ref{theo-distinct-induction} by induction on $\l$, assuming that Theorem \ref{theo-rank} is true.

\begin{proof}[Proof of Theorem \ref{theo-distinct-induction}]
The base case $\l=m+1$ follows easily from Theorem \ref{theo-rank} for $r=m+1$. Indeed, we have $k\geq 3m=2m+r-1$, so there are constants $C_{p,m,k,m+1}^{\textnormal{rank}}\geq 1$ and $1\leq \Gamma_{p,m,k,m+1}^{\textnormal{rank}}<p$ such that the statement in Theorem \ref{theo-rank} holds. Now, let $C=C_{p,m,k,m+1}^{\textnormal{rank}}$ and let $0<c\leq 1$ be such that $p^{1-c}=\Gamma_{p,m,k,m+1}^{\textnormal{rank}}$. Then for any subset any subset $A\su \Fpn$ of size $|A|> C\cdot p^{(1-c)n}=C_{p,m,k,m+1}^{\textnormal{rank}}\cdot (\Gamma_{p,m,k,m+1}^{\textnormal{rank}})^n$, the system  ($\star$) has a solution $(x_1,\dots,x_k)\in A^k$ such that
$\dim\spn(x_1,\dots,x_k)\geq m+1$. Note that the condition $\dim\spn(x_1,\dots,x_k)\geq m+1$ automatically implies that there must be  at least $\l=m+1$ distinct vectors among  $x_1,\dots,x_k$. This proves Theorem \ref{theo-distinct-induction} for $\l=m+1$.

Now let us assume that $m+2\leq \l\leq k$ and that Theorem \ref{theo-distinct-induction} holds for $\l-1$ with constants $C'\geq 1$ and $0<c'\leq 1$. We will prove that then Theorem \ref{theo-distinct-induction} also holds for $\l$. Let us take
\[c=\frac{1}{(m/c')+1}\]
and note that
\begin{equation}\label{eq-new-old-c-distinct}
1-(m+1)c=\frac{(m/c')+1-(m+1)}{(m/c')+1}=m\cdot \frac{(1/c')-1}{(m/c')+1}=(1-c')\cdot \frac{(m/c')}{(m/c')+1}=(1-c')(1-c).
\end{equation}

Our goal is to show that the statement in Theorem \ref{theo-distinct-induction} holds for some constant $C\geq 1$ only depending on $m$, $k$, $\ell$ and $p$. By making the constant $C$ sufficiently large we may assume that $n\geq m/(1-c)$. We may furthermore assume that $0\not\in A$ (otherwise we can delete the zero-vector from $A$ and account for that by increasing the constant $C$ by $1$).

Hence it suffices to prove that for any $n\geq  m/(1-c)$ and any subset $A\su \Fpn$ of size $|A|> 2p(C'+k^22^k)\cdot p^{(1-c)n}$ with $0\not\in A$, the system  ($\star$) has a solution $(x_1,\dots,x_k)\in A^k$ satisfying the conditions in Theorem \ref{theo-distinct-induction}.

So let us assume for contradiction that $n\geq  m/(1-c)$ and that $A\su \Fpn$ is a subset of size
\[|A|> 2p(C'+k^22^k)\cdot p^{(1-c)n}\]
with $0\not\in A$, in which we cannot find a solution $(x_1,\dots,x_k)\in A^k$ to the system  ($\star$) with $\dim\spn(x_1,\dots,x_k)\geq m+1$ and such that among  $x_1,\dots,x_k\in A$ there are at least $\l$ distinct vectors.

The following definition of an $I$-interesting $I$-tuple plays a crucial role for the proof. Roughly speaking, our strategy will be to sample a random  subspace $V\su \Fpn$ of a suitably chosen dimension and to find a fairly large subset $A^*\su A\cap V$ which does not contain any $I$-interesting $I$-tuple. On the other hand, given that $A^*\su V$ is a large subset of the vector space $V$, by the induction hypothesis for $\l-1$, the set $A^*$ needs to contain a solution $(x_1,\dots,x_k)\in (A^*)^k$ to the system $(\star)$ with $\dim\spn(x_1,\dots,x_k)\geq m+1$ and such that among  $x_1,\dots,x_k\in A$ there are at least $\l-1$ distinct vectors. We will see that having such a solution $(x_1,\dots,x_k)\in (A^*)^k$ will actually force the set $A^*$ to contain some $I$-interesting $I$-tuple. This contradiction will finish the proof of the induction step.

\begin{definition}\label{defi-interesting}
For a subset $I\su [k]$ of size $|I|=m+1$, let us say that an $I$-tuple $(x_i\mid i\in I)$ of vectors $x_i\in A$ for $i\in I$ is \emph{$I$-interesting} if the vectors $x_i$ for $i\in I$ are linearly independent and the $I$-tuple $(x_i\mid i\in I)$ can be extended to a solution $(x_1,\dots,x_k)\in A^k$ to the system ($\star$) such that among the $k-m-1$ vectors $x_j$ for $j\in [k]\sm I$ there are at least $\l-m-1$ distinct vectors.
\end{definition}

The following claim states, roughly speaking, that every solution $(x_1,\dots,x_k)\in A^k$ satisfying the conditions in the induction hypothesis for $\l-1$ must contain an $I$-interesting $I$-tuple.

\begin{claim}\label{claim-contains-interesting}
Let $(x_1,\dots,x_k)\in A^k$ be a solution to the system $(\star)$ with $\dim\spn(x_1,\dots,x_k)\geq m+1$ and such that among  $x_1,\dots,x_k\in A$ there are exactly $\l-1$ distinct vectors. Then there is a subset $I\su [k]$ of size $|I|=m+1$ such that the $I$-tuple $(x_i\mid i\in I)$ is $I$-interesting.
\end{claim}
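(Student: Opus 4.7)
The plan is to find a subset $I\su [k]$ of size $m+1$ satisfying two conditions: (a) the vectors $x_i$ for $i\in I$ are linearly independent, and (b) among the vectors $x_j$ for $j\in [k]\sm I$ there are at least $\l-m-1$ distinct vectors. Given (a), the $m+1$ vectors $x_i$ for $i\in I$ are automatically pairwise distinct and all non-zero (using $0\not\in A$). Since among $x_1,\dots,x_k$ there are exactly $\l-1$ distinct vectors in total, condition (b) is then equivalent to the assertion that at least one of the $m+1$ vectors $x_i$, $i\in I$, also occurs as $x_j$ for some $j\in [k]\sm I$.

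Using the hypothesis $\dim\spn(x_1,\dots,x_k)\geq m+1$, I would first choose an initial set $I_0\su [k]$ of size $m+1$ for which the vectors $\{x_i : i\in I_0\}$ are linearly independent. If some $j\in [k]\sm I_0$ has $x_j$ equal to one of the $x_i$, $i\in I_0$, then $I=I_0$ already satisfies both (a) and (b) and we are done.

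Otherwise the $m+1$ distinct values $\{x_i : i\in I_0\}$ are disjoint from the multiset of values $\{x_j : j\in [k]\sm I_0\}$, so the latter draws from at most $(\l-1)-(m+1)=\l-m-2$ distinct values. Since $\l\leq k$, we have $|[k]\sm I_0|=k-m-1\geq \l-m-1>\l-m-2$, and the pigeonhole principle produces two distinct indices $j_0,j_1\in [k]\sm I_0$ with $x_{j_0}=x_{j_1}$.

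To finish, I would swap some appropriately chosen $i_0\in I_0$ out in favor of $j_0$. Since $x_{j_0}\neq 0$, either $x_{j_0}\not\in\spn(x_i:i\in I_0)$, in which case any $i_0\in I_0$ works, or $x_{j_0}=\sum_{i\in I_0}c_ix_i$ with some $c_{i_0}\neq 0$; picking such an $i_0$ guarantees that $x_{j_0}$ together with $\{x_i:i\in I_0\sm\{i_0\}\}$ remains linearly independent. Setting $I=(I_0\sm\{i_0\})\cup\{j_0\}$ then gives (a), while $j_1\in [k]\sm I$ witnesses that the value $x_{j_0}=x_{j_1}$ occurs both inside and outside $I$, yielding (b). The only mildly delicate point is the pigeonhole count, which depends essentially on $\l\leq k$; the rest reduces to a one-line linear-algebra observation.
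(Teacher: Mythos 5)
Your proof is correct and rests on the same core idea as the paper's: there must be a repeated pair $x_a=x_b$ (since $\ell-1\leq k-1$), and the point is to build a linearly independent set $I$ of size $m+1$ that contains exactly one index of such a pair, so that the other index lies in $[k]\setminus I$ and certifies condition (b). The only difference is the order of operations: the paper first fixes the repeated pair and then extends $\{x_a\}$ to a linearly independent set of size $m+1$ (using $\dim\spn(x_1,\dots,x_k)\geq m+1$), whereas you first pick an arbitrary linearly independent $I_0$ of size $m+1$ and, in the failure case, locate a repeated pair in $[k]\setminus I_0$ and perform an exchange; your route is a bit longer (it needs the case split, the pigeonhole count, and the exchange-lemma step) but arrives at the same place, and all the steps you flagged as delicate check out.
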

\begin{proof}
Since $\l-1\leq k-1$, at least one vector must be repeated among $x_1,\dots,x_k$. So let $a,b\in [k]$ be distinct indices such that $x_a=x_b$. Note that $x_a\neq 0$ by our assumption that $0\not\in A$. Hence, since $\dim\spn(x_1,\dots,x_k)\geq m+1$, we can extend the single vector $x_a$ to a list of $m+1$ linearly independent vectors chosen among $x_1,\dots,x_k$. In other words, we can find a subset $I\su [k]$ of size $|I|=m+1$ with $a\in I$ such that the vectors $x_i$ for $i\in I$ are linearly independent (and therefore in particular distinct). It now suffices to check that among the $k-m-1$ vectors $x_j$ for $j\in [k]\sm I$ there are at least $\l-m-1$ distinct vectors. Since $x_a=x_b$ and the vectors $x_i$ for $i\in I$ are distinct, we have $b\in [k]\sm I$. Recall that among $x_1,\dots,x_k\in A$ there are exactly $\l-1$ distinct vectors. When omitting the $m+1$ vectors $x_i$ with $i\in I$, at most $m$ of these $\l-1$ distinct vectors disappear (since the vector $x_a=x_b$ remains even though it is deleted once, and at most $m$ other vectors get deleted). Hence there are indeed at least $\l-m-1$ distinct vectors among the vectors $x_j$ for $j\in [k]\sm I$.
\end{proof}

On the other hand, our assumption on the set $A$ implies the following structural property for certain solutions $(x_1,\dots,x_k)\in A^k$ to ($\star$) containing an $I$-interesting $I$-tuple.

\begin{claim}\label{claim-structure-interesting}
Let $I\su [k]$ be a subset of size $|I|=m+1$, and suppose that $(x_1,\dots,x_k)\in A^k$ is a solution to  the system ($\star$) such that the $I$-tuple $(x_i\mid i\in I)$ is $I$-interesting and such that there are at least $\l-m-1$ distinct vectors among $x_j$ for $j\in [k]\sm I$. Then at least one of the vectors $x_i$ for $i\in I$ also occurs among the vectors $x_j$ for $j\in [k]\sm I$.
\end{claim}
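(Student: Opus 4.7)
The plan is to prove Claim \ref{claim-structure-interesting} by a short contradiction argument against our standing assumption on $A$. Recall that we assumed, for the sake of contradiction, that $A$ contains no solution $(x_1,\dots,x_k)\in A^k$ to ($\star$) with $\dim\spn(x_1,\dots,x_k)\geq m+1$ and with at least $\l$ distinct vectors among $x_1,\dots,x_k$. So it suffices to show that if the conclusion of the claim fails, then $(x_1,\dots,x_k)$ itself is such a forbidden solution.

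So suppose for contradiction that none of the vectors $x_i$ for $i\in I$ occurs among the vectors $x_j$ for $j\in [k]\sm I$. First, since $(x_i\mid i\in I)$ is $I$-interesting, the $m+1$ vectors $x_i$ with $i\in I$ are linearly independent, and in particular distinct. This already gives $\dim\spn(x_1,\dots,x_k)\geq \dim\spn(x_i\mid i\in I)=m+1$.

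Next I would count distinct vectors: among the vectors $x_j$ for $j\in [k]\sm I$ there are (by hypothesis) at least $\l-m-1$ distinct vectors, and by our contradiction hypothesis none of these equals any $x_i$ with $i\in I$. Hence the total number of distinct vectors among $x_1,\dots,x_k$ is at least $(m+1)+(\l-m-1)=\l$. Combined with $\dim\spn(x_1,\dots,x_k)\geq m+1$ and the fact that $(x_1,\dots,x_k)$ is a solution to ($\star$), this contradicts our standing assumption that no such solution exists in $A^k$.

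There is no real obstacle here; the claim is a bookkeeping statement whose entire content is that the two groups $\{x_i:i\in I\}$ and $\{x_j:j\in[k]\sm I\}$ must overlap in value, because otherwise the distinct-vector count already reaches $\l$ and the span condition is automatically met by the $I$-part, which is exactly what we have forbidden.
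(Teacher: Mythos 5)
Your proof is correct and follows essentially the same argument as the paper: both observe that the $I$-interesting condition forces linear independence (hence $m+1$ distinct vectors among the $I$-part and $\dim\spn\geq m+1$), and that disjointness of the two groups would yield at least $\ell$ distinct vectors, contradicting the standing assumption on $A$. The only cosmetic difference is that you phrase the final step as an explicit contradiction while the paper states it contrapositively.
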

\begin{proof}
By Definition \ref{defi-interesting}, the vectors $x_i$ for $i\in I$ must be linearly independent (and therefore in particular distinct). Now, $\dim \spn (x_1,\dots,x_k)\geq \dim \spn (x_i\mid i\in I)=m+1$ and by our assumption on the set $A$ this means that there cannot be $\l$ distinct vectors among $x_1,\dots,x_k$. Recall that there are at least $\l-m-1$ distinct vectors among $x_j$ for $j\in [k]\sm I$ and exactly $m+1$ distinct vectors among $x_i$ for $i\in I$. If these two lists of vectors were disjoint, then we would obtain at least $\l$ distinct vectors among $x_1,\dots,x_k$. Hence some vector must occur both among $x_i$ for $i\in I$ and among $x_j$ for $j\in [k]\sm I$.
\end{proof}

Using Claim \ref{claim-structure-interesting}, we can show the following upper bound on the number of $I$-interesting $I$-tuples for any subset $I$. This bound will enable us to perform the desired subspace sampling argument, obtaining a subset $A^*\su A$ without any $I$-interesting $I$-tuples.

\begin{lemma}\label{lemma-number-interesting-tuples}
For each subset $I\su [k]$ of size $|I|=m+1$, there are at most $k^2\cdot p^{mn}$ different $I$-interesting $I$-tuples $(x_i\mid i\in I)$.
\end{lemma}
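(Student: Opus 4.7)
The plan is to bound the count by summing over possible witnesses from Claim~\ref{claim-structure-interesting}. For each $I$-interesting $I$-tuple $(x_i)_{i\in I}$, Claim~\ref{claim-structure-interesting} provides an extension $(x_1,\dots,x_k)\in A^k$ of $(\star)$ in which some $x_{i_0}$ with $i_0\in I$ coincides with some $x_{j_0}$ with $j_0\in[k]\sm I$. Associating to each $I$-interesting tuple one such witness pair $(i_0,j_0)$, and noting that there are $(m+1)(k-m-1)<k^2$ such pairs, it suffices to show that for each fixed pair $(i_0,j_0)$, the number of $I$-tuples witnessed by this pair is at most $p^{mn}$.

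For fixed $(i_0,j_0)$, I would substitute $x_{j_0}:=x_{i_0}$ into $(\star)$ to obtain the reduced system of $m$ equations in the $k-1$ variables $(x_i)_{i\in[k]\sm\{j_0\}}$, whose coefficients are $a'_{j,i}=a_{j,i}$ for $i\neq i_0$ and $a'_{j,i_0}=a_{j,i_0}+a_{j,j_0}$. Setting $J=I\sm\{i_0\}$, we have $J\su[k]\sm\{i_0,j_0\}$ with $|J|=m$, so the reduced minor on $J$ coincides with the original minor $(a_{j,i})_{j,i\in J}$ and is therefore non-singular by the genericity hypothesis. Hence $(x_i)_{i\in J}$ is uniquely determined as a fixed linear function of the remaining variables $x_{i_0}$ and $(x_j)_{j\in[k]\sm I\sm\{j_0\}}$, explicitly
\[
(x_i)_{i\in J}=-M_J^{-1}\Bigl[(a_{j,i_0}+a_{j,j_0})_j\,x_{i_0}+\sum_{j\in[k]\sm I\sm\{j_0\}}(a_{j,\cdot})_j\,x_j\Bigr],
\]
where $M_J=(a_{j,i})_{j\in[m],\,i\in J}$. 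The full $I$-tuple $(x_{i_0},(x_i)_{i\in J})$ is thus obtained as the image of the input $(x_{i_0},(x_j)_{j\in[k]\sm I\sm\{j_0\}})\in A^{k-m-1}$ under an explicit $\mathbb{F}_p$-linear map $\Psi:(\Fpn)^{k-m-1}\to(\Fpn)^I$.

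The final step is to bound the image of $\Psi$ on the admissible inputs by $p^{mn}$. The key rank input comes from the $m\times(k-m-2)$ submatrix $(a_{j,i})_{j\in[m],\,i\in[k]\sm I\sm\{j_0\}}$, which has rank exactly $m$ by genericity (since $k-m-2\geq m$ for $k\geq 2m+2$, which holds as $k\geq 3m$). Combined with a careful accounting of how the $x_{i_0}$-dependence enters both the first output component and (via $(a_{j,i_0}+a_{j,j_0})$) the $J$-component, one argues that the image of $\Psi$ restricted to admissible inputs spans an $\mathbb{F}_p$-subspace of $(\Fpn)^I$ of dimension at most $mn$, yielding at most $p^{mn}$ possible $I$-tuples for this pair $(i_0,j_0)$.

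The main obstacle will be precisely establishing this rank bound in Step~4. A naive computation treating $x_{i_0}$ and the complementary $(x_j)$'s as independent inputs of the linear map gives only rank $m+1$ in $\Fpn$-terms, suggesting the weaker bound $|A|\cdot p^{mn}$ rather than the required $p^{mn}$. Achieving the tight bound requires exploiting a linear relation on the image---either through a change of variables that absorbs the $x_{i_0}$-contribution into the $m$-dimensional column span of $(a_{j,i})_{i\in[k]\sm I\sm\{j_0\}}$, or through a rewriting using the affine property $\sum_i a_{j,i}=0$ (which the reduced system inherits, since $\sum_{i\neq j_0}a'_{j,i}=\sum_i a_{j,i}=0$). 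Either way, the genericity of every $m\times m$ minor is used essentially to justify the linear-algebraic reduction.
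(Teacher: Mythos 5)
Your decomposition inverts the paper's: you fix the witness pair $(i_0,j_0)$ first (of which there are $\leq k^2$) and then try to bound each class by $p^{mn}$, whereas the paper fixes the $m$ values $b_t=\sum_{i\in I}a_{t,i}x_i$ (of which there are $p^{mn}$) and then bounds each fiber by $k^2$. The difficulty you flag at the end is not a technical wrinkle but a genuine gap, and I don't think the suggested fixes can close it. After substituting $x_{j_0}:=x_{i_0}$, the map $\Psi:(x_{i_0},(x_j)_{j\in[k]\sm I\sm\{j_0\}})\mapsto (x_{i_0},(x_i)_{i\in J})$ is $\Fpn$-linear and its first output component is \emph{equal} to the first input; hence the projection of the image onto the $i_0$-slot is all of $\Fpn$. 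Together with the fact that $(x_i)_{i\in J}$ still depends on the $k-m-2\geq m$ free inputs $x_j$ (whose coefficient block has full rank $m$ by genericity), the image of $\Psi$ is generically \emph{all} of $(\Fpn)^I$, i.e.\ has $\Fp$-dimension $(m+1)n$, not $mn$. The affine relation $\sum_i a'_{j,i}=0$ only tells you the all-equal vector is in the kernel of the coefficient map; it does not shrink the image, because $x_{i_0}$ still appears untouched in the output. So the per-pair count can be as large as $\Theta(|A|\cdot p^{mn})$ in your framework, and there is no obvious source for the extra cancellation.

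What the paper's proof exploits, and what is missing here, is that within a fiber $\{(x_i)_{i\in I}:\sum_i a_{t,i}x_i=b_t\text{ for all }t\}$ one can choose a \emph{single} fixed complement $(x_j)_{j\in[k]\sm I}$ with $\sum_j a_{t,j}x_j=-b_t$ that works as an extension for \emph{every} tuple in the fiber simultaneously. That shared complement is what makes Claim~\ref{claim-structure-interesting} bite: every tuple in the fiber must collide with one of at most $k$ fixed vectors at one of at most $k$ positions, so a pigeonhole over the $\leq k^2$ (position, fixed vector) pairs suffices, once one knows (from the nonsingular minor on $I\sm\{h\}$) that distinct tuples in the fiber disagree in every coordinate. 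In your setup the complement varies from tuple to tuple, so no finite set of vectors is forced to meet them all, and the $k^2$-pigeonhole has nowhere to land. If you want to salvage the $(i_0,j_0)$-first decomposition, you would need a substitute for the shared complement, and I don't see one; I'd recommend switching to the fiber-first decomposition.
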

\begin{proof}
Let us fix a subset $I\su [k]$ of size $|I|=m+1$. For each $I$-interesting $I$-tuples $(x_i\mid i\in I)$, let us consider the sums $\sum_{i\in I}a_{t,i}x_i$ for $t=1,\dots,m$ (in other words, we consider the contributions of the vectors $x_i$ with $i\in I$ to the left-hand sides of the equations in the system ($\star$)). It suffices to prove that for any choice of $b_1,\dots,b_m\in \Fpn$ there can be at most $k^2$ different $I$-interesting $I$-tuples $(x_i\mid i\in I)$ with $\sum_{i\in I}a_{t,i}x_i=b_t$ for $t=1,\dots,m$. Indeed, summing over all $(p^n)^m=p^{mn}$ choices for $b_1,\dots,b_m$ would then give the desired bound on the total number of $I$-interesting $I$-tuples.

So let us fix $b_1,\dots,b_m\in \Fpn$, and suppose that there are more than $k^2$ different $I$-interesting $I$-tuples $(x_i\mid i\in I)$ with $\sum_{i\in I}a_{t,i}x_i=b_t$ for $t=1,\dots,m$. Let $\mathcal{T}$ be the set of all such $I$-interesting $I$-tuples $(x_i\mid i\in I)$, then $|\mathcal{T}|>k^2$.

We claim that for any two distinct $(x_i\mid i\in I),(x_i'\mid i\in I)\in \mathcal{T}$ we must have $x_h\neq x_h'$ for all $h\in I$. Suppose that we had $x_h=x_h'$ for some $h\in I$. Note that then
\[\sum_{i\in I\sm\{h\}}a_{t,i}x_i=b_t-a_{t,h}x_h=  b_t-a_{t,h}x_h'=\sum_{i\in I\sm\{h\}}a_{t,i}x_i'\]
for $t=1,\dots,m$. However, the $m\times m$ matrix $(a_{t,i})_{t\in [m], i\in  I\sm\{h\}}$ is non-singular by the assumption in Theorem \ref{theo-distinct-induction}. Hence we can conclude that $x_i=x_i'$ for all $i\in I\sm\{h\}$ and hence $(x_i\mid i\in I)=(x_i'\mid i\in I)$, which is a contradiction. So for any two distinct $(x_i\mid i\in I),(x_i\mid i\in I)\in \mathcal{T}$ we must indeed have $x_h\neq x_h'$ for all $h\in I$.

Next, we claim that we can find vectors $x_j\in A$ for $j\in [k]\sm I$ such that $\sum_{j\in [k]\sm I}a_{t,j}x_j=-b_t$ for $t=1,\dots,m$ and such that there are at least $\l-m-1$ distinct vectors among $x_j$ for $j\in [k]\sm I$. Indeed, consider any $(x_i\mid i\in I)\in \mathcal{T}$. By Definition \ref{defi-interesting} we can extend $(x_i\mid i\in I)$ to a solution $(x_1,\dots,x_k)\in A^k$ to the system ($\star$) such that there are at least $\l-m-1$ distinct vectors among $x_j$ for $j\in [k]\sm I$. Now, for every $t=1,\dots,m$ we have $\sum_{j\in [k]\sm I}a_{t,j}x_j=-\sum_{i\in I}a_{t,i}x_i=-b_t$, as desired.

So let us now fix a choice of vectors $x_j\in A$ for $j\in [k]\sm I$ such that $\sum_{j\in [k]\sm I}a_{t,j}x_j=-b_t$ for $t=1,\dots,m$ and such that there are at least $\l-m-1$ distinct vectors among $x_j$ for $j\in [k]\sm I$. Then for every $(x_i\mid i\in I)\in \mathcal{T}$, the $k$-tuple $(x_1,\dots,x_k)\in A^k$ satisfies
\[x_{t,1}x_1+\dots+x_{t,k}x_k=\sum_{i\in I}a_{t,i}x_i+\sum_{j\in [k]\sm I}a_{t,j}x_j=b_t+(-b_t)=0\]
for $t=1,\dots,m$. In other words, for every $(x_i\mid i\in I)\in \mathcal{T}$, the $k$-tuple $(x_1,\dots,x_k)\in A^k$ is a solution to the system ($\star$). By Claim \ref{claim-structure-interesting} this means that at least one of the vectors $x_i$ for $i\in I$ must also occur among the vectors $x_j$ for $j\in [k]\sm I$. Hence for every $(x_i\mid i\in I)\in \mathcal{T}$ one of the $\vert I\vert =m+1\leq k$ vectors in the $I$-tuple $(x_i\mid i\in I)$ must be one of the fixed $k-|I|\leq k$ vectors in the set $\{x_j\mid j\in [k]\sm I\}$. As $|\mathcal{T}|>k^2$, by the pigeonhole principle this implies that there must be two distinct $I$-tuples $(x_i\mid i\in I),(x_i'\mid i\in I)\in \mathcal{T}$ with $x_h= x_h'$ for some index $h\in I$. But this is a contradiction to what we showed above. This completes the proof of the lemma.
\end{proof}

Now, let
\[d=\lfloor (1-c)n/m\rfloor,\]
and note that $d\geq 1$ by our assumption that $n\geq m/(1-c)$.

Let us consider a uniformly random $d$-dimensional subspace $V\su \Fpn$. The following two claims give useful bounds for the expected number of vectors in $A\cap V$ and the expected number of $I$-interesting $I$-tuples in $V$.

\begin{claim}\label{claim-expected-vectors-sampling1}
$\EE[|A\cap V|]> (C'+k^22^k)\cdot p^{(1-(m+1)c)n/m}$.
\end{claim}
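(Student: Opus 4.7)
The plan is to apply linearity of expectation together with Lemma \ref{lemma-subspace-probability} (for $s=1$) to every vector in $A$, using that $0\not\in A$.

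First I would write
\[
\EE[|A\cap V|]=\sum_{x\in A}\PP[x\in V]=|A|\cdot \frac{p^d-1}{p^n-1},
\]
since every $x\in A$ is non-zero, so Lemma \ref{lemma-subspace-probability} applies with $s=1$. Next, I would bound this ratio from below: since $d\geq 1$ we have $p^d\geq p\geq 2$ and hence $p^d-1\geq p^d/2$, while trivially $p^n-1\leq p^n$, giving
\[
\frac{p^d-1}{p^n-1}\;\geq\;\frac{p^d}{2p^n}.
\]

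Plugging in the hypothesis $|A|>2p(C'+k^22^k)\cdot p^{(1-c)n}$ yields
\[
\EE[|A\cap V|]\;>\;2p(C'+k^22^k)\cdot p^{(1-c)n}\cdot \frac{p^d}{2p^n}\;=\;(C'+k^22^k)\cdot p^{d-cn+1}.
\]
From $d=\lfloor (1-c)n/m\rfloor$ I get $d\geq (1-c)n/m-1$, so $d+1\geq (1-c)n/m$, giving
\[
\EE[|A\cap V|]\;>\;(C'+k^22^k)\cdot p^{(1-c)n/m-cn}\;=\;(C'+k^22^k)\cdot p^{(1-(m+1)c)n/m},
\]
where the final equality is the algebraic identity $(1-c)/m-c=(1-(m+1)c)/m$. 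This is exactly the claimed bound.

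The argument is essentially a direct computation, so there is no real obstacle; the only thing to watch is that $d\geq 1$ (needed to write $p^d-1\geq p^d/2$), which is guaranteed by the assumption $n\geq m/(1-c)$ already secured earlier in the proof, and that the exponent arithmetic matches the identity displayed in \eqref{eq-new-old-c-distinct}.
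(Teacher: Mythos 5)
Your proof is correct and follows essentially the same route as the paper: apply Lemma \ref{lemma-subspace-probability} with $s=1$ to each $x\in A$ (using $0\notin A$), sum by linearity of expectation, bound $(p^d-1)/(p^n-1)\geq p^d/(2p^n)$, and then use $d=\lfloor(1-c)n/m\rfloor$ together with the identity \eqref{eq-new-old-c-distinct}. The only cosmetic difference is that you track the exponent via $d+1\geq(1-c)n/m$, whereas the paper absorbs the same estimate into the chain $p^d/(2p^n)\geq (1/2p)\cdot p^{(1-c)n/m}/p^n$; these are identical.
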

\begin{proof}
Recall that we assumed $0\not\in A$. So by Lemma \ref{lemma-subspace-probability} (applied with $s=1$), for each vector $x\in A$ we have
\[\PP[x\in V]=\frac{p^d-1}{p^n-1}\geq \frac{p^d/2}{p^n}\geq \frac{1}{2p}\cdot \frac{p^{(1-c)n/m}}{p^n}.\]
Using $\vert A\vert> 2p(C'+k^22^k)\cdot p^{(1-c)n}$, we obtain
\[\EE[|A\cap V|]\geq \frac{1}{2p}\cdot \frac{p^{(1-c)n/m}}{p^n}\cdot |A|> (C'+k^22^k)\cdot p^{((1-c)n/m)-cn}=(C'+k^22^k)\cdot  p^{(1-(m+1)c)n/m},\]
as desired.
\end{proof}

\begin{claim}\label{claim-expected-interesting-tuples}
For each subset $I\su [k]$ of size $|I|=m+1$, the expected number of $I$-interesting $I$-tuples $(x_i\mid i\in I)$ with $x_i\in V$ for all $i\in I$ is at most $k^2\cdot p^{(1-(m+1)c)n/m}$.
\end{claim}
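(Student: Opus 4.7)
The plan is to combine Lemma \ref{lemma-number-interesting-tuples} with Lemma \ref{lemma-subspace-probability} via linearity of expectation. The key observation is that by Definition \ref{defi-interesting}, an $I$-interesting $I$-tuple $(x_i \mid i \in I)$ consists of $|I| = m+1$ \emph{linearly independent} vectors. This is precisely what is needed to apply Lemma \ref{lemma-subspace-probability} with $s = m+1$.

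First, I would fix an $I$-interesting $I$-tuple $(x_i \mid i \in I)$. Since its vectors are linearly independent, Lemma \ref{lemma-subspace-probability} applied with $s = m+1$ gives
\[
\PP[x_i \in V \text{ for all } i \in I] \leq \left(\frac{p^d}{p^n}\right)^{m+1}.
\]
By linearity of expectation and Lemma \ref{lemma-number-interesting-tuples}, the expected number of $I$-interesting $I$-tuples contained in $V$ is therefore at most
\[
k^2 \cdot p^{mn} \cdot \left(\frac{p^d}{p^n}\right)^{m+1} = k^2 \cdot p^{(m+1)d - n}.
\]

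It then remains to check that $(m+1)d - n \leq (1-(m+1)c)n/m$. Using $d = \lfloor (1-c)n/m \rfloor \leq (1-c)n/m$, we have
\[
(m+1)d - n \leq \frac{(m+1)(1-c)n}{m} - n = \frac{(m+1)(1-c) - m}{m}\cdot n = \frac{1-(m+1)c}{m}\cdot n,
\]
which yields the claimed bound $k^2 \cdot p^{(1-(m+1)c)n/m}$.

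There is no real obstacle here: the claim is a short computation once one recognizes that $I$-interestingness supplies the linear independence needed for Lemma \ref{lemma-subspace-probability}, and that the count from Lemma \ref{lemma-number-interesting-tuples} is the right input. The only thing to verify carefully is the arithmetic of the exponent, which matches the definition $d = \lfloor (1-c)n/m \rfloor$ chosen precisely so that the bound comes out to $p^{(1-(m+1)c)n/m}$, consistent with the bound obtained for $\EE[|A \cap V|]$ in Claim \ref{claim-expected-vectors-sampling1}.
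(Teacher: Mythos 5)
Your proof is correct and follows essentially the same approach as the paper: you apply Lemma~\ref{lemma-number-interesting-tuples} for the count, use the linear independence from Definition~\ref{defi-interesting} to invoke Lemma~\ref{lemma-subspace-probability} with $s = m+1$, and then bound $d$ by $(1-c)n/m$ to land on the stated exponent. The only cosmetic difference is that the paper substitutes $p^d \leq p^{(1-c)n/m}$ inside the probability bound before multiplying, whereas you multiply first and simplify the exponent afterwards.
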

\begin{proof}
Recall from Lemma \ref{lemma-number-interesting-tuples} that the number of $I$-interesting $I$-tuples $(x_i\mid i\in I)$ is at most $k^2\cdot p^{mn}$. For each of these tuples $(x_i\mid i\in I)$ the $m+1$ vectors $x_i$ with $i\in I$ are linearly independent (see Definition \ref{defi-interesting}), so by Lemma \ref{lemma-subspace-probability} we have
\[\PP[x_i\in V\text{ for all }i\in I]\leq \left(\frac{p^d}{p^n}\right)^{m+1}\leq \left(\frac{p^{(1-c)n/m}}{p^n}\right)^{m+1}.\]
Thus, all in all the expected number of $I$-interesting $I$-tuples $(x_i\mid i\in I)$ with $x_i\in V$ for all $i\in I$ is at most
\[\left(\frac{p^{(1-c)n/m}}{p^n}\right)^{m+1}\cdot k^2\cdot p^{mn}=k^2\cdot \frac{p^{(1-c)n(m+1)/m}}{p^n}=k^2\cdot p^{(1-(m+1)c)n/m},\]
as claimed
\end{proof}

Let $Z$ be the total number of $I$-interesting $I$-tuples $(x_i\mid i\in I)$ with $x_i\in V$ for all $i\in I$, summed over all subsets $I\su [k]$ of size $|I|=m+1$. Since there are only $\binom{k}{m+1}\leq 2^k$ choices for $I$, Claim \ref{claim-expected-interesting-tuples} implies that $\EE[Z]\leq 2^k\cdot k^2 \cdot p^{(1-(m+1)c)n/m}$. Hence together with Claim \ref{claim-expected-vectors-sampling1} we obtain
\[\EE[|A\cap V|-Z]> (C'+k^22^k)\cdot p^{(1-(m+1)c)n/m}- 2^k\cdot k^2\cdot  p^{(1-(m+1)c)n/m}= C'\cdot p^{(1-(m+1)c)n/m}.\]
Thus, for the rest of this proof, we can fix an outcome for the random $d$-dimensional subspace  $V\su \Fpn$ for which we have $|A\cap V|-Z> C'\cdot p^{(1-(m+1)c)n/m}$.

We can now define a subset $A^*\su A\cap V$ by deleting one vector  from each $I$-interesting $I$-tuple $(x_i\mid i\in I)$ with $x_i\in V$ for all $i\in I$ (for all subsets $I\su [k]$ of size $|I|=m+1$).  Then
\[|A^*|\geq |A\cap V|-Z> C'\cdot p^{(1-(m+1)c)n/m}\]
and there do not exist any $I$-interesting $I$-tuples $(x_i\mid i\in I)$ with $x_i\in A^*$ for all $i\in I$ (for any $I\su [k]$ of size $\vert I\vert=m+1$).

On the other hand, (\ref{eq-new-old-c-distinct}) yields
\[|A^*|> C'\cdot p^{(1-(m+1)c)n/m}=C'\cdot p^{(1-c')(1-c)n/m}\geq C'\cdot p^{(1-c')d}.\]
As $V\cong \Fp^d$, we can interpret $A^*\su V$ as a subset of $\Fp^d$ of size $|A^*|>C'\cdot p^{(1-c')d}$. By the induction hypothesis (which states that Theorem \ref{theo-distinct-induction} holds for $\l-1$ with the constants $C'$ and $c'$), we can conclude that there must be a solution $(x_1,\dots,x_k)\in (A^*)^k\su A^k$ to the system  ($\star$) such that $\dim\spn(x_1,\dots,x_k)\geq m+1$ and such that among  $x_1,\dots,x_k\in A$ there are at least $\l-1$ distinct vectors. By our assumption on $A$, there must be exactly $\l-1$ distinct vectors among  $x_1,\dots,x_k\in A$. But now Claim \ref{claim-contains-interesting} implies that there exists a subset $I\su [k]$ of size $|I|=m+1$ such that the $I$-tuple $(x_i\mid i\in I)$ is $I$-interesting. Since $x_i\in A^*$ for all $i\in I$, this is a contradiction. This finally finishes the proof of Theorem \ref{theo-distinct-induction}.
\end{proof}

\section{Background on the slice rank polynomial method}
\label{sect-background}

In this section, we will give some background on Tao's slice rank polynomial method, and explain the way in which we will use it in the proof of Theorem \ref{theo-rank}. This involves utiizing a lemma due to Sawin and Tao \cite{sawin-tao} (see Lemma \ref{lemma-sawin-tao} below), which gives lower bounds on the slice rank of non-diagonal tensors of a certain form (see Corollary \ref{coro-sawin-tao} below). This leads to Corollary \ref{coro-slice-rank-partitioned tensor} below, which will then be used as a black box in the proof of Theorem \ref{theo-rank}.

Statements like Lemma \ref{lemma-sawin-tao} and Corollary \ref{coro-sawin-tao} are hard to use in combinatorial applications, since in practice it is often difficult to find a (non-diagonal) tensor with the particular structure required in these statements. In fact, to the author's knowledge, all previous combinatorial applications of the slice rank polynomial method relied on diagonal tensors, meaning that this paper is the first to find a way to exploit the strength of Lemma \ref{lemma-sawin-tao} for non-diagonal tensors. Doing so requires a careful combinatorial setup and analysis, see Section \ref{sect-proof-rank}.

Let us start by defining the notion of slice rank, which was introduced by Tao \cite{tao} in a blog post and later given this name. In our setting it will be most convenient to think of $k$-dimensional tensors as functions $f: [L]^k\to \F$ for some integer $L\geq 0$ and some field $\F$. By considering a vector space $V$ over $\F$ with basis $v_1,\dots,v_L$, one can identify such a function with the element $\sum_{\l_1,\dots,\l_k\in [L]}f(\l_1,\dots,\l_k)\,v_{\l_1}\otimes \dots \otimes v_{\l_k}$ in the $k$-fold tensor product $V\otimes \dots \otimes V$. Yet another way to think of such a function $f: [L]^k\to \F$ is to think of a $k$-dimensional hypermatrix of size $L\times \dots\times L$ with entries in $\F$. However, in our discussion we will formulate everything just in terms of functions $f: [L]^k\to \F$.

\begin{definition}[Tao]
\label{defi-slice-rank}
Let $L\geq 1$ and $k\geq 2$ be integers, and let $\F$ be a field. A function $f:[L]^k\to \F$ has \emph{slice rank 1}, if it can be expressed in the form
\[f(\l_1,\dots,\l_k) = g(\l_i)\cdot h(\l_1,\dots,\l_{i-1},\l_{i+1},\dots,\l_k)\]
for some $i\in [k]$ and non-zero functions $g:[L]\to \F$ and $h:[L]^{k-1}\to \F$. The \emph{slice rank} of an arbitrary function $f:[L]^k\to \F$ is defined to be the minimum number $r$ such that $f$ can be written as the sum of $r$ functions of slice rank 1.
\end{definition}

In other words, a function $f:[L]^k\to \F$ is defined to have slice rank 1, if it can be written as the product of a function depending on just one of the $k$ variables and a function depending on the remaining $k-1$ variables. Note that this notion differs from the standard definition of the rank of a tensor, where a function of rank 1 it is required to be a product of $k$ functions depending on one variable each. The slice rank of a function is always at most as large as its rank according to the standard definition. Also note that the slice rank of any function $f:[L]^k\to \F$ is at most $L$. Indeed, for each $\l\in [L]$, we can take $g_\l$ to be the indicator function of $\l$ and define $h_\l(\l_2,\dots,\l_k)=f(\l,\l_2,\dots,\l_k)$. Then we have $f(\l_1,\dots,\l_k)=\sum_{\l\in [L]} g_\l(\l_i)\cdot h_\l(\l_2,\dots,\l_k)$, which shows that $f:[L]^k\to \F$ has slice rank at most $L$.

Tao's slice rank polynomial method \cite{tao} combines his notion of slice rank as in Definition \ref{defi-slice-rank} with an easy but very powerful polynomial factoring argument. This argument first appeared in work of Croot, Lev, and Pach \cite{croot-lev-pach} on subsets $\Z_4^n$ without $3$-term arithmetic progressions, and was then used again in Ellenberg and Gijswijt's breakthrough \cite{ellenberg-gijswijt} on the cap-set problem bounding the size of $3$-term-progression-free subsets of $\F_3^n$ (and more generally $\F_p^n$ for any fixed $p$). On his blog, Tao \cite{tao} gave a reformulation of the proof of Ellenberg and Gijswijt using the notion of the slice rank (which he introduced in this proof). His reformulation still uses the same polynomial factoring argument that is also at the heart of the proofs of Ellenberg-Gijswijt and of Croot-Lev-Pach in the $\Z_4^n$ setting.

In our context of studying solutions to linear systems of equations, this polynomial factoring argument gives the following lemma. For integers $m\geq 1$ and $k\geq 2m+1$ and a prime $p$, let us define
\begin{equation}\label{eq-defi-Gamma-tao}
\Gamma_{p,m,k}=\min_{0< z\leq 1}\frac{1+z+\dots+z^{p-1}}{z^{(p-1)m/k}}<p.
\end{equation}
It is not hard to see that this minimum exists. Furthermore, at $z=1$ the function on the left-hand size has value $p$ and positive derivative (since $k\geq 2m+1$), which implies that indeed $\Gamma_{p,m,k}<p$. It is also easy to see that $\Gamma_{p,m,k}\geq 1$, so we have $1\leq \Gamma_{p,m,k}<p$.

\begin{lemma}[Croot-Lev-Pach, Tao]
\label{lemma-upper-bound-slice-rank}
Suppose we are given a linear system of equations with coefficients in $\Fp$ and constant terms in $\Fpn$, consisting of $m\geq 1$ equations in $k\geq 2m+1$ variables. Then for any integer $L$ and any vectors $x_i^{(\l)}\in \Fpn$ for $i=1,\dots,k$ and $\l=1,\dots,L$, the function $f:[L]^k\to \Fp$ defined by
\[f(\l_1,\dots,\l_k)=
\begin{cases}
1&\text{if }(x_{1}^{(\l_1)},\dots,x_k^{(\l_k)})\text{ is a solution to the given system of equations}\\
0&\text{otherwise.}
\end{cases}\]
has slice rank at most $k\cdot (\Gamma_{p,m,k})^n$.
\end{lemma}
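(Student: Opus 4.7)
The plan is to apply the Croot--Lev--Pach / Tao polynomial factoring argument. I would start from the identity $1 - y^{p-1} = [y=0]$ for $y \in \Fp$ (by Fermat's little theorem), which for a vector $v \in \Fpn$ with coordinates $v_1, \dots, v_n$ gives $\prod_{t=1}^{n}(1 - v_t^{p-1}) = [v = 0]$. Writing the given system as $\sum_{i=1}^{k} a_{j,i} x_i = b_j$ with $b_j \in \Fpn$, the indicator $f$ that $(x_1^{(\l_1)}, \dots, x_k^{(\l_k)})$ is a solution becomes the polynomial identity
\[
f(\l_1,\dots,\l_k) = \prod_{j=1}^{m}\prod_{t=1}^{n}\left(1 - \left(\sum_{i=1}^{k} a_{j,i}\,(x_i^{(\l_i)})_t - (b_j)_t\right)^{p-1}\right).
\]

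Next, I regard the right-hand side as a polynomial in the $kn$ formal variables $X_{i,t} := (x_i^{(\l_i)})_t$, naturally grouped into $k$ blocks indexed by $i\in[k]$. Each of the $mn$ factors has total degree $p-1$ and each block contains only one of its variables, so after full expansion every monomial has total degree at most $mn(p-1)$ across all $kn$ variables. Using $X_{i,t}^p = X_{i,t}$ on $\Fp$-valued inputs, I reduce every exponent to lie in $\{0,1,\dots,p-1\}$ without changing the function on $[L]^k$ (and without increasing any degree).

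The key slice-rank step is then a pigeonhole: for each resulting monomial $M = M_1 M_2 \cdots M_k$, with $M_i$ supported on block $i$, some block $i^\star$ must satisfy $\deg M_{i^\star} \leq mn(p-1)/k$. I group monomials by this index $i^\star$ (breaking ties arbitrarily) and by the specific monomial $M_{i^\star}$ on that block, writing $f$ as a sum of slice-$1$ terms of the form $M_{i^\star}(X_{i^\star,\bullet})\cdot g(X_1,\dots,X_{i^\star-1},X_{i^\star+1},\dots,X_k)$. This shows the slice rank is at most $k\cdot N$, where $N$ is the number of monomials in a single block whose exponent vector $(e_1,\dots,e_n)$ satisfies $0 \le e_t \le p-1$ and $\sum_t e_t \le mn(p-1)/k$.

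Finally, I bound $N$ by the standard generating-function estimate: for any $0 < z \le 1$,
\[
N \;\le\; z^{-mn(p-1)/k}\sum_{e_1,\dots,e_n\in\{0,\dots,p-1\}} z^{e_1+\cdots+e_n} \;=\; \frac{(1+z+\cdots+z^{p-1})^n}{z^{mn(p-1)/k}},
\]
and minimizing over $z$ yields $N \le (\Gamma_{p,m,k})^n$ by the definition \eqref{eq-defi-Gamma-tao}, hence slice rank at most $k\cdot (\Gamma_{p,m,k})^n$ as claimed. I do not anticipate a real obstacle; the argument is entirely mechanical once the initial polynomial identity is written down. The most delicate bookkeeping is remembering to expand the whole product into monomials \emph{before} applying the reduction $X^p=X$, and to measure the block degrees in the reduced monomial basis so that the pigeonhole step is valid.
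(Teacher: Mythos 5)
Your proposal is correct and follows essentially the same route as the paper's proof: the same polynomial identity via Fermat's little theorem, the same reduction $X^p=X$, the same pigeonhole on block degrees, and the same generating-function (Markov-type) bound from (\ref{eq-defi-Gamma-tao}) to estimate the number of low-degree monomials in one block. The only cosmetic difference is that the paper phrases the final count probabilistically (as in Claim \ref{claim-bound-number-tuples}), while you write it directly as a generating-function sum; these are the same estimate.
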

\begin{proof}
Let the given linear system of equations be of the form 
\begin{align*}
a_{1,1}x_1+\dots+a_{1,k}x_k&=b_1\\
\vdots\qquad\qquad&\\
a_{m,1}x_1+\dots+a_{m,k}x_k&=b_m,
\end{align*}
where $a_j^i\in \Fp$ for $j=1,\dots,m$ and $i=1,\dots,k$ and $b_j\in \Fpn$ for $j=1,\dots,m$. Furthermore, for any vector $x\in \Fpn$, let us write $x(1),\dots, x(n)$ for the coordinates of $x$ (which are elements of $\Fp$). In particular, $x_i^{(\l)}(1),\dots, x_i^{(\l)}(n)$ are the coordinates of $x_i^{(\l)}\in\Fpn$ for $i=1,\dots,k$ and $\l=1,\dots,L$ and $b_j(1),\dots, b_j(n)$ are the coordinates of $b_j\in\Fpn$ for $j=1,\dots,m$.

Now, we claim that
\begin{equation}\label{eq-polynomial-tensor}
f(\l_1,\dots,\l_k)=\prod_{j=1}^{m}\prod_{s=1}^{n}\left(1-(a_{j,1}x_1^{(\l_1)}(s)+\dots+a_{j,k}x_k^{(\l_k)}(s)-b_j(s))^{p-1}\right)
\end{equation}
for all $\l_1,\dots,\l_k\in [L]$. Indeed, if $\l_1,\dots,\l_k\in [L]$ are such that $(x_{1}^{(\l_1)},\dots,x_k^{(\l_k)})$ is a solution to the system of equations above, then we have $a_{j,1}x_1^{(\l_1)}+\dots+a_{j,k}x_k^{(\l_k)}-b_j=0$ for all $j=1,\dots,m$ and consequently $a_{j,1}x_1^{(\l_1)}(s)+\dots+a_{j,k}x_k^{(\l_k)}(s)-b_j(s)=0$ for all $j=1,\dots,m$ and $s=1,\dots,n$. This means that all factors on the right-hand side of (\ref{eq-polynomial-tensor}) are equal to 1, and therefore the product is indeed equal to $f(\l_1,\dots,\l_k)=1$. In the other case, where $(x_{1}^{(\l_1)},\dots,x_k^{(\l_k)})$ is not a solution to the system of equations, there must be some $j\in\{1,\dots,m\}$ and $s\in \{1,\dots,n\}$ such that $a_{j,1}x_1^{(\l_1)}(s)+\dots+a_{j,k}x_k^{(\l_k)}(s)-b_j(s)\neq 0$. But then we have $(a_{j,1}x_1^{(\l_1)}(s)+\dots+a_{j,k}x_k^{(\l_k)}(s)-b_j(s))^{p-1}=1$, and so the factor $1-(a_{j,1}x_1^{(\l_1)}(s)+\dots+a_{j,k}x_k^{(\l_k)}(s)-b_j(s))^{p-1}$ on the right-hand side of (\ref{eq-polynomial-tensor}) is $0$. Hence the entire product on the right-hand side is $0$ and therefore equal to $f(\l_1,\dots,\l_k)=0$. Thus, (\ref{eq-polynomial-tensor})  is indeed true.

We can now use the polynomial representation of $f$ in (\ref{eq-polynomial-tensor}) to show the desired upper bound on the slice rank of $f$. Note that the right-hand side of  (\ref{eq-polynomial-tensor}) is a polynomial of degree $mn(p-1)$ in the $kn$ variables $x_i^{(\l_i)}(1),\dots,x_i^{(\l_i)}(n)$ for $i=1,\dots,k$. Let us imagine that we multiply out the product on the right-hand side of  (\ref{eq-polynomial-tensor}). Then we can write $f(\l_1,\dots,\l_k)$ as a linear combination of monomials in the variables $x_i^{(\l_i)}(1),\dots,x_i^{(\l_i)}(n)$ for $i=1,\dots,k$, where each monomial has degree at most $mn(p-1)$.

Since in $\Fp$ we have $y^p=y$ for all $y\in \Fp$, we can replace each higher power $(x_i^{(\l_i)}(s))^d$ with $d\geq p$ by a power $(x_i^{(\l_i)}(s))^{d'}$ with $d'\in \{1,\dots,p-1\}$ (and $d'\equiv d\mod{p-1}$). This way we can represent $f(\l_1,\dots,\l_k)$ as a linear combination of monomials in the variables $x_i^{(\l_i)}(1),\dots,x_i^{(\l_i)}(n)$ for $i=1,\dots,k$, where each monomial has degree at most $mn(p-1)$ and each individual variable appears with degree at most $p-1$.

For each of the monomials in this representation the total degree is at most $mn(p-1)$, so there must be some $i\in [k]$ such that the monomial has degree at most $mn(p-1)/k$ in the variables $x_i^{(\l_i)}(1),\dots,x_i^{(\l_i)}(n)$. Hence each monomial is of the form
\[(x_i^{(\l_i)}(1))^{d_1}\dotsm (x_i^{(\l_i)}(n))^{d_n}\cdot g(\l_1,\dots,\l_{i-1},\l_{i+1},\dots,\l_k)\]
for some $i\in [k]$, some $d_1,\dots,d_n\in \{0,\dots, p-1\}$ with $d_1+\dots+d_n\leq mn(p-1)/k$ and some function $g:[L]^{k-1}\to \Fp$ (where $g$ is a monomial in the variables $x_{i'}^{(\l_{i'})}(1),\dots,x_{i'}^{(\l_{i'})}(n)$ for $i\in [k]\sm \{i\}$).

By grouping together the monomials with the same $i$ and the same $d_1,\dots,d_n$ in the above representation, we now obtain a representation of $f(\l_1,\dots,\l_k)$ as a sum of terms of the form
\[(x_i^{(\l_i)}(1))^{d_1}\dotsm (x_i^{(\l_i)}(n))^{d_n}\cdot h(\l_1,\dots,\l_{i-1},\l_{i+1},\dots,\l_k)\]
for some $i\in [k]$, some $d_1,\dots,d_n\in \{0,\dots, p-1\}$ with $d_1+\dots+d_n\leq mn(p-1)/k$ and some function $h:[L]^{k-1}\to \Fp$ (here, the function $h$ is obtained as a linear combination of the functions $g$ that we previously considered). Note that each such term is a slice rank 1 function. Hence the slice rank of $f$ is at most
\[k\cdot |\{(d_1,\dots,d_n)\in \{0,\dots, p-1\}^n\mid d_1+\dots+d_n\leq mn(p-1)/k\}|\]
Thus, using the following claim, we obtain desired bound for the slice rank of $f$ .

\begin{claim}\label{claim-bound-number-tuples}
$|\{(d_1,\dots,d_n)\in \{0,\dots, p-1\}^n\mid d_1+\dots+d_n\leq mn(p-1)/k\}|\leq (\Gamma_{p,m,k})^n$.
\end{claim}
\begin{proof}
We need to prove that for a uniformly random choice of $(d_1,\dots,d_n)\in \{0,\dots, p-1\}^n$ we have $\PP[d_1+\dots+d_n\leq mn(p-1)/k]\leq (\Gamma_{p,m,k})^n\cdot p^{-n}$. Indeed, for any $0<z\leq 1$, by Markov's inequality we have
\begin{multline*}
\PP[d_1+\dots+d_n\leq mn(p-1)/k]\leq \PP[z^{d_1+\dots+d_n}\geq z^{mn(p-1)/k}]\\
\leq \frac{\EE[z^{d_1+\dots+d_n}]}{z^{mn(p-1)/k}}=\frac{\EE[z^{d_1}]\dotsm \EE[z^{d_n}]}{z^{mn(p-1)/k}}
=\frac{((1+z+\dots+z^{p-1})/p)^n}{z^{mn(p-1)/k}}=\left(\frac{1+z+\dots+z^{p-1}}{z^{(p-1)m/k}}\right)^n\cdot p^{-n},
\end{multline*}
where we used that $d_1,\dots,d_n$ can be viewed as independent  uniformly random elements of $\{0,\dots, p-1\}$. Hence
\[\PP[d_1+\dots+d_n\leq mn(p-1)/k]\leq \left(\min_{0<z\leq 1}\frac{1+z+\dots+z^{p-1}}{z^{(p-1)m/k}}\right)^n\cdot p^{-n}=(\Gamma_{p,m,k})^n\cdot p^{-n},\]
as desired.
\end{proof}
This finishes the proof of Lemma \ref{lemma-upper-bound-slice-rank}.
\end{proof}

We remark that the proof of Lemma \ref{lemma-upper-bound-slice-rank} is a straightforward generalization of the corresponding arguments in Tao's blog post \cite{tao} (which are for the case $m=1$ and $k=3$), apart from the bound for the quantity in Claim \ref{claim-bound-number-tuples}. This bound as well as the proof of Claim \ref{claim-bound-number-tuples} appeared for example in \cite[Proposition 4.12]{blasiak-et-al}.

As an example of a typical application of Tao's slice rank polynomial method, let us now show how Theorem \ref{theo-tao} can be deduced from Lemma \ref{lemma-upper-bound-slice-rank} and the following lemma due to Tao \cite[Lemma 1]{tao} stating that diagonal tensors have large slice rank.

\begin{lemma}[Tao]
\label{lemma-slice-rank-diag-tensor}
Let $L\geq 1$ and $k\geq 2$ be integers, and let $\F$ be a field. Suppose that $f:[L]^k\to \F$ is a function such that $f(\l_1,\dots,\l_k)\neq 0$ whenever $\l_1=\dots=\l_k$ and such that $f(\l_1,\dots,\l_k)= 0$ whenever $\l_1,\dots,\l_k\in [L]$ are not all equal. Then the slice rank of $f$ is equal to $L$.
\end{lemma}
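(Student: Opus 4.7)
The plan is to prove the upper bound $\operatorname{srk}(f) \leq L$ by exhibiting an explicit decomposition, and to establish the matching lower bound $\operatorname{srk}(f) \geq L$ by induction on $k \geq 2$. For the upper bound, writing $c_\l = f(\l,\dots,\l)$, the identity
\[
f(\l_1,\dots,\l_k) = \sum_{\l \in [L]} c_\l\, \mathbf{1}[\l_1=\l] \cdot \mathbf{1}[\l_2=\dots=\l_k=\l]
\]
expresses $f$ as a sum of $L$ slice-rank-$1$ functions, each the product of a function of $\l_1$ alone and a function of $(\l_2,\dots,\l_k)$.

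For the lower bound I proceed by induction on $k$. The base case $k=2$ is immediate: slice rank of a $2$-tensor equals matrix rank, and an $L\times L$ diagonal matrix with all non-zero diagonal entries has rank $L$. For the inductive step ($k \geq 3$), assume a slice-rank decomposition $f = \sum_{t=1}^r g_t(\l_{i_t}) h_t(\l_{[k]\sm\{i_t\}})$, and group the terms by direction by setting $J_i = \{t : i_t = i\}$ and $r_i = |J_i|$, so that $r = r_1 + \dots + r_k$. We may assume $r_1 < L$, otherwise $r \geq L$ is immediate. Let $V_1 = \spn\{g_t : t \in J_1\} \su \F^L$, so $\dim V_1 \leq r_1$, and hence the annihilator $V_1^\perp$ has dimension at least $L - r_1$. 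By choosing a pivot coordinate set $T \su [L]$ for $V_1^\perp$ (of size $\dim V_1^\perp \geq L - r_1$) and prescribing non-zero values on $T$ --- which is always possible over any field, using only the value $1$ over $\F_2$ --- I obtain a vector $\phi \in V_1^\perp$ with $|\operatorname{supp}(\phi)| \geq L - r_1$.

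Contracting direction $1$ against $\phi$ produces the $(k-1)$-tensor
\[
\tilde f(\l_2,\dots,\l_k) = \sum_{\l_1 \in [L]} \phi(\l_1) f(\l_1,\l_2,\dots,\l_k) = \phi(\l_2)\, c_{\l_2}\, \mathbf{1}[\l_2=\dots=\l_k],
\]
which is diagonal on $[L]^{k-1}$ with non-zero diagonal entries exactly on $\operatorname{supp}(\phi)$. Applying the same contraction to the decomposition of $f$: each $t \in J_1$ term dies by construction, while each $t \notin J_1$ term becomes a slice-rank-$1$ function on $[L]^{k-1}$; hence $\operatorname{srk}(\tilde f) \leq r - r_1$. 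On the other hand, restricting $\tilde f$ to $\operatorname{supp}(\phi)^{k-1}$ gives a diagonal $(k-1)$-tensor with all diagonal entries non-zero, whose slice rank is at least $|\operatorname{supp}(\phi)| \geq L - r_1$ by the inductive hypothesis. Since restriction cannot increase slice rank, this yields $L - r_1 \leq r - r_1$, i.e.\ $r \geq L$, which closes the induction.

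The main obstacle is constructing the test vector $\phi$: it must simultaneously satisfy the algebraic condition $\phi \in V_1^\perp$ and the combinatorial condition that its support has size at least $L - r_1$. The key input is the linear-algebra fact that any subspace $W \su \F^L$ of dimension $d$ admits a $d$-element coordinate set $T$ such that the restriction map $W \to \F^T$ is bijective, which lets us force $\phi$ to take non-zero values on every coordinate of $T$. Everything else is routine bookkeeping about how slice rank behaves under contractions and restrictions.
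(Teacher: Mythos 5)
Your proof is correct. Since the paper states this lemma with a citation to Tao's blog post but does not reproduce a proof, there is no in-paper argument to compare against directly; your induction on $k$ --- contract slot~$1$ against a vector $\phi$ in the annihilator of $\spn\{g_t : i_t = 1\}$ chosen via a pivot set to have support of size at least $L - r_1$, then apply the inductive hypothesis to the resulting diagonal $(k-1)$-tensor restricted to $\operatorname{supp}(\phi)^{k-1}$ --- is precisely the argument in Tao's original post. All the steps check out: the pivot-set device is the right way to get the support lower bound uniformly over all fields (including $\F_2$), the $J_1$-terms vanish under contraction while the others remain slice-rank $1$, and restriction cannot increase slice rank.

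Two minor remarks. First, the upper bound $\operatorname{srk}(f) \leq L$ is already observed in the paper in the paragraph following Definition~\ref{defi-slice-rank}, for an arbitrary $f:[L]^k\to\F$, via essentially the same decomposition; your derivation is fine but reproves a stated fact. Second, the paper's own route to this conclusion later in Section~\ref{sect-background} is indirect: it deduces the diagonal case as the special case $J_1 = [k]$ of Corollary~\ref{coro-sawin-tao}, which in turn rests on the Sawin--Tao antichain criterion Lemma~\ref{lemma-sawin-tao} (also cited without proof). Your argument is therefore more self-contained and more elementary than the machinery the paper subsequently invokes, at the cost of not giving the non-diagonal generalization that the paper actually needs for Theorem~\ref{theo-rank}.
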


By combining Lemmas \ref{lemma-upper-bound-slice-rank} and \ref{lemma-slice-rank-diag-tensor} one can obtain Theorem \ref{theo-tao}. This proof appeared in Tao's blog post \cite{tao} in the special case of $m=1$, $k=3$ and ($\star$) being the single equation $x_1-2x_2+x_3=0$, which corresponds to $3$-term arithmetic progressions (in the same blog post he also introduced the notion of slice rank and proved Lemma \ref{lemma-slice-rank-diag-tensor}).

\begin{proof}[Proof Theorem \ref{theo-tao}]
Let $A\su \Fpn$ be such that every solution $(x_1,\dots,x_k)\in A^k$ of the system ($\star$) satisfies $x_1=\dots=x_k$. Let $L=|A|$, and  let $A=\{x^{(1)},\dots, x^{(L)}\}$. Now, define the function $f:[L]^k\to \Fp$ by setting
\[f(\l_1,\dots,\l_k)=
\begin{cases}
1&\text{if }(x^{(\l_1)},\dots,x^{(\l_k)})\text{ is a solution to }(\star)\\
0&\text{otherwise.}
\end{cases}.\]
Note that by Lemma \ref{lemma-upper-bound-slice-rank} the slice rank of $f$ is at most $k\cdot (\Gamma_{p,m,k})^n$.

On the other hand, whenever $\l_1,\dots,\l_k$ are such that $(x^{(\l_1)},\dots,x^{(\l_k)})$ is a solution to  ($\star$), then by our assumption on $A$ we have $x^{(\l_1)}=\dots=x^{(\l_k)}$ and therefore $\l_1=\dots=\l_k$. Thus, we have $f(\l_1,\dots,\l_k)= 0$ whenever $\l_1,\dots,\l_k\in [L]$ are not all equal. Furthermore, for all $\l\in L$, the $k$-tuple $(x^{(\l)},\dots,x^{(\l)})$ is a solution to ($\star$), since we assumed that $a_{j,1}+\dots+a_{j,k}=0$ for $j=1,\dots,m$. Hence whenever $\l_1=\dots=\l_k$, we have $f(\l_1,\dots,\l_k)= 1\neq 0$. Thus, by Lemma \ref{lemma-slice-rank-diag-tensor} the slice rank of $f$ equals $L$.

All in all, we obtain $|A|=L\leq k\cdot (\Gamma_{p,m,k})^n$. So we have shown that every set $A\su \Fpn$ with the property that every solution $(x_1,\dots,x_k)\in A^k$ of the system ($\star$) satisfies $x_1=\dots=x_k$ must have size $|A|\leq k\cdot (\Gamma_{p,m,k})^n$.

This almost gives Theorem \ref{theo-tao}. In order to prove the theorem, we need to prove $|A|\leq (\Gamma_{p,m,k})^n$ instead of the weaker bound $|A|\leq k\cdot (\Gamma_{p,m,k})^n$. We can use a power trick, as follows.

Suppose that $A\su \Fpn$ is a set with the property that every solution $(x_1,\dots,x_k)\in A^k$ of the system ($\star$) satisfies $x_1=\dots=x_k$. Note that for every positive integer $m$, the set  $A^m=A\times\dots\times A\su \Fpn\times \dots\times \Fpn=\Fp^{nm}$ also has this property. Therefore, by what we proved above, we must have $|A|^m=|A^m|\leq k\cdot (\Gamma_{p,m,k})^{nm}$ and therefore  $|A|\leq k^{1/m}\cdot (\Gamma_{p,m,k})^n$ for every positive integer $m$. Hence  $|A|\leq (\Gamma_{p,m,k})^n$, as desired.
\end{proof} 

The proof of Theorem \ref{theo-tao} follows the typical pattern of applications of the slice rank polynomial method in combinatorics: If one  wants to bound the size of a combinatorial structure satisfying certain conditions (here the set $A=\{x^{(1)},\dots,x^{(L)}\}$), one defines a function $f$ in terms of this structure in such a way that a polynomial factoring argument as in the proof of Lemma \ref{lemma-upper-bound-slice-rank} gives an upper bound on the slice rank of $f$. On the other hand, the conditions on the combinatorial structure imply that the function $f$ must be of a special form, giving a lower bound for its slice rank in terms of the size of the structure. Combining both bounds, one then obtains an upper bound for the size of the combinatorial structure, as desired.

As mentioned above, so far in all combinatorial applications of the slice rank polynomial method, the ``special form'' of the function $f$ under consideration was a diagonal tensor as in Lemma \ref{lemma-slice-rank-diag-tensor}. Not long after Tao's original blog post \cite{tao}, another post by Sawin and Tao \cite{sawin-tao} appeared on Tao's blog,  proving the following statement \cite[Proposition 4]{sawin-tao} concerning the slice rank of certain non-diagonal tensors.

\begin{lemma}[Sawin-Tao]
\label{lemma-sawin-tao}
Let $L\geq 1$ and $k\geq 2$ be integers, and let $\F$ be a field. Fix $k$ total orderings $\preceq^1,\dots, \preceq^k$ on the set $[L]=\{1,\dots,L\}$, and consider the resulting product partial order $\preceq$ on $[L]^k$. Suppose that $f:[L]^k\to \F$ is a function such that the set $S=\{(\l_1,\dots,\l_k)\in [L]^k\mid f(\l_1,\dots,\l_k)\neq 0\}\su [L]^k$ is an antichain with respect to the partial order $\preceq$. Then the slice rank of $f$ equals
\[\min_{S=S_1\cup \dots\cup S_k}\left(|\pi_1(S_1)|+\dots+|\pi_k(S_k)|\right),\]
where the minimum is taken over all partitions $S=S_1\cup \dots\cup S_k$ and where $\pi_i:[L]^k\to L$ denotes the projection to the $i$-th factor for $i=1,\dots,k$.
\end{lemma}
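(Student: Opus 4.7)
The plan is to prove both inequalities, with the upper bound being routine and the lower bound being where the antichain hypothesis is used.

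For the upper bound, given any partition $S = S_1 \sqcup \dots \sqcup S_k$, decompose $f = \sum_{i=1}^k f_i$ with $f_i := f \cdot \mathbf{1}_{S_i}$. Since $f_i$ vanishes whenever $\ell_i \notin \pi_i(S_i)$, the tautological identity
\[ f_i(\ell_1, \dots, \ell_k) = \sum_{v \in \pi_i(S_i)} \mathbf{1}[\ell_i = v] \cdot f_i(\ell_1, \dots, \ell_{i-1}, v, \ell_{i+1}, \dots, \ell_k) \]
writes $f_i$ as a sum of $|\pi_i(S_i)|$ slice rank $1$ pieces (each a product of a function of $\ell_i$ alone and a function of the remaining coordinates). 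Summing over $i$ gives slice rank of $f$ at most $\sum_i |\pi_i(S_i)|$, and minimizing over partitions yields the bound. This direction does not use the antichain hypothesis.

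For the lower bound, fix an arbitrary slice rank decomposition $f = \sum_{s=1}^r g_s(\ell_{i_s}) h_s(\ell_{[k]\setminus\{i_s\}})$. For each $i$, let $W_i := \operatorname{span}\{g_s : i_s = i\}$ and $r_i := \dim W_i$, so $\sum_i r_i \leq r$. Using the total ordering $\preceq^i$, I perform Gaussian elimination inside $W_i$ to extract a basis $\tilde{g}_{i,1}, \dots, \tilde{g}_{i,r_i}$ with distinct $\preceq^i$-leading elements $a_{i,1} \succ^i \dots \succ^i a_{i,r_i}$ satisfying $\operatorname{supp}(\tilde{g}_{i,j}) \subseteq \{\ell : \ell \preceq^i a_{i,j}\}$ and $\tilde{g}_{i,j}(a_{i,j'}) = \delta_{j,j'}$. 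Setting $B_i := \{a_{i,1}, \dots, a_{i,r_i}\}$, I rewrite the decomposition as $f = \sum_{i,j} \tilde{g}_{i,j}(\ell_i) \tilde{h}_{i,j}(\ell_{[k]\setminus\{i\}})$. It then suffices to prove the covering claim $S \subseteq \bigcup_i \pi_i^{-1}(B_i)$: defining $S_i := (S \cap \pi_i^{-1}(B_i)) \setminus \bigcup_{i' < i} S_{i'}$ yields a partition of $S$ with $\sum_i |\pi_i(S_i)| \leq \sum_i |B_i| = \sum_i r_i \leq r$.

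The main obstacle is the covering claim, which is where the antichain hypothesis enters. Suppose for contradiction that some $(\ell_1, \dots, \ell_k) \in S$ satisfies $\ell_i \notin B_i$ for all $i$. Since $f(\ell_1, \dots, \ell_k) \neq 0$, some term $\tilde{g}_{i_*, j_*}(\ell_{i_*}) \tilde{h}_{i_*, j_*}(\ell_{[k]\setminus\{i_*\}})$ is nonzero, and the leading-element property combined with $\ell_{i_*} \notin B_{i_*}$ forces $\ell_{i_*} \prec^{i_*} a_{i_*, j_*}$. Replacing $\ell_{i_*}$ by $a_{i_*, j_*}$ produces a point $(\ell'_1, \dots, \ell'_k) \succ (\ell_1, \dots, \ell_k)$ in the product order, which by the antichain hypothesis lies outside $S$; hence $f(\ell'_1, \dots, \ell'_k) = 0$, and the orthogonality $\tilde{g}_{i_*, j}(a_{i_*, j_*}) = \delta_{j, j_*}$ isolates the contribution $\tilde{h}_{i_*, j_*}(\ell_{[k]\setminus\{i_*\}}) \neq 0$ from the coordinate-$i_*$ slices, forcing the contributions from coordinates $i \neq i_*$ to cancel it. Iterating this coordinate-replacement argument, each step yielding a point strictly above $(\ell_1, \dots, \ell_k)$ (and hence outside $S$) and revealing a further active coordinate, ultimately contradicts the antichain property. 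Carefully tracking these cancellations so that the iteration terminates in a genuine contradiction is the combinatorial heart of the Sawin-Tao argument.
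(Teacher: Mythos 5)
The paper does not prove this lemma; it is quoted verbatim from the Sawin--Tao blog post (their Proposition 4) and used as a black box, so there is no internal proof to compare against.

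Your upper bound argument is correct and standard. Your lower bound setup is also the right one: split the slices by coordinate, row-reduce the span $W_i$ of the $\ell_i$-slice profiles to a basis $\tilde g_{i,1},\dots,\tilde g_{i,r_i}$ with distinct $\preceq^i$-leading elements $a_{i,1}\succ^i\dots\succ^i a_{i,r_i}$ and $\tilde g_{i,j}(a_{i,j'})=\delta_{j,j'}$, set $B_i=\{a_{i,1},\dots,a_{i,r_i}\}$, and reduce everything to the covering claim $S\su\bigcup_i\pi_i^{-1}(B_i)$. The derivation of the bound from the covering claim is fine.

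The gap is in the covering claim itself, and you acknowledge it (``carefully tracking these cancellations\ldots is the combinatorial heart''). The trouble with the ``replace one coordinate at a time'' iteration is that it has no exit. After the first replacement $\ell_{i_*}\mapsto a_{i_*,j_*}$ you correctly get $f=0$ at the new point and isolate the nonzero contribution $\tilde h_{i_*,j_*}$, forcing a cancellation from elsewhere. But after one or more replacements, the contribution from an \emph{already-replaced} coordinate $i$ (currently at $\ell'_i=a_{i,j_i}\in B_i$) can be nonzero, and here the reduced-echelon structure works against you: since $\tilde g_{i,j'}(a_{i,j_i})=\delta_{j',j_i}$, there is no $j'$ with $a_{i,j'}\succ^i a_{i,j_i}$ and $\tilde g_{i,j'}(a_{i,j_i})\neq 0$, so coordinate $i$ cannot be pushed further up. One can then be stuck with an identity $0=\sum_{i\in M}\tilde h_{i,j_i}(\ell'_{[k]\setminus\{i\}})$ over the set $M$ of modified coordinates, with several nonzero terms and no contradiction. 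A maximality argument over points in $\prod_i\bigl(\{\ell_i\}\cup(B_i\cap\{a:a\succ^i\ell_i\})\bigr)$ kills the terms coming from \emph{unmodified} coordinates but, for the same reason, says nothing about the modified ones.

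The clean way to close the gap is a test-function argument rather than an iteration. Fix $(\ell_1,\dots,\ell_k)\in S$ with $\ell_i\notin B_i$ for all $i$ and set $P_i:=\{\ell_i\}\cup B_i^+$ where $B_i^+:=B_i\cap\{a:a\succ^i\ell_i\}$. Choose $\phi_i:P_i\to\F$ with $\phi_i(\ell_i)=1$ and $\sum_{a\in P_i}\phi_i(a)\tilde g_{i,j}(a)=0$ for every $j$ with $a_{i,j}\in B_i^+$; this linear system has a one-dimensional solution space and every nonzero solution has $\phi_i(\ell_i)\neq 0$, because the restrictions $\tilde g_{i,j}|_{B_i^+}$ form an identity block. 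The remaining $\tilde g_{i,j}$ with $a_{i,j}\prec^i\ell_i$ vanish identically on $P_i$ by the support condition. Now pair $\prod_i\phi_i$ against $f$ over $P_1\times\dots\times P_k$: the slice-rank decomposition gives $0$ because each summand factors as $\bigl(\sum_{a}\phi_i(a)\tilde g_{i,j}(a)\bigr)\cdot(\cdots)=0$, while the antichain hypothesis forces $f|_{P_1\times\dots\times P_k}$ to be a spike at $(\ell_1,\dots,\ell_k)$ (every other point of the box is strictly above it), giving $f(\ell_1,\dots,\ell_k)$. Hence $f(\ell_1,\dots,\ell_k)=0$, a contradiction. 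This is where the antichain hypothesis actually does its work, and it replaces your unfinished iteration.

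One small point: $\tilde g_{i,j}(a_{i,j'})=\delta_{j,j'}$ for $j'>j$ is not automatic from ``distinct leading elements''; it requires the full reduced (not just staircase) echelon form, which you should state explicitly since the covering argument uses it.
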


The following corollary of Lemma \ref{lemma-sawin-tao} gives a slightly more concrete statement about certain tensors having large slice rank.

\begin{corollary}\label{coro-sawin-tao}
Let $L\geq 1$ and $k\geq 2$ be integers, and let $\F$ be a field. Let $[k]=J_1\cup \dots\cup J_t$ be a partition such that $|J_h|\geq 2$ for $h=1,\dots,t$. Now,  suppose that $f:[L]^k\to \F$ is a function such that $f(\l,\dots,\l)\neq 0$ for all $\l\in [L]$ and such that for any choice of $\l_1,\dots\l_k\in [L]$ with $f(\l_1,\dots,\l_k)\neq 0$ we have $|\{ \l_j \mid j\in J_h\}|=1$ for $h=1,\dots,t$. Then the slice rank of $f$ is equal to $L$.
\end{corollary}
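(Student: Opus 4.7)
The plan is to apply the Sawin--Tao lemma (Lemma~\ref{lemma-sawin-tao}) by choosing the orderings $\preceq^1,\dots,\preceq^k$ on $[L]$ so that the support $S=\{(\l_1,\dots,\l_k)\in[L]^k:f(\l_1,\dots,\l_k)\neq 0\}$ becomes an antichain. For each $h=1,\dots,t$, use that $|J_h|\geq 2$ to pick two distinct indices $j_h^+, j_h^-\in J_h$. Put the standard order $1<2<\dots<L$ on coordinate $j_h^+$ and the reverse order on coordinate $j_h^-$; on the remaining coordinates use any order, say the standard one.

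To verify that $S$ is an antichain under the resulting product order, take $(\l_1,\dots,\l_k)\preceq (\l'_1,\dots,\l'_k)$ with both tuples in $S$. The hypothesis on $f$ makes $\l_j$ constant on each block $J_h$; call this common value $\mu_h$, and similarly $\mu'_h$ for the primed tuple. Comparing the $j_h^+$-coordinates gives $\mu_h\leq \mu'_h$, while comparing the $j_h^-$-coordinates gives $\mu_h\geq \mu'_h$; hence $\mu_h=\mu'_h$ for all $h$, and the two tuples coincide.

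With $S$ an antichain, Lemma~\ref{lemma-sawin-tao} identifies the slice rank of $f$ with $\min_{S=S_1\cup\dots\cup S_k}\sum_{j=1}^k|\pi_j(S_j)|$. The upper bound $L$ is immediate: take $S_1=S$ and $S_j=\emptyset$ for $j\geq 2$, noting $|\pi_1(S)|\leq L$. For the matching lower bound, use that the full diagonal $\{(\l,\dots,\l):\l\in[L]\}$ lies in $S$ by hypothesis. Any partition $S=S_1\cup\dots\cup S_k$ assigns each $\l\in[L]$ to the unique index $j(\l)$ with $(\l,\dots,\l)\in S_{j(\l)}$; setting $T_j=\{\l\in[L]:j(\l)=j\}$, the sets $T_j$ partition $[L]$ and $T_j\subseteq \pi_j(S_j)$, so $\sum_{j=1}^k|\pi_j(S_j)|\geq \sum_{j=1}^k|T_j|=L$.

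I expect no real obstacles here: the content of the argument is the antichain construction, and the hypothesis $|J_h|\geq 2$ is exactly what is needed to pair a standard-ordered coordinate with a reverse-ordered coordinate inside each block, after which the full diagonal alone pins down the slice rank from below.
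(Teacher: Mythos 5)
Your proposal is correct and follows essentially the same route as the paper: pair a forward-ordered and a reverse-ordered coordinate within each block $J_h$ to force the support to be an antichain, then invoke Lemma~\ref{lemma-sawin-tao} and use the diagonal to extract the lower bound $L$. The only cosmetic difference is that you read off the upper bound $L$ from the Sawin--Tao formula via the trivial partition $S_1=S$, whereas the paper appeals to the general fact (noted just after Definition~\ref{defi-slice-rank}) that any $f:[L]^k\to\F$ has slice rank at most $L$; both are equivalent.
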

\begin{proof}
Let us define total orderings $\preceq^1,\dots, \preceq^k$ on the set $[L]$ as follows. For each $h=1,\dots,t$, choose one element $j_h\in J_h$ and define $\preceq^{j_h}$ to be the canonical increasing ordering $1\preceq^{j_h} 2\preceq^{j_h}\dots \preceq^{j_h} L$ on $[L]$. Furthermore, choose a different element $i_h\in J_h$ (recall that $|J_h|\geq 2$) and define the total ordering $\preceq^{i_h}$ to be the opposite total ordering on $[L]$, i.e.\ the ordering $L\preceq^{i_h} L-1\preceq^{i_h}\dots \preceq^{i_h} 1$. For all remaining elements $j\in J_h\sm \{j_h,i_h\}$, define $\preceq^j$ to be an arbitrary total ordering on $[L]$.

We claim that the function $f:[L]^k\to \F$ satisfies the assumption in Lemma \ref{lemma-sawin-tao} with respect to the total orderings $\preceq^1,\dots, \preceq^k$ we just defined. Indeed, suppose that $(\l_1,\dots,\l_k), (\l_1',\dots,\l_k')\in [L]^k$ with $f(\l_1,\dots,\l_k)\neq 0$ and $f(\l_1',\dots,\l_k')\neq 0$ are such that $(\l_1,\dots,\l_k)\preceq (\l_1',\dots,\l_k')$ in the product partial order $\preceq$. Then $\l_i\preceq^i \l_i'$ for all $i\in [k]$. We will show that we actually have $\l_i=\l_i'$ for all $i\in [k]$. So let $i\in [k]$, and let $h\in [t]$ be such that $i\in J_h$. By our assumption on $f$ we have $|\{ \l_j \mid j\in J_h\}|=1$ and $|\{ \l_j' \mid j\in J_h\}|=1$, which means that $\l_i=\l_{j_h}=\l_{i_h}$ and $\l_i'=\l_{j_h}'=\l_{i_h}'$ for the elements $j_h,i_h\in J_h$ that we fixed above. Hence $\l_i=\l_{j_h}\preceq^{j_h} \l_{j_h}'=\l_i'$ in the total ordering  $\preceq^{j_h}$, which by the definition of $\preceq^{j_h}$ means that $\l_i\leq \l_i'$. On the other hand we have $\l_i=\l_{i_h}\preceq^{i_h} \l_{i_h}'=\l_i'$ in the total ordering  $\preceq^{i_h}$, which by the definition of $\preceq^{i_h}$ means that $\l_i\geq \l_i'$. Thus, we can conclude that $\l_i=\l_i'$ for all $i\in [k]$, so $(\l_1,\dots,\l_k)=(\l_1',\dots,\l_k')$.

Hence the set $S=\{(\l_1,\dots,\l_k)\in [L]^k\mid f(\l_1,\dots,\l_k)\neq 0\}\su [L]^k$ is indeed an antichain with respect to the partial order $\preceq$. So Lemma \ref{lemma-sawin-tao} applies and we can conclude that the slice rank of $f$ equals
\[\min_{S=S_1\cup \dots\cup S_k}\left(|\pi_1(S_1)|+\dots+|\pi_k(S_k)|\right),\]
where the minimum is taken over all partitions $S=S_1\cup \dots\cup S_k$.

We claim that for any partition $S=S_1\cup \dots\cup S_k$ we must have $\pi_1(S_1)\cup\dots \cup\pi_k(S_k)=[L]$. Indeed, for any $\l\in [L]$, by the assumption $f(\l,\dots,\l)\neq 0$ we have $(\l,\dots,\l)\in S$ and therefore $(\l,\dots,\l)\in S_i$ for some $i\in [k]$. This means that $\l\in \pi_i(S_i)\su \pi_1(S_1)\cup\dots \cup\pi_k(S_k)$. Hence $\pi_1(S_1)\cup\dots \cup\pi_k(S_k)=[L]$ and therefore $|\pi_1(S_1)|+\dots+|\pi_k(S_k)|\geq |\pi_1(S_1)\cup\dots \cup\pi_k(S_k)|=L$ for any partition  $S=S_1\cup \dots\cup S_k$.

Thus, the slice rank of $f$ is at least $L$. On the other hand, the slice rank of $f:[L]^k\to \F$ is also at most $L$. Hence the slice rank of $f$ equals $L$.
\end{proof}

Note that Corollary \ref{coro-sawin-tao} can be viewed as a generalization of Lemma \ref{lemma-slice-rank-diag-tensor} for diagonal tensors by considering the partition of $[k]$ into a single set $J_1=[k]$. The slice rank polynomial method has had many interesting applications in combinatorics (see for example Grochow's survey \cite{grochow}), but all of them rely on using a diagonal tensor (meaning that they proceed via Lemma \ref{lemma-slice-rank-diag-tensor}). This paper gives the first combinatorial application that uses the additional strength of Corollary \ref{coro-sawin-tao} for non-diagonal tensors.

Being able to use Corollary \ref{coro-sawin-tao} in the proof of Theorem \ref{theo-rank} requires new combinatorial ideas and a rather technical inductive setup. In order to not further complicate the presentation of the proof of Theorem \ref{theo-rank} in Section \ref{sect-proof-rank}, we record the following statement here, which combines Corollary \ref{coro-sawin-tao} with Lemma \ref{lemma-upper-bound-slice-rank}. This statement will then be used in the proof of Theorem \ref{theo-rank} as black box, so that no discussion of slice rank arguments is required in Section \ref{sect-proof-rank} anymore.

\begin{corollary}\label{coro-slice-rank-partitioned tensor}
Suppose we are given a linear system of equations with coefficients in $\Fp$ and constant terms in $\Fpn$, consisting of $m\geq 1$ equations in $k\geq 2m+1$ variables. Let $(x_1^{(\l)},\dots,x_k^{(\l)})\in (\Fpn)^k$ for $\l=1,\dots,L$ be solutions in $\Fpn$ to this system of equations. Suppose that there exists a partition $[k]=J_1\cup \dots\cup J_t$ with $|J_h|\geq 2$ for $h=1,\dots,t$ such that the following condition holds: For any choice of $\l_1,\dots\l_k\in [L]$ such that $(x_1^{(\l_1)},\dots,x_k^{(\l_k)})$ is a solution to the given system of equations, we have $|\{ \l_j \mid j\in J_h\}|=1$ for all $h=1,\dots,t$. Then we must have $L\leq k\cdot (\Gamma_{p,m,k})^n$.
\end{corollary}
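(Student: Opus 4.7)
The plan is to define an appropriate indicator tensor and sandwich its slice rank between the upper bound from Lemma \ref{lemma-upper-bound-slice-rank} and the lower bound from Corollary \ref{coro-sawin-tao}. Concretely, I would define $f:[L]^k\to\Fp$ by setting $f(\l_1,\dots,\l_k)=1$ if $(x_1^{(\l_1)},\dots,x_k^{(\l_k)})$ is a solution to the given system and $f(\l_1,\dots,\l_k)=0$ otherwise. This is exactly the type of tensor handled by Lemma \ref{lemma-upper-bound-slice-rank} (which applies because $k\geq 2m+1$), so the slice rank of $f$ is at most $k\cdot(\Gamma_{p,m,k})^n$.

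For the matching lower bound, I would verify that $f$ satisfies the hypotheses of Corollary \ref{coro-sawin-tao} for the given partition $[k]=J_1\cup\dots\cup J_t$. The diagonal condition $f(\l,\dots,\l)\neq 0$ for all $\l\in[L]$ holds because $(x_1^{(\l)},\dots,x_k^{(\l)})$ is, by assumption, a solution to the system. The structural condition on the support of $f$, namely that $|\{\l_j\mid j\in J_h\}|=1$ for every $h$ whenever $f(\l_1,\dots,\l_k)\neq 0$, is precisely the hypothesis of the corollary we are proving. Therefore Corollary \ref{coro-sawin-tao} applies and tells us that the slice rank of $f$ equals $L$.

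Combining the two bounds yields $L\leq k\cdot(\Gamma_{p,m,k})^n$, which is the desired inequality. There is no real obstacle here beyond checking that the hypotheses of the two black-box lemmas line up exactly: the only subtlety is noting that the system-of-equations set-up in Lemma \ref{lemma-upper-bound-slice-rank} is stated with constant terms in $\Fpn$ (so it applies verbatim to the system in the corollary) and that the partition condition supplied in the corollary matches the one demanded by Corollary \ref{coro-sawin-tao} without any reindexing. Once these alignments are observed the proof is a one-line sandwich argument.
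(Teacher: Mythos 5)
Your proposal is correct and coincides exactly with the paper's proof: both define the same $0$/$1$ indicator tensor $f$ on $[L]^k$, bound its slice rank above by $k\cdot(\Gamma_{p,m,k})^n$ via Lemma \ref{lemma-upper-bound-slice-rank}, and bound it below by $L$ via Corollary \ref{coro-sawin-tao} using the given partition, with the diagonal nonvanishing coming from each $(x_1^{(\l)},\dots,x_k^{(\l)})$ being a solution. Your write-up just makes the hypothesis-checking slightly more explicit than the paper's two-sentence version.
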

\begin{proof}
Let us define the function $f:[L]^k\to \Fp$ as in Lemma \ref{lemma-upper-bound-slice-rank}, then the slice rank of $f$ is at most $k\cdot (\Gamma_{p,m,k})^n$. On the other hand, the function $f$ satisfies the assumptions in Corollary \ref{coro-sawin-tao}, so the slice rank of $f$ is equal to $L$. Hence $L\leq k\cdot (\Gamma_{p,m,k})^n$, as desired.
\end{proof}

\begin{remark} We remark hat the constant factor $k$ can be removed from the bound $L\leq k\cdot (\Gamma_{p,m,k})^n$ in Corollary \ref{coro-slice-rank-partitioned tensor} by a power trick similar to the one in the proof of Theorem \ref{theo-tao}. However, since this power trick is notationally cumbersome in this setting and since the bound $L\leq k\cdot (\Gamma_{p,m,k})^n$ suffices for our purposes, we stated Corollary \ref{coro-slice-rank-partitioned tensor} with this weaker bound.
\end{remark}

\section{Proof of Theorem \ref{theo-rank}}
\label{sect-proof-rank}

\subsection{Inductive setup}

Somewhat similar to our approach in Section \ref{sect-proof-distinct}, we will also an inductive argument to prove Theorem \ref{theo-rank}. However, this induction requires a somewhat technical setup. We will associate a certain weight to each solution $(x_1,\dots,x_k)$ to the system ($\star$), and then induct on this quantity.

Let us fix positive integers $m$ and $k$, a prime $p$ and coefficients $a_{j,i}\in \Fp$ for the system ($\star$) such that the assumption $a_{j,1}+\dots+a_{j,k}=0$ for $j=1,\dots,m$ in Theorem \ref{theo-rank} is satisfied. We start by making the following definitions.

\begin{definition}\label{defi-admissible}
Let $V$ be a vector space over $\Fp$. For a given solution $(x_1,\dots,x_k)\in (V\sm \{0\})^k$ to the system ($\star$), let us say that a subset $I\su [k]$ is \emph{admissible} if it satisfies the following two conditions:
\begin{itemize}
\item[(i)] The vectors $x_i$ for $i\in I$ are linearly independent.
\item[(ii)] For all $j\in [k]\sm I$ we have $x_j\not\in \spn(x_i\mid i\in I)$.
\end{itemize}
\end{definition}

Note that the empty set $I=\emptyset$ always satisfies the conditions for being admissible (here, we use that the vectors $x_1,\dots,x_k$ are all non-zero). In particular, for every solution $(x_1,\dots,x_k)\in (V\sm \{0\})^k$ to the system ($\star$) there exists some admissible subset $I\su [k]$.

\begin{definition}\label{defi-weight-set}
Let $V$ be a vector space over $\Fp$. For a given solution $(x_1,\dots,x_k)\in (V\sm \{0\})^k$ to the system ($\star$), let us define the \emph{weight} of an admissible subset $I\su [k]$ as follows: Letting $U=\spn(x_i\mid i\in I)$, we define the weight of $I$ with respect to $(x_1,\dots,x_k)$ to be
\[(k+1)\cdot |I|+\left|\left\{\spn(\proj_{V/U}(x_j))\,\Big|\, j\in [k]\sm I\right\}\right|.\]
\end{definition}

The second summand in the expression above for the weight of $I$ counts the number of different subspaces of the quotient space $V/U$ that are of the form $\spn(\proj_{V/U}(x_j))$ for some $j\in [k]\sm I$. Note that we clearly have $|\{\spn(\proj_{V/U}(x_j))\mid j\in [k]\sm I\}|\leq k-|I|\leq k$. 

\begin{definition}\label{defi-weight-solution}
Let $V$ be a vector space over $\Fp$. For a solution $(x_1,\dots,x_k)\in (V\sm \{0\})^k$ to the system ($\star$), let us define the \emph{weight} $\omega(x_1,\dots,x_k)$ of $(x_1,\dots,x_k)$ to be the maximum weight of any admissible subset $I\su [k]$ with respect to $(x_1,\dots,x_k)$. Furthermore, let us define $\mathcal{I}(x_1,\dots,x_k)\su [k]$ to be an admissible subset $I\su [k]$ for $(x_1,\dots,x_k)$ where this maximum weight is attained (if there are several choices for $I$ attaining the maximum weight, we choose one arbitrarily).
\end{definition}

Note that the weight $\omega(x_1,\dots,x_n)$ is clearly a non-negative integer. Also note that for a vector spaces $V\su V'$ over $\Fp$, and a solution $(x_1,\dots,x_k)\in (V\sm \{0\})^k\su (V'\sm \{0\})^k$ to the system ($\star$), the weight $\omega(x_1,\dots,x_k)$ of $(x_1,\dots,x_k)$ does not depend on whether $x_1,\dots,x_k$ are interpreted as vectors in $V$ or in $V'$.

\begin{claim}\label{claim-properties-weight}
Let $V$ be a vector space over $\Fp$, and let $(x_1,\dots,x_k)\in (V\sm \{0\})^k$ be a solution to the system ($\star$). Then we have $\omega(x_1,\dots,x_k)\not\in \{0,(k+1),2\cdot (k+1),\dots,(k-1)\cdot (k+1)\}$,
\[|\mathcal{I}(x_1,\dots,x_k)|= \lfloor\omega(x_1,\dots,x_k)/(k+1)\rfloor,\]
and
\[\dim\spn(x_1,\dots,x_k)\geq \omega(x_1,\dots,x_k)/(k+1).\]
\end{claim}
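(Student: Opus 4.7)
The plan is to unpack the definitions and show that the second summand in Definition \ref{defi-weight-set} is at most $k$ and is strictly positive precisely when $\mathcal{I}\neq[k]$; this traps $\omega$ into a short interval around a specific multiple of $k+1$, from which all three statements fall out immediately.

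First I would set up abbreviations $\mathcal{I}=\mathcal{I}(x_1,\dots,x_k)$, $\l=|\mathcal{I}|$, $U=\spn(x_i\mid i\in\mathcal{I})$, and
\[s=\left|\left\{\spn(\proj_{V/U}(x_j))\,\Big|\,j\in[k]\sm\mathcal{I}\right\}\right|,\]
so that $\omega(x_1,\dots,x_k)=(k+1)\l+s$ directly from Definitions \ref{defi-weight-set} and \ref{defi-weight-solution}. Admissibility condition (i) makes the vectors $x_i$ for $i\in\mathcal{I}$ linearly independent, hence $\dim U=\l$; condition (ii) forces $x_j\notin U$ for every $j\in[k]\sm\mathcal{I}$. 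Trivially $0\leq s\leq|[k]\sm\mathcal{I}|=k-\l$.

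The crucial dichotomy is: either $\l=k$, so $[k]\sm\mathcal{I}=\emptyset$ and $s=0$; or $\l<k$, in which case condition (ii) gives $\proj_{V/U}(x_j)\neq 0$ for every $j\in[k]\sm\mathcal{I}$, contributing at least one nonzero one-dimensional subspace of $V/U$ to the set counted by $s$, so $s\geq 1$. Combined with $s\leq k-\l$, this confines $\omega$ either to the exact value $(k+1)k$ (when $\l=k$) or to the open interval $\bigl((k+1)\l,\,(k+1)(\l+1)\bigr)$ (when $\l<k$), since $(k+1)\l+(k-\l)=k\l+k<(k+1)(\l+1)$. Both $\omega\notin\{0,k+1,\dots,(k-1)(k+1)\}$ and the identity $\lfloor\omega/(k+1)\rfloor=\l=|\mathcal{I}|$ drop out of this trapping.

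For the dimension bound, note $\dim\spn(x_1,\dots,x_k)\geq\dim U=\l$. When $\l=k$ this already gives $\dim\spn(x_1,\dots,x_k)\geq k=\omega/(k+1)$. When $\l<k$, the inequality $s\geq 1$ supplies some $j\in[k]\sm\mathcal{I}$ with $x_j\notin U$, so $\spn(x_1,\dots,x_k)\supsetneq U$ and thus $\dim\spn(x_1,\dots,x_k)\geq\l+1>\l+s/(k+1)=\omega/(k+1)$, where the strict inequality uses $s\leq k-\l<k+1$. No serious obstacle is expected here: the claim is essentially bookkeeping that certifies Definition \ref{defi-weight-set} was designed so that division of $\omega$ by $k+1$ recovers $|\mathcal{I}|$, with the residue capturing the projection information about the remaining $x_j$'s.
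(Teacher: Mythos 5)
Your proof is correct and follows essentially the same route as the paper's: both split on whether $\mathcal{I}=[k]$ and otherwise trap $\omega$ strictly between consecutive multiples of $k+1$ using the bounds $1\leq s\leq k-|\mathcal{I}|$ on the second summand. The only cosmetic difference is that you name the second summand $s$ and phrase the trapping as an open-interval statement, whereas the paper writes the inequalities $(k+1)|I|+1\leq\omega\leq(k+1)|I|+k$ directly.
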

\begin{proof}
Let $I=\mathcal{I}(x_1,\dots,x_k)$, and let $U=\spn(x_i\mid i\in I)$. Then  $\omega(x_1,\dots,x_k)$ equals the weight of $I$ with respect to $(x_1,\dots,x_k)$. In other words,
\begin{equation}\label{eq-expression-omega}
\omega(x_1,\dots,x_k)=(k+1)\cdot |I|+\left|\left\{\spn(\proj_{V/U}(x_j))\,\Big|\, j\in [k]\sm I\right\}\right|.
\end{equation}

First, let us consider the case $I=[k]$. Then we have $|\mathcal{I}(x_1,\dots,x_k)|=|I|=k$ and $\omega(x_1,\dots,x_k)=(k+1)\cdot k$, and $\dim\spn(x_1,\dots,x_k)\geq \dim \spn(x_i\mid i\in I)=\vert I\vert=k$ (here, we used condition (i) in Definition \ref{defi-admissible}). Hence the statements in the claim are satisfied in the case $I=[k]$.

Now let us assume that $I\neq [k]$. Then there is at least one vector $x_j$ with $j\in [k]\sm I$, and so from (\ref{eq-expression-omega}) we obtain
\[(k+1)\cdot |I|+1\leq \omega(x_1,\dots,x_k)\leq (k+1)\cdot |I|+k.\]
This in particular implies $ \lfloor\omega(x_1,\dots,x_k)/(k+1)\rfloor=|I|=|\mathcal{I}(x_1,\dots,x_k)|$. It furthermore implies that $\omega(x_1,\dots,x_k)$ is not divisible by $k+1$ and hence $\omega(x_1,\dots,x_k)\not\in \{0,(k+1),2\cdot (k+1),\dots,(k-1)\cdot (k+1)\}$. 

Finally, recall from condition (i) in Definition \ref{defi-admissible} that the vectors $x_i$ for $i\in I$ are linearly independent. Furthermore, there is some $j\in [k]\sm I$ and by condition (ii) in Definition~\ref{defi-admissible} we have $x_j\not\in \spn(x_i\mid i\in I)$ . Thus, $\dim\spn(x_1,\dots,x_k)\geq \vert I\vert+1\geq \omega(x_1,\dots,x_k)/(k+1)$.
\end{proof}

The following proposition is similar to Theorem \ref{theo-rank}, but instead of finding a solution $(x_1,\dots,x_k)\in A^k$ to ($\star$) with $\dim\spn(x_1,\dots,x_k)\geq r$, it aims to find a solution $(x_1,\dots,x_k)\in A^k$ with large weight $\omega(x_1,\dots,x_k)$. We will prove Proposition \ref{propo-weight} by induction on $w$, and we will deduce Theorem \ref{theo-rank} by considering $w=(r-1)(k+1)-1$. Recall that we are assuming $a_{j,1}+\dots+a_{j,k}=0$ for $j=1,\dots,m$.

\begin{proposition}\label{propo-weight}
Fix a non-negative integer $w$ and assume that $k\geq 2m+1+\lfloor w/(k+1)\rfloor$. Then there exist constants $C\geq 1$ and $0<c\leq 1$ such that the following holds: For any non-negative integer $n$ and any subset $A\su \Fpn\sm\{0\}$ of size $|A|> C\cdot p^{(1-c)n}$, the system  ($\star$) has a solution $(x_1,\dots,x_k)\in A^k$ with weight $\omega(x_1,\dots,x_k)> w$.
\end{proposition}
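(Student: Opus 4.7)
The plan is to induct on $w$.

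For the \textbf{base case} $w=0$, take $C=1$ and $c=1$: any $A\subseteq\Fpn\setminus\{0\}$ with $|A|>1$ contains some $x$, and the constant tuple $(x,\ldots,x)$ solves $(\star)$ (since $\sum_i a_{j,i}=0$), with admissible set $I=\emptyset$ of weight $1$, so $\omega(x,\ldots,x)>0$.

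For the \textbf{inductive step}, fix $w\ge 1$ and suppose the proposition holds for $w-1$ with constants $C'\ge 1$ and $0<c'\le 1$. Set $s=\lfloor w/(k+1)\rfloor$ and $N=w-s(k+1)\in\{0,1,\ldots,k\}$; the current hypothesis $k\ge 2m+1+s$ implies the corresponding hypothesis for the IH. In the \emph{easy case} $N=0$, we have $w\in\{k+1,2(k+1),\ldots,(k-1)(k+1)\}$, and Claim \ref{claim-properties-weight} forbids any solution from having weight exactly $w$. Taking $C=C'$ and $c=c'$, the IH produces a solution with $\omega\ge w$, and then $\omega\neq w$ forces $\omega\ge w+1$.

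In the \emph{hard case} $N\ge 1$, argue by contradiction: suppose $A$ of size $|A|>Cp^{(1-c)n}$ contains no solution of weight $>w$. Every weight-$w$ solution $(x_1,\ldots,x_k)\in A^k$ is then witnessed by a size-$s$ admissible set $I\subseteq[k]$: writing $U=\spn(x_i\mid i\in I)$, the projections $\proj_{\Fpn/U}(x_j)$ for $j\in[k]\setminus I$ are nonzero and span $N$ distinct lines, partitioning $[k]\setminus I$ into $N$ \emph{tied groups}. Adapting the strategy of Section \ref{sect-proof-distinct}, I declare an $(s+1)$-subset $I^+\subseteq[k]$ together with a linearly independent tuple $(y_i\mid i\in I^+)\in A^{I^+}$ to be \emph{$w$-critical} if it can be extended to a weight-$w$ solution $(x_1,\ldots,x_k)\in A^k$ so that, after deleting a suitable index $h\in I^+$, the remaining $(y_i\mid i\in I^+\setminus\{h\})$ is a witnessing admissible set and $y_h$ shares a tied group with some $j\in[k]\setminus I^+$. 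I bound the total number of $w$-critical tuples by summing over the finitely many types (admissible $I$ together with a tied-group partition $\pi$ of $[k]\setminus I$) and, within each type, over the at most $|A|^s\le p^{sn}$ choices of $(y_i\mid i\in I)\in A^I$. For each fixed type and each $(y_i\mid i\in I)$, Corollary \ref{coro-slice-rank-partitioned tensor} bounds the number of weight-$w$ extensions: the tied-group partition $\pi$, after merging singleton tied groups with one another or with indices from $I$ when necessary, furnishes a partition $[k]=J_1\cup\cdots\cup J_t$ with $|J_h|\ge 2$ whose hypothesis is verified because any mixed solution disagreeing within some $J_h$ would either create a new distinct span in $\Fpn/U$ (raising $N$) or enlarge the linearly independent family $(x_i\mid i\in I)$ (raising $s$), producing a solution of weight $>w$ and violating our contradiction hypothesis. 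This yields at most $k\cdot\Gamma_{p,m,k}^n$ extensions per type and per $(y_i\mid i\in I)$, so the total count of $w$-critical tuples is $O(p^{(s+1)n-\eta n})$ for some $\eta>0$ depending only on $p,m,k$.

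Now perform random subspace sampling in dimension $d=\lfloor(1-c)n/(s+1)\rfloor$. By Lemma \ref{lemma-subspace-probability}, $\EE[|A\cap V|]\gtrsim p^{d-n}|A|$, while the expected number of $w$-critical tuples surviving (all $y_i\in V$) is $O(p^{(s+1)(d-n)}\cdot p^{(s+1)n-\eta n})=O(p^{(s+1)d-\eta n})$. Choosing $c$ appropriately in terms of $c'$, $s$, $\eta$ (in the spirit of equation (\ref{eq-new-old-c-distinct})), fix an outcome for $V$ where we can delete one vector per surviving $w$-critical tuple to obtain $A^*\subseteq A\cap V$ with $|A^*|>C'p^{(1-c')d}$ containing no $w$-critical tuple. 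Viewing $A^*$ as a subset of $\Fp^d$ via $V\cong\Fp^d$, the IH for $w-1$ provides a solution with $\omega\ge w$; the contradiction hypothesis forces this weight to be exactly $w$, and the structure of such a solution (size-$s$ admissible $I$ plus a tied group of size $\ge 2$ supplied by construction) yields a $w$-critical $(s+1)$-tuple contained in $A^*$, contradicting the deletion step.

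The \textbf{main obstacle} is engineering the definition of $w$-critical and the associated partition $[k]=J_1\cup\cdots\cup J_t$ so that Corollary \ref{coro-slice-rank-partitioned tensor} applies (every $|J_h|\ge 2$) \emph{and} so that every weight-$w$ solution necessarily exhibits a $w$-critical tuple. When $s$ is small or many tied groups are singletons, producing a valid size-$\ge 2$ partition forces merging tied groups with one another or with indices of $I$, and one must check in each resulting sub-case that disagreement within the merged parts still forces a \emph{strict} weight increase (rather than merely violating the chosen tied-group structure). This sub-case analysis, driven by $s$, $N$, and the internal sizes of the tied groups, is the technical heart of the argument; the surrounding probabilistic sampling and inductive constant-balancing is a direct analogue of the bookkeeping in Section \ref{sect-proof-distinct}.
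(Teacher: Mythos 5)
Your base case and the ``easy case'' when $k+1$ divides $w$ are correct and match the paper exactly, as is the high-level template (induct on $w$, sample a random subspace, delete one vector per offending configuration, apply the induction hypothesis inside $V\cong\Fp^d$). The hard case, however, has genuine gaps.

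\textbf{The singleton tied-group worry is a red herring, and it signals a missing structural lemma.} You identify as the ``technical heart'' the problem of producing a partition with all $|J_h|\geq 2$ when tied groups are singletons, and propose merging singletons with each other or with indices in $I$. But for the \emph{maximum-weight} admissible set $I=\mathcal{I}(x_1,\dots,x_k)$, every tied group of $[k]\sm I$ already has size $\geq 2$: a singleton tied group at $j$ would make $I\cup\{j\}$ admissible with strictly larger weight, contradicting the maximality in Definition \ref{defi-weight-solution}. This is precisely the paper's Lemma \ref{lemma-structure-partition-exists}. So no merging is needed, and merging with indices of $I$ (whose vectors are fixed across the whole list of solutions) would not fit the framework of Corollary \ref{coro-slice-rank-partitioned tensor}, which the paper applies to the reduced system in $k-|I|$ variables.

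\textbf{The hypothesis of Corollary \ref{coro-slice-rank-partitioned tensor} does not follow without a disjointness restriction.} You assert that a mixed solution disagreeing within some $J_h$ ``would either create a new distinct span\ldots or enlarge the linearly independent family\ldots producing a solution of weight $>w$.'' This is not automatic. If two of the listed solutions happen to produce the same span modulo $U$ at some coordinate, mixing them within a $J_h$ need not enlarge the span set, and the weight argument collapses. The paper therefore first extracts a \emph{maximal sub-collection of solutions whose span-sets $\{\spn(\proj_{\Fpn/U}(x_j^{(\l)}))\mid j\in[k]\sm I\}$ are pairwise disjoint} (Lemma \ref{lemma-few-disjoint-solutions}); only under this disjointness does disagreement at a $j\in J_h$ really force an extra distinct span, and hence a weight exceeding $w$ (Claim \ref{claim-cross-solution-sat-condition}). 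Having bounded the size of such a maximal disjoint family, a separate combinatorial step (Claim \ref{claim-index-t-set-S}) shows any other weight-$w$ solution with the same $I$ and fixed $(x_i\mid i\in I)$ has some coordinate $x_t$ confined to a union of few low-dimensional subspaces determined by the family, and this is what yields a useful bound on the total count. Your sketch contains neither the disjointness extraction nor the confinement argument.

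\textbf{The count of $w$-critical tuples is unjustified, and the witness-tuple framework is an unnecessary detour.} Your extra index $h\in I^+$ is an arbitrary coordinate of the weight-$w$ extension rather than the single constrained coordinate $t$ from Claim \ref{claim-index-t-set-S}, so nothing forces $y_h$ into a small set; the asserted bound $O(p^{(s+1)n-\eta n})$ has no visible source, and even granted, the exponent bookkeeping forces $\eta$ to be large while the slice-rank saving $\log_p(p/\Gamma)$ need not be. The paper avoids all of this by not introducing witness tuples at all: it counts full weight-$w$ solutions directly (Corollary \ref{coro-number-solutions-total} gives $\leq (2k)^{2k}p^{rk}\Gamma^n|A|^{r-1}$ such solutions of rank $r$), chooses the sampling dimension $d$ so that $p^d\asymp(p/\Gamma)^{n/(k-1)}$ rather than $d\approx(1-c)n/(s+1)$, and deletes one vector per surviving solution. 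The ``$I$-interesting tuple'' machinery of Section \ref{sect-proof-distinct} works there because the nonsingularity of $m\times m$ minors bounds the $I$-tuples given their linear images under the system; that ingredient has no analogue in the present proposition, and your adaptation does not replace it.
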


Let us now deduce Theorem \ref{theo-rank} from Proposition \ref{propo-weight}.

\begin{proof}[Proof of Theorem \ref{theo-rank}]
As in the theorem statement, let $r\geq 2$ and assume that $k\geq 2m+r-1$. Now, let $w=(r-1)(k+1)-1$, and note that then $\lfloor w/(k+1)\rfloor=r-2$ and therefore $k\geq 2m+1+r-2=2m+1+\lfloor w/(k+1)\rfloor$. Let us define $C_{p,m,k,r}^{\textnormal{rank}}=C+1$ and $\Gamma_{p,m,k,r}^{\textnormal{rank}}=p^{1-c}<p$, where $C\geq 1$ and $0<c\leq 1$ are the constants obtained from Proposition \ref{propo-weight}.

Now, let $n$ be a non-negative integer and let $A\su \Fpn$ be a subset of size $|A|> C_{p,m,k,r}^{\textnormal{rank}}\cdot (\Gamma_{p,m,k,r}^{\textnormal{rank}})^n$. Then we have
\[|A\sm\{0\}|\geq |A|-1> (C_{p,m,k,r}^{\textnormal{rank}}-1)\cdot (\Gamma_{p,m,k,r}^{\textnormal{rank}})^n=C\cdot p^{(1-c)n}.\]
Thus, by Proposition \ref{propo-weight} applied to $A\sm\{0\}$, there exists a solution $(x_1,\dots,x_k)\in (A\sm\{0\})^k\su A^k$ to the system ($\star$) with weight $\omega(x_1,\dots,x_k)> w=(r-1)(k+1)-1$, Note that this means that $\omega(x_1,\dots,x_k)\geq (r-1)(k+1)$. However, as $2\leq r\leq k-1$, the first statement in Claim \ref{claim-properties-weight} implies that $\omega(x_1,\dots,x_k)\neq (r-1)(k+1)$. Hence $\omega(x_1,\dots,x_k)> (r-1)(k+1)$ and consequently by the last statement in Claim \ref{claim-properties-weight} we have $\dim\spn(x_1,\dots,x_k)\geq \omega(x_1,\dots,x_k)/(k+1)>r-1$. This means that $\dim\spn(x_1,\dots,x_k)\geq r$, as desired.
\end{proof}

We will prove Proposition \ref{propo-weight} in the following subsection. In the proof, we will use the following lemma, giving a structural property for the spans $\spn(\proj_{V/U}(x_j))$ appearing in Definition \ref{defi-weight-set}.

\begin{lemma}\label{lemma-structure-partition-exists}
Let $V$ be a vector space over $\Fp$, and let $(x_1,\dots,x_k)\in (V\sm \{0\})^k$ be a solution to the system ($\star$). Furthermore, let $I=\mathcal{I}(x_1,\dots,x_k)$ and $U=\spn(x_i\mid i\in I)$. Then there exists a partition $[k]\sm I=J_1\cup \dots\cup J_t$ with $|J_h|\geq 2$ for $h=1,\dots,t$ and distinct one-dimensional subspaces $W_1,\dots,W_t\su V/U$ such that $\spn(\proj_{V/U}(x_j))=W_h$ for all $h=1,\dots,t$  and all $j\in J_h$.
\end{lemma}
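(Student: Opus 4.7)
The plan is to define the partition via a natural equivalence relation on $[k]\setminus I$ and then use the maximality of $I=\mathcal{I}(x_1,\dots,x_k)$ to rule out singleton blocks. First, note that for every $j\in [k]\setminus I$, admissibility condition (ii) of Definition~\ref{defi-admissible} gives $x_j\notin U$, so $\proj_{V/U}(x_j)\neq 0$ and $\spn(\proj_{V/U}(x_j))$ is a genuine $1$-dimensional subspace of $V/U$. Declare $j\sim j'$ iff these spans coincide; the resulting equivalence classes yield a partition $[k]\setminus I=J_1\cup\dots\cup J_t$ with associated pairwise distinct $1$-dimensional subspaces $W_1,\dots,W_t\subseteq V/U$.

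The only nontrivial point is to show $|J_h|\geq 2$ for every $h$. I would argue by contradiction: suppose $J_h=\{j\}$ is a singleton and set $I'=I\cup\{j\}$ and $U'=\spn(x_i\mid i\in I')=U+\spn(x_j)$. The claim is that $I'$ is admissible for $(x_1,\dots,x_k)$ and has strictly greater weight than $I$, contradicting the choice of $I$.

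For admissibility of $I'$, condition (i) is immediate: $x_j\notin U$ combined with linear independence of $\{x_i\mid i\in I\}$ yields linear independence of $\{x_i\mid i\in I'\}$. For condition (ii), suppose some $j'\in [k]\setminus I'$ satisfies $x_{j'}\in U'$; then $\proj_{V/U}(x_{j'})$ lies in $\spn(\proj_{V/U}(x_j))=W_h$ and is nonzero (since $x_{j'}\notin U$ by admissibility of $I$), so because $W_h$ is $1$-dimensional we get $\spn(\proj_{V/U}(x_{j'}))=W_h$, forcing $j'\in J_h$ and contradicting $J_h=\{j\}$. For the weight comparison, the weight of $I'$ is at least $(k+1)(|I|+1)$, while the weight of $I$ is at most $(k+1)|I|+k$ (since the second summand in Definition~\ref{defi-weight-set} is bounded by $|[k]\setminus I|\leq k$). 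The extra factor of $k+1$ then beats the bounded second term, giving the strict inequality.

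The main obstacle I anticipate is checking condition (ii) for $I'$: it is precisely there that one must use the singletonhood of $J_h$ to prevent some other $x_{j'}$ from landing in the enlarged span $U'$. Once that step is cleared, the weight inequality is essentially bookkeeping driven by the large multiplier $k+1$ in the definition of weight, which was designed specifically so that absorbing one index into $I$ outweighs any possible loss in the second summand.
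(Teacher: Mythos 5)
Your proposal is correct and follows essentially the same approach as the paper: partition $[k]\setminus I$ by the equivalence relation of having equal spans of projections, then rule out singleton blocks by showing that enlarging $I$ to $I\cup\{j\}$ would remain admissible (with the same check of condition (ii)) and have strictly greater weight because the $(k+1)$-multiplier beats the second summand, contradicting the maximality defining $\mathcal{I}(x_1,\dots,x_k)$. The only cosmetic difference is that the paper phrases the admissibility of $I\cup\{j\}$ directly rather than by contradiction, but the underlying argument is identical.
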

\begin{proof}
Recall that the set $I=\mathcal{I}(x_1,\dots,x_k)$ is admissible for $(x_1,\dots,x_k)$, and that therefore by condition (ii) in Definition \ref{defi-admissible} we have $x_j\not\in \spn(x_i\mid i\in I)=U$ for all $j\in [k]\sm I$. Hence for every $j\in [k]\sm I$, the projection $\proj_{V/U}(x_j)$ is a non-zero vector in $V/U$ and therefore $\spn(\proj_{V/U}(x_j))$ is a one-dimensional subspace of $V/U$. 

Let $W_1,\dots,W_t\su V/U$ be the list of all distinct one-dimensional subspaces of $V/U$ that are of the form $\spn(\proj_{V/U}(x_j))$ for some $j\in [k]\sm I$. Furthermore, for each $h=1,\dots,t$, let $J_h\su [k]\sm I$ be the set of indices $j\in [k]\sm I$ such that $\spn(\proj_{V/U}(x_j))=W_h$. Then $[k]\sm I=J_1\cup \dots\cup J_t$ is a partition.

It remains to show that we have $|J_h|\geq 2$ for $h=1,\dots,t$. Note that this is equivalent to showing that for each $j\in [k]\sm I$ there exists some $j'\in [k]\sm I$ with $j'\neq j$ and $\spn(\proj_{V/U}(x_j))=\spn(\proj_{V/U}(x_{j'}))$.

So let us fix some $j\in [k]\sm I$, and suppose for contradiction that $\spn(\proj_{V/U}(x_{j'}))\neq \spn(\proj_{V/U}(x_{j}))$ for all $j'\in [k]\sm (I\cup \{j\})$. As both $\spn(\proj_{V/U}(x_{j'}))$ and $\spn(\proj_{V/U}(x_{j}))$ are one-dimensional subspaces of $V/U$, this means that $\proj_{V/U}(x_{j'})\not\in \spn(\proj_{V/U}(x_{j}))$ for all $j'\in [k]\sm (I\cup \{j\})$. In other words, we have $x_{j'}\not\in U+\spn(x_j)=\spn(x_i\mid i\in I\cup \{j\})$ for all $j'\in [k]\sm (I\cup \{j\})$. Hence the set $I\cup \{j\}$ satisfies condition (ii) in Definition \ref{defi-admissible}.

Furthermore, recall that the vectors $x_i$ for $i\in I$ are linearly independent by condition (i) in Definition  \ref{defi-admissible}. Now, $x_j\not\in U=\spn(x_i\mid i\in I)$ implies that $x_j$ is also linearly independent from these vectors. In other words, the vectors $x_i$ for $i\in I\cup \{j\}$ are linearly independent and the set $I\cup \{j\}$ satisfied condition (i) in Definition \ref{defi-admissible}. Thus, we can conclude that the set $I\cup \{j\}$ is admissible for $(x_1,\dots,x_k)$.

Let us now compare the weights of the admissible sets $I$ and $I\cup \{j\}$ with respect to $(x_1,\dots,x_k)$. Since $I=\mathcal{I}(x_1,\dots,x_k)$ must have the maximum weight among all admissible sets with respect to $(x_1,\dots,x_k)$, the weight of $I\cup \{j\}$ can be at most as large as the weight of $I$. However, the weight of $I$ is
\[(k+1)\cdot |I|+\left|\left\{\spn(\proj_{V/U}(x_j))\,\Big|\, j\in [k]\sm I\right\}\right|\leq (k+1)\cdot |I|+k<(k+1)\cdot |I\cup \{j\}|,\]
which means that the weight of $I\cup \{j\}$ is actually larger than the weight of $I$. This is a contradiction.
\end{proof}

\subsection{Proof of Proposition \ref{propo-weight}}

Let us now prove Proposition \ref{propo-weight} by induction on $w$.

For the base case $w=0$, we can take $C=1$ and $c=1$. Then for any subset $A\su \Fpn\sm\{0\}$ of size $\vert A\vert>C\cdot p^{(1-c)n}=1$, we can pick some vector $x\in A$ and consider the $k$-tuple $(x_1,\dots,x_k)=(x,\dots,x)\in A^k$. Note that by our assumption $a_{j,1}+\dots+a_{j,k}=0$ for $j=1,\dots,m$ made at the beginning of this section, this $k$-tuple $(x_1,\dots,x_k)$ is a solution to the system ($\star$). Furthermore, by the first statement in Claim \ref{claim-properties-weight} we have $\omega(x_1,\dots,x_k)\neq 0$. Hence  $\omega(x_1,\dots,x_k)> 0$ and we have proved the base case $w=0$.

Now let us assume that $w\geq 1$ and that Proposition \ref{propo-weight} holds for $w-1$ with constants $C'\geq 1$ and $0<c'\leq 1$. Also recall that we made the assumption $k\geq 2m+1+\lfloor w/(k+1)\rfloor$ in the statement of Proposition \ref{propo-weight}.

First, consider the case that $w\in \{0,(k+1),2\cdot (k+1),\dots,(k-1)\cdot (k+1)\}$. In this case, we can take $C=C'$ and $c=c'$. Indeed, for any subset $A\su \Fpn\sm\{0\}$ of size $|A|> C\cdot p^{(1-c)n}=C'\cdot p^{(1-c')n}$, by the induction hypothesis the system  ($\star$) has a solution $(x_1,\dots,x_k)\in A^k$ with weight $\omega(x_1,\dots,x_k)> w-1$. Since $\omega(x_1,\dots,x_k)\not\in \{0,(k+1),2\cdot (k+1),\dots,(k-1)\cdot (k+1)\}$ by Claim \ref{claim-properties-weight}, we have $\omega(x_1,\dots,x_k)\neq w$ and therefore $\omega(x_1,\dots,x_k)> w$. This shows the desired statement in Proposition \ref{propo-weight} for $w$.

We may therefore from now on assume that $w\not\in \{0,(k+1),2\cdot (k+1),\dots,(k-1)\cdot (k+1)\}$. By the assumption $k\geq 2m+1+\lfloor w/(k+1)\rfloor$, we have $w<k\cdot (k+1)$. Hence $w$ is not divisible by $k+1$.

Since $k\geq 2m+1+\lfloor w/(k+1)\rfloor$, we have $k-\lfloor w/(k+1)\rfloor\geq 2m+1$, so the constant $\Gamma_{p,m,k-\lfloor w/(k+1)\rfloor}<p$ considered in Section \ref{sect-background} is well-defined. Let us write $\Gamma=\Gamma_{p,m,k-\lfloor w/(k+1)\rfloor}$, then $1\leq \Gamma<p$.

Now, $p/\Gamma>1$ and $c'>0$, so there is some $c>0$ such that $p^c=(p/\Gamma)^{c'/(k-1)}$. Note that $c\leq 1$ since $c'\leq 1$ and $\Gamma\geq 1$. So $0<c\leq 1$, as desired.

We need to prove that there exists a constant $C\geq 1$ such that for any non-negative integer $n$ and any subset $A\su \Fpn\sm\{0\}$ of size $|A|> C\cdot p^{(1-c)n}$, the system  ($\star$) has a solution $(x_1,\dots,x_k)\in A^k$ with weight $\omega(x_1,\dots,x_k)> w$.

Note that by making the constant $C$ large enough, we may assume that $n$ is sufficiently large such that $(p/\Gamma)^n>(2kp^2)^{(2k+1)(k-1)}$.

Hence it suffices to prove that for any $n$ with $(p/\Gamma)^n>(2kp^2)^{(2k+1)(k-1)}$ and any subset $A\su \Fpn\sm\{0\}$ of size $|A|>4C'(2kp)^{2k+1}\cdot p^{(1-c)n}$, there is a solution $(x_1,\dots,x_k)\in A^k$ to  ($\star$) with $\omega(x_1,\dots,x_k)> w$.

So let us assume for contradiction that $(p/\Gamma)^n>(2kp^2)^{(2k+1)(k-1)}$ and $|A|>4C'(2kp)^{2k+1}\cdot p^{(1-c)n}$ and that every solution $(x_1,\dots,x_k)\in A^k$ the system ($\star$) of has weight $\omega(x_1,\dots,x_k)\leq w$.

Since $p^c=(p/\Gamma)^{c'/(k-1)}$ by the definition of $c$, we have
\begin{equation}\label{eq-size-A}
|A|>4C'(2kp)^{2k+1}\cdot p^{(1-c)n}=4C'(2kp)^{2k+1}\cdot p^n\cdot \left(\frac{\Gamma}{p}\right)^{c'n/(k-1)}
\end{equation}
Using that $0<c'\leq 1$ and $\Gamma<p$ and $C'\geq 1$, this furthermore implies
\begin{equation}\label{eq-size-A-vs-Gamma}
|A|>4C'(2kp)^{2k+1}\cdot p^n\cdot \left(\frac{\Gamma}{p}\right)^{c'n/(k-1)}\geq 4C'(2kp)^{2k+1}\cdot p^n\cdot \left(\frac{\Gamma}{p}\right)^{n}\geq (2kp)^{2k+1}\cdot \Gamma^n.
\end{equation}

Our proof strategy is somewhat similar to the proof of Theorem \ref{theo-distinct-induction} in Section \ref{sect-proof-distinct}, again using a probabilistic subspace sampling argument. We will again consider a random subspace $V\su \Fpn$ of a suitably chosen dimension. This time, our goal is to find a fairly large subset $A^*\su A\cap V$ such that there are no solutions $(x_1,\dots,x_k)\in (A^*)^k$ to the system ($\star$) of weight $\omega(x_1,\dots,x_k)=w$. By our assumption on $A$, this means that every solution  $(x_1,\dots,x_k)\in (A^*)^k$ to the system ($\star$) must satisfy $\omega(x_1,\dots,x_k)\leq w-1$. We will then obtain a contradiction by applying the induction hypothesis for $w-1$ to the set $A^*$.

In order to be able to obtain the desired set $A^*\su A\cap V$ not containing any solutions $(x_1,\dots,x_k)\in (A^*)^k$ to ($\star$) with $\omega(x_1,\dots,x_k)=w$, we will bound the total number of solutions $(x_1,\dots,x_k)\in A^k$ to ($\star$) in the set $A$ with $\omega(x_1,\dots,x_k)=w$. More precisely, since the relevant probabilities for the subspace sampling argument depend on the dimension $\dim\spn(x_1,\dots,x_k)$ for each solution $(x_1,\dots,x_k)\in A^k$, we will actually bound the number of solutions $(x_1,\dots,x_k)\in A^k$ to ($\star$) with $\omega(x_1,\dots,x_k)=w$ and $\dim\spn(x_1,\dots,x_k)=r$ for every possible value of this dimension $r$. The next claim analyzes these possible values.

\begin{claim}\label{claim-possible-dimensions}
For every solution $(x_1,\dots,x_k)\in A^k$ to the system ($\star$) of weight $\omega(x_1,\dots,x_k)=w$, we have $|\mathcal{I}(x_1,\dots,x_k)|=\lfloor w/(k+1)\rfloor$ and $\lfloor w/(k+1)\rfloor+1\leq \dim\spn(x_1,\dots,x_k)\leq k$.
\end{claim}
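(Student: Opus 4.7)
The plan is short: this claim is essentially a direct consequence of Claim \ref{claim-properties-weight} together with the divisibility observation already made in the discussion just before the claim. Since $A\su\Fpn\sm\{0\}$, any solution $(x_1,\dots,x_k)\in A^k$ to ($\star$) lies in $(\Fpn\sm\{0\})^k$, so Claim \ref{claim-properties-weight} applies with $V=\Fpn$.

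First I would invoke the middle statement of Claim \ref{claim-properties-weight}: since $\omega(x_1,\dots,x_k)=w$, this immediately gives $|\mathcal{I}(x_1,\dots,x_k)|=\lfloor w/(k+1)\rfloor$, settling the equality. For the lower bound on the dimension, the last statement of Claim \ref{claim-properties-weight} yields $\dim\spn(x_1,\dots,x_k)\geq w/(k+1)$. The key point is that in the paragraph immediately preceding the claim, we have reduced to the case $w\notin\{0,k+1,2(k+1),\dots,(k-1)(k+1)\}$, and the standing hypothesis $k\geq 2m+1+\lfloor w/(k+1)\rfloor$ (combined with $m\geq 1$) forces $w<k(k+1)$; hence $w$ is not divisible by $k+1$. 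Consequently $w/(k+1)$ lies strictly between $\lfloor w/(k+1)\rfloor$ and $\lfloor w/(k+1)\rfloor+1$, and since the dimension is an integer, the inequality $\dim\spn(x_1,\dots,x_k)\geq w/(k+1)$ upgrades to $\dim\spn(x_1,\dots,x_k)\geq \lfloor w/(k+1)\rfloor+1$.

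Finally, the upper bound $\dim\spn(x_1,\dots,x_k)\leq k$ is immediate, as $x_1,\dots,x_k$ is a list of only $k$ vectors. There is no real obstacle in the argument; the whole claim is essentially a bookkeeping consequence of the properties of the weight function once one invokes the non-divisibility of $w$ by $k+1$ established in the preceding reduction.
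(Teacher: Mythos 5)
Your proof is correct and follows essentially the same route as the paper: invoke the middle and last statements of Claim \ref{claim-properties-weight}, then upgrade the lower bound $\dim\spn(x_1,\dots,x_k)\geq w/(k+1)$ to $\lfloor w/(k+1)\rfloor+1$ using the previously established non-divisibility of $w$ by $k+1$ and integrality of the dimension.
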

\begin{proof}
Fix a solution $(x_1,\dots,x_k)\in A^k$ to ($\star$) with $\omega(x_1,\dots,x_k)=w$. By  Claim \ref{claim-properties-weight} we have $|\mathcal{I}(x_1,\dots,x_k)|= \lfloor\omega(x_1,\dots,x_k)/(k+1)\rfloor=\lfloor w/(k+1)\rfloor$.

For the second part of the claim, the upper bound $\dim\spn(x_1,\dots,x_k)\leq k$ is clear. For the lower bound, note that by Claim \ref{claim-properties-weight} we have
\[\dim\spn(x_1,\dots,x_k)\geq \omega(x_1,\dots,x_k)/(k+1)=w/(k+1)>\lfloor w/(k+1)\rfloor,\]
where in the last step we used that $w$ is not divisible by $k+1$. Hence  we must have $\dim\spn(x_1,\dots,x_k)\geq \lfloor w/(k+1)\rfloor+1$, as desired.
\end{proof}

We remark that one can actually obtain a stronger upper bound for $\dim\spn(x_1,\dots,x_k)$ than the bound in the claim above by taking into account the linear relations imposed by the system ($\star$). However, the trivial upper bound $\dim\spn(x_1,\dots,x_k)\leq k$ suffices for our argument.

In light of Claim \ref{claim-possible-dimensions}, for every $r=\lfloor w/(k+1)\rfloor+1,\dots,k$, we need to bound the number of solutions $(x_1,\dots,x_k)\in A^k$ to ($\star$) with $\omega(x_1,\dots,x_k)=w$ and $\dim\spn(x_1,\dots,x_k)=r$. We will count these solutions by distinguishing all possibilities for the set $\mathcal{I}(x_1,\dots,x_k)$, noting that by Claim \ref{claim-possible-dimensions}, $\mathcal{I}(x_1,\dots,x_k)\su [k]$ is always a subset of size $\lfloor w/(k+1)\rfloor$.

The following lemma is the key step in order to obtain the desired bound. It gives a structural property for solutions $(x_1,\dots,x_k)\in A^k$ to ($\star$) with $\omega(x_1,\dots,x_k)=w$ where  $\mathcal{I}(x_1,\dots,x_k)=I$ for some fixed set $I\su [k]$ (of size $|I|=\lfloor w/(k+1)\rfloor$) and where the vectors $x_i\in A$ for $i\in I$ are fixed. In the proof of this lemma, we will use the slice rank polynomial method in the form of Corollary \ref{coro-slice-rank-partitioned tensor}. Recall that we defined $\Gamma=\Gamma_{p,m,k-\lfloor w/(k+1)\rfloor}<p$.

\begin{lemma}\label{lemma-few-disjoint-solutions}
Fix a subset $I\su [k]$ of size $|I|=\lfloor w/(k+1)\rfloor$, and fix vectors $x_i\in A$ for $i\in I$. Let $U=\spn(x_i\mid i\in I)$. Suppose that $(x_1^{(\l)},\dots,x_k^{(\l)})\in A^k$ for $\l=1,\dots,L$ is a list of solutions to the system ($\star$) such that for all $\l=1,\dots,L$ we have $\omega(x_1^{(\l)},\dots,x_k^{(\l)})=w$ and $\mathcal{I}(x_1^{(\l)},\dots,x_k^{(\l)})=I$ and $x_i^{(\l)}=x_i$ for all $i\in I$. Furthermore, suppose that the sets $\{\spn(\proj_{\Fpn/U}(x_j^{(\l)}))\mid j\in [k]\sm I\}$ are disjoint for all $\l=1,\dots,L$. Then we must have $L\leq k^{k+1}\cdot \Gamma^n$.
\end{lemma}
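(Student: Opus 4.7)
The plan is to invoke Corollary \ref{coro-slice-rank-partitioned tensor} on the $(k-|I|)$-variable inhomogeneous system
\[\sum_{j \in [k]\setminus I} a_{s,j}\, y_j = b_s \qquad (s=1,\dots,m), \qquad b_s := -\sum_{i \in I} a_{s,i}\, x_i.\]
The assumption $k \geq 2m+1+\lfloor w/(k+1)\rfloor$ together with $|I|=\lfloor w/(k+1)\rfloor$ guarantees $k-|I| \geq 2m+1$, so that corollary applies with constant $\Gamma_{p,m,k-|I|}=\Gamma$. Moreover, a tuple $(x_j^{(\ell_j)})_{j\in[k]\setminus I}$ solves the reduced system iff the ``merged'' $k$-tuple $(y_1,\dots,y_k)\in A^k$ defined by $y_i=x_i$ for $i\in I$ and $y_j=x_j^{(\ell_j)}$ for $j\notin I$ is a solution of $(\star)$.

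To prepare the partition of the index set $[k]\setminus I$ demanded by Corollary \ref{coro-slice-rank-partitioned tensor}, I apply Lemma \ref{lemma-structure-partition-exists} to each $(x_1^{(\ell)},\dots,x_k^{(\ell)})$ with its admissible set $I$, obtaining a partition of $[k]\setminus I$ into parts of size $\geq 2$ together with associated distinct $1$-dimensional subspaces $W_h^{(\ell)}\subseteq\mathbb{F}_p^n/U$. The weight identity $\omega=(k+1)|I|+(\text{number of parts})$ together with $\omega=w$ and $|I|=\lfloor w/(k+1)\rfloor$ forces each $\ell$ to yield the same number of parts $t := w-(k+1)\lfloor w/(k+1)\rfloor$, and $t\geq 1$ since $w$ is not divisible by $k+1$. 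Because the number of set partitions of $[k]\setminus I$ into $t$ parts of size $\geq 2$ is at most $k^k$, a pigeonhole step produces an index subset $L^*\subseteq[L]$ with $|L^*|\geq L/k^k$ on which the unordered partition $\{J_1,\dots,J_t\}$ is a single common one.

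The key step is then to verify the hypothesis of Corollary \ref{coro-slice-rank-partitioned tensor} for this partition: for any $(\ell_j)_{j\in[k]\setminus I}\in (L^*)^{[k]\setminus I}$ whose merged $k$-tuple $(y_1,\dots,y_k)$ is a solution to $(\star)$, one has $|\{\ell_j : j\in J_h\}|=1$ for every $h$. First, $I$ remains admissible for $(y_1,\dots,y_k)$: condition (i) is inherited from the originals and condition (ii) follows from each $x_j^{(\ell_j)}\notin U$. Second, the disjointness hypothesis combined with the within-$\ell$ distinctness from Lemma \ref{lemma-structure-partition-exists} ensures that all subspaces $W_h^{(\ell)}$ are pairwise distinct as $(h,\ell)$ ranges over all pairs, so (writing $h(j)$ for the index with $j\in J_{h(j)}$) the weight of $I$ for $(y_1,\dots,y_k)$ equals
\[(k+1)|I|+\left|\left\{W_{h(j)}^{(\ell_j)} : j\in[k]\setminus I\right\}\right| = (k+1)|I|+\sum_{h=1}^t |\{\ell_j : j\in J_h\}|.\]
Since $(y_1,\dots,y_k)\in A^k$ is a solution of $(\star)$, the standing contradiction hypothesis gives $\omega(y_1,\dots,y_k)\leq w$. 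Comparing with the lower bound above yields $\sum_h |\{\ell_j : j\in J_h\}|\leq t$, and combined with the trivial $\sum_h |\{\ell_j : j\in J_h\}|\geq t$ this forces $|\{\ell_j : j\in J_h\}|=1$ for each $h$, as needed.

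Corollary \ref{coro-slice-rank-partitioned tensor} now gives $|L^*|\leq (k-|I|)\cdot\Gamma^n \leq k\cdot \Gamma^n$, and undoing the pigeonhole yields $L\leq k^k\cdot|L^*|\leq k^{k+1}\cdot\Gamma^n$, as required. The main conceptual obstacle is exactly the weight-based verification of the Corollary's hypothesis---it is here that the disjointness assumption is converted into the combinatorial structure (``$\ell_j$ is constant on each $J_h$'') required by the non-diagonal slice rank lower bound of Corollary \ref{coro-sawin-tao}, which is packaged into Corollary \ref{coro-slice-rank-partitioned tensor}.
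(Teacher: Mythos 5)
Your proof is correct and follows essentially the same route as the paper: apply Lemma \ref{lemma-structure-partition-exists} to each solution, pigeonhole to a common partition of $[k]\setminus I$ (losing a factor of at most $k^k$), observe via the weight identity and the lemma's disjointness hypothesis that all $W_h^{(\ell)}$ are pairwise distinct, and then verify the hypothesis of Corollary \ref{coro-slice-rank-partitioned tensor} for the reduced inhomogeneous system in $k-|I|\geq 2m+1$ variables by the same counting argument $t\geq\sum_h|\{\ell_j: j\in J_h\}|\geq t$. The only cosmetic difference is that you argue directly rather than by contradiction, and you fix $t=w-(k+1)|I|$ before the pigeonhole rather than after; neither changes the substance.
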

\begin{proof}
Suppose for contradiction that $L> k^{k+1}\cdot \Gamma^n$. For each $\l=1,\dots,L$ we can apply Lemma \ref{lemma-structure-partition-exists} to $(x_1^{(\l)},\dots,x_k^{(\l)})$ and obtain a partition $[k]\sm I=J_1^{(\l)}\cup \dots\cup J_{t_\l}^{(\l)}$ with $|J_h^{(\l)}|\geq 2$ for $h=1,\dots,t_\l$ and distinct one-dimensional subspaces $W_1^{(\l)},\dots,W_{t_\l}^{(\l)}\su \Fpn/U$ such that $\spn(\proj_{\Fpn/U}(x_j^{(\l)}))=W_h^{(\l)}$ for all $h=1,\dots,t_\l$  and all $j\in J_h^{(\l)}$. Note that by the condition $|J_h^{(\l)}|\geq 2$ for $h=1,\dots,t_\l$, the sets $J_h^{(\l)}$ are all non-empty and we have $t_\l\leq k$ for all $\l=1,\dots,L$.

Let us now distinguish all possibilities for the partitions $[k]\sm I=J_1^{(\l)}\cup \dots\cup J_{t_\l}^{(\l)}$. Since $t_\l\leq k$, there are at most $k^k$ possibilities for such a partition. Hence, as $L> k^{k+1}\cdot \Gamma^n$, for more than $k\cdot \Gamma^n$ different $\l$ we must obtain the same partition $[k]\sm I=J_1^{(\l)}\cup \dots\cup J_{t_\l}^{(\l)}$. In other words, there exists a partition $[k]\sm I=J_1\cup \dots\cup J_t$ which occurs for more than $k\cdot \Gamma^n$ different $\l$. By relabeling, we may assume that this partition occurs for $\l=1,\dots,L'$ for some $L'>k\cdot \Gamma^n$.

To summarize, we now have a fixed partition $[k]\sm I=J_1^{(\l)}\cup \dots\cup J_{t}^{(\l)}$ with $|J_h|\geq 2$ for $h=1,\dots,t$ and for each $\l=1,\dots, L'$ (where $L'>k\cdot \Gamma^n$) we have distinct one-dimensional subspaces $W_1^{(\l)},\dots,W_{t}^{(\l)}\su \Fpn/U$ such that $\spn(\proj_{\Fpn/U}(x_j^{(\l)}))=W_h^{(\l)}$ for all $h=1,\dots,t$  and all $j\in J_h$. Now, for each $\l=1,\dots,L'$ we have $\{\spn(\proj_{\Fpn/U}(x_j^{(\l)}))\mid j\in [k]\sm I\}=\{W_1^{(\l)},\dots,W_{t}^{(\l)}\}$ and by the assumptions in the lemma these sets are disjoint for all  $\l=1,\dots,L'$. This means that the subspaces $W_h^{(\l)}\su \Fpn/U$ are distinct for all $h=1,\dots,t$ and all $\l=1,\dots,L'$.

Recall that for every $\l=1,\dots,L'$ we have $\mathcal{I}(x_1^{(\l)},\dots,x_k^{(\l)})=I$. In particular, this means that the set $I$ is admissible for $(x_1^{(\l)},\dots,x_k^{(\l)})$. Hence, recalling that $x_i^{(\l)}=x_i$ for all $i\in I$, by condition (ii) in Definition \ref{defi-admissible} we have $x_j^{(\l)}\not\in \spn(x_i\mid i\in I)=U$ for all $j\in [k]\sm I$ and all $\l=1,\dots,L'$. Furthermore, using that $L'\geq 1$ as $L'>k\cdot \Gamma^n$, condition (i) in Definition \ref{defi-admissible} implies that the vectors $x_i$ for $i\in I$ are linearly independent.

For the moment, choose any $\l\in \{1,\dots,L'\}$ (using that $L'\geq 1$). Since we have $\mathcal{I}(x_1^{(\l)},\dots,x_k^{(\l)})=I$ and $\omega(x_1^{(\l)},\dots,x_k^{(\l)})=w$, the weight of the admissible set $I$ with respect to $(x_1^{(\l)},\dots,x_k^{(\l)})$ must be equal to $w$. On the other hand, this weight is
\[(k+1)\cdot |I|+\left|\left\{\spn(\proj_{\Fpn/U}(x_j^{(\l)}))\,\Big|\, j\in [k]\sm I\right\}\right|=(k+1)\cdot |I|+\left|\left\{W_1^{(\l)},\dots,W_{t}^{(\l)}\right\}\right|=(k+1)\cdot |I|+t.\]
Hence we can conclude that
\begin{equation}\label{eq-value-t-for-partition}
t=w-(k+1)\cdot |I|.
\end{equation}

For ease of notation, let us assume for the rest of the proof of this lemma that the set $I\su [k]$ consists of the last $\vert I\vert=\lfloor w/(k+1)\rfloor$ indices, i.e.\ $I=\{k-|I|+1,\dots,k\}$ (otherwise we can just relabel the indices). Given the fixed vectors  $x_i\in A$ for $i\in I=\{k-|I|+1,\dots,k\}$, we can now interpret ($\star$) as a (non-homogeneous) system of $m$ linear equations in the $k-\vert I\vert$ variables $x_1,\dots,x_{k-\vert I\vert}$. For each $\l=1,\dots,L'$ the $(k-\vert I\vert)$-tuple $(x_1^{(\l)},\dots,x_{k-\vert I\vert}^{(\l)})$ is a solution to this system of equations, since $(x_1^{(\l)},\dots,x_{k-\vert I\vert}^{(\l)},x_{k-|I|+1},\dots,x_k)=(x_1^{(\l)},\dots,x_k^{(\l)})$ is a solution to ($\star$) (recall the assumption that $x_i^{(\l)}=x_i$ for all $i\in I$).

Our goal is now to apply Corollary \ref{coro-slice-rank-partitioned tensor} to this system of equations and the solutions $(x_1^{(\l)},\dots,x_{k-\vert I\vert}^{(\l)})$ for $\l=1,\dots,L'$. The following claim states that the condition in the statement of Corollary \ref{coro-slice-rank-partitioned tensor} is satisfied.

\begin{claim}\label{claim-cross-solution-sat-condition}
We have $|\{ \l_j \mid j\in J_h\}|=1$ for $h=1,\dots,t$ for any choice of $\l_1,\dots,\l_{k-|I|}\in [L']$ such that $(x_1^{(\l_1)},\dots,x_{k-\vert I\vert}^{(\l_{k-|I|})})$ is a solution to ($\star$) when interpreted as a system in the first $k-\vert I\vert$ variables after fixing the given $x_i\in A$ for $i\in I=\{k-|I|+1,\dots,k\}$.
\end{claim}

Before proving this claim, let us first finish the rest of the proof of the lemma. Recall that $k-\vert I\vert=k-\lfloor w/(k+1)\rfloor\geq 2m+1$. We can therefore apply  Corollary \ref{coro-slice-rank-partitioned tensor} to the system of of $m$ linear equations in the $k-\vert I\vert$ variables $x_1,\dots,x_{k-\vert I\vert}$ obtained from ($\star$) after fixing the given $x_i\in A$ for $i\in I$. By Claim \ref{claim-cross-solution-sat-condition} the solutions $(x_1^{(\l)},\dots,x_{k-\vert I\vert}^{(\l)})$ for $\l=1,\dots,L'$ to this system and the partition $[k]\sm I=J_1\cup\dots\cup J_t$ satisfy the conditions in Corollary \ref{coro-slice-rank-partitioned tensor}. Hence the corollary implies that $L'\leq k\cdot \Gamma_{p,m,k-\lfloor w/(k+1)\rfloor}^n=k\cdot \Gamma^n$. This  contradicts the lower bound $L'>k\cdot \Gamma^n$ from above. This contradiction finishes the proof of the lemma, apart from proving Claim \ref{claim-cross-solution-sat-condition}.

\begin{proof}[Proof of Claim \ref{claim-cross-solution-sat-condition}]
Let us fix $\l_1,\dots,\l_{k-|I|}\in [L']$ such that $(x_1^{(\l_1)},\dots,x_{k-\vert I\vert}^{(\l_{k-|I|})})$ is a solution to this system, meaning that $(x_1^{(\l_1)},\dots,x_{k-\vert I\vert}^{(\l_{k-|I|})},x_{k-|I|+1},\dots,x_k)$ is a solution to the original system ($\star$). By our assumption on the set $A$, this solution $(x_1^{(\l_1)},\dots,x_{k-\vert I\vert}^{(\l_{k-|I|})},x_{k-|I|+1},\dots,x_k)\in A^k$ to ($\star$) must have weight
\begin{equation}\label{eq-weight-cross-solution}
\omega (x_1^{(\l_1)},\dots,x_{k-\vert I\vert}^{(\l_{k-|I|})},x_{k-|I|+1},\dots,x_k)\leq w.
\end{equation}
We claim that the set $I=\{k-|I|+1,\dots,k\}$ is admissible for $(x_1^{(\l_1)},\dots,x_{k-\vert I\vert}^{(\l_{k-|I|})},x_{k-|I|+1},\dots,x_k)$. Indeed, as shown above the vectors $x_i$ for $i\in I$ are linearly independent, so condition (i) in Definition \ref{defi-admissible} is satisfied. We also proved above that $x_j^{(\l)}\not\in \spn(x_i\mid i\in I)=U$ for all $j\in [k]\sm I$ and all $\l=1,\dots,L'$. In particular, $x_j^{(\l_j)}\not\in \spn(x_i\mid i\in I)$ for all $j\in [k]\sm I$ and so condition (ii) is satisfied as well. Hence the set $I=\{k-|I|+1,\dots,k\}$ is admissible for $(x_1^{(\l_1)},\dots,x_{k-\vert I\vert}^{(\l_{k-|I|})},x_{k-|I|+1},\dots,x_k)$.

Now, by (\ref{eq-weight-cross-solution}) the weight of the admissible set $I=\{k-|I|+1,\dots,k\}$ with respect to the solution $(x_1^{(\l_1)},\dots,x_{k-\vert I\vert}^{(\l_{k-|I|})},x_{k-|I|+1},\dots,x_k)$ to ($\star$)  is at most $w$. Hence
\[w\geq (k+1)\cdot |I|+\left|\left\{\spn(\proj_{\Fpn/U}(x_j^{(\l_j)}))\,\Big|\, j\in [k]\sm I\right\}\right|=(k+1)\cdot |I|+\left|\bigcup_{h=1}^{t}\left\{W_h^{(\l_j)}\,\big|\, j\in J_h\right\}\right|\]
Here, for the second step we used that $[k]\sm I=J_1\cup \dots\cup J_t$ is a partition and that $\spn(\proj_{\Fpn/U}(x_j^{(\l)}))=W_h^{(\l)}$ for all $h=1,\dots,t$, all $j\in J_h$ and all $\l=1,\dots,L'$. Together with (\ref{eq-value-t-for-partition}) this yields
\[t=w-(k+1)\cdot |I|\geq \left|\bigcup_{h=1}^{t}\left\{W_h^{(\l_j)}\,\big|\, j\in J_h\right\}\right|=\sum_{h=1}^{t}|\{\l_j\mid j\in J_h\}|,\]
where in the last step we used that the spaces $W_h^{(\l)}\su \Fpn/U$ are distinct for all $h=1,\dots,t$ and all $\l=1,\dots,L'$. Since the sets $J_h$ for $h=1,\dots,t$ are non-empty (as $|J_h|\geq 2$), we have $|\{\l_j\mid j\in J_h\}|\geq 1$ for $h=1,\dots,t$. Hence the previous inequality implies that we must have $|\{\l_j\mid j\in J_h\}|= 1$ for all $h=1,\dots,t$, as desired.
\end{proof}

This finishes the proof of Lemma \ref{lemma-few-disjoint-solutions}.
\end{proof}

Lemma \ref{lemma-few-disjoint-solutions} states that for a fixed subset $I\su [k]$ (of size $|I|=\lfloor w/(k+1)\rfloor$) and fixed vectors $x_i\in A$ for $i\in I$, there cannot be too many solutions $(x_1,\dots,x_k)\in A^k$ to  ($\star$) with $\omega(x_1,\dots,x_k)=w$ and $\mathcal{I}(x_1,\dots,x_k)=I$ such that the sets $\{\spn(\proj_{\Fpn/U}(x_j))\mid j\in [k]\sm I\}$ are disjoint for all of these solutions. We will now use this lemma to bound the total number of solutions $(x_1,\dots,x_k)\in A^k$ to  ($\star$) with $\omega(x_1,\dots,x_k)=w$ and $\mathcal{I}(x_1,\dots,x_k)=I$ for fixed $I$ and fixed $x_i\in A$ for $i\in I$ (more precisely, we bound the number of such solution with $\dim\spn(x_1,\dots,x_k)=r$ for each fixed $r$), as stated in the following lemma. Roughly speaking, the proof strategy for this lemma is to choose a maximal collection of solutions $(x_1',\dots,x_k')$ of the desired form for which the sets $\{\spn(\proj_{\Fpn/U}(x_j'))\mid j\in [k]\sm I\}$ are disjoint, and then to use that for every solution $(x_1,\dots,x_k)$ satisfying the desired conditions we must have $\proj_{\Fpn/U}(x_t)\in \{\spn(\proj_{\Fpn/U}(x_j'))\mid j\in [k]\sm I\}$ for some $t\in [k]\sm I$ and some $(x_1',\dots,x_k')$ in the chosen collection. This means that there are only relatively few possibilities for $x_t\in A$, and we will be able to derive the desired bound on the number of possible $(x_1,\dots,x_k)$.

\begin{lemma}\label{lemma-number-solutions-I-fixed}
Fix a subset $I\su [k]$ of size $|I|=\lfloor w/(k+1)\rfloor$, and fix vectors $x_i\in A$ for $i\in I$. Furthermore, fix an integer $r$ with $\lfloor w/(k+1)\rfloor+1\leq r\leq k$. Then the number of solutions $(x_1,\dots,x_k)\in A^k$ to  ($\star$) with $\omega(x_1,\dots,x_k)=w$ , $\mathcal{I}(x_1,\dots,x_k)=I$ and $\dim\spn(x_1,\dots,x_k)=r$ is at most $k^{k+3}2^kp^{rk}\cdot \Gamma^n\cdot |A|^{r-|I|-1}$.
\end{lemma}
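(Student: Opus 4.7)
The plan is to use Lemma \ref{lemma-few-disjoint-solutions} via a maximality argument, following the sketch in the paragraph preceding the lemma statement. Let me write $U = \spn(x_i \mid i \in I)$ (so $\dim U = |I|$), and let $\mathcal{S}$ denote the set of solutions to be counted. I will first choose a maximal subcollection $\mathcal{C} = \{(x_1^{(\l)}, \dots, x_k^{(\l)}) \mid \l = 1, \dots, L\} \subseteq \mathcal{S}$ for which the sets $\{\spn(\proj_{\Fpn/U}(x_j^{(\l)})) \mid j \in [k] \sm I\}$ indexed by $\l \in [L]$ are pairwise disjoint. Applying Lemma \ref{lemma-few-disjoint-solutions} directly to $\mathcal{C}$ then yields $L \leq k^{k+1} \Gamma^n$.

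Next I will exploit maximality: for every $(x_1, \dots, x_k) \in \mathcal{S}$ there must exist $\l \in [L]$ and indices $t, j \in [k] \sm I$ such that $\spn(\proj_{\Fpn/U}(x_t)) = \spn(\proj_{\Fpn/U}(x_j^{(\l)}))$. (If the tuple is already in $\mathcal{C}$ this is trivial; otherwise adding it to $\mathcal{C}$ would break pairwise disjointness.) I will then enumerate $\mathcal{S}$ by first choosing such a triple $(\l, t, j)$ --- giving at most $L \cdot k \cdot k \leq k^{k+3} \Gamma^n$ options --- and then bounding the number of $(x_1, \dots, x_k) \in \mathcal{S}$ consistent with a fixed triple.

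Given such a triple, $x_t$ is constrained to the preimage under $\proj_{\Fpn/U}$ of the one-dimensional subspace $W = \spn(\proj_{\Fpn/U}(x_j^{(\l)})) \su \Fpn/U$. This preimage is an $(|I|+1)$-dimensional subspace of $\Fpn$, so there are at most $p^{|I|+1}$ options for $x_t$. Using $\dim \spn(x_1, \dots, x_k) = r$ together with the fact that $(x_i)_{i \in I}$ and $x_t$ are already linearly independent, I will pick a subset $J' \subseteq [k] \sm (I \cup \{t\})$ of size $r - |I| - 1$ such that $(x_i)_{i \in I}$, $x_t$, and $(x_j)_{j \in J'}$ together form a basis of $\spn(x_1, \dots, x_k)$; there are at most $\binom{k}{r-|I|-1} \leq 2^k$ such subsets, and the vectors $x_j$ for $j \in J'$ then range freely over $A$, contributing a factor of $|A|^{r-|I|-1}$. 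Finally, each of the remaining $k - r$ vectors $x_j$ is forced to lie in the $r$-dimensional space $\spn(x_1, \dots, x_k)$, contributing at most $p^{r(k-r)}$ options in total.

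Multiplying these factors gives a bound of $k^{k+3} \cdot 2^k \cdot p^{|I|+1+r(k-r)} \cdot \Gamma^n \cdot |A|^{r-|I|-1}$. Since $|I|+1 \leq r$ and $r \geq 1$, the exponent satisfies $|I|+1+r(k-r) \leq r(k-r+1) \leq rk$, yielding the claimed bound. The key conceptual step --- and the part where care is most needed --- is the maximality argument: the one-dimensional constraint it produces for $x_t$ is exactly what turns the naive basis-counting bound of $|A|^{r-|I|}$ into the sharper $|A|^{r-|I|-1}$ demanded by the statement. Everything else is bookkeeping, though one must verify carefully that the triple $(\l, t, j)$ can always be chosen (so that $\mathcal{C}$ covers all of $\mathcal{S}$) and that the basis extension picking $J'$ is well-defined.
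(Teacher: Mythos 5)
Your proof is correct and takes essentially the same approach as the paper: fix a maximal disjoint subcollection (bounded via Lemma \ref{lemma-few-disjoint-solutions}), use maximality to force each counted solution to hit one of the spaces $U+\spn(x_j^{(\l)})$ at some coordinate $t$, and then extend $\{x_i : i\in I\}\cup\{x_t\}$ to a basis indexed by $I\cup\{t\}\cup J'$ to split the remaining coordinates into $|A|$-many choices on $J'$ and $p^r$-many choices elsewhere. The only cosmetic difference is that the paper first bounds the size of the entire union $\bigcup_\ell\bigcup_j W_j^{(\ell)}$ and enumerates only $(t,S)$, whereas you enumerate the triple $(\ell,t,j)$ and bound $x_t$ inside a single $W_j^{(\ell)}$ of size $p^{|I|+1}$; both bookkeepings land on the same bound.
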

\begin{proof}
Let $U=\spn(x_i\mid i\in I)$. Furthermore, let us fix a list of solutions $(x_1^{(\l)},\dots,x_k^{(\l)})\in A^k$ for $\l=1,\dots,L$ to ($\star$) of maximum possible length $L$ such that for all $\l=1,\dots,L$ we have $\omega(x_1^{(\l)},\dots,x_k^{(\l)}))=w$ and $\mathcal{I}(x_1^{(\l)},\dots,x_k^{(\l)}))=I$ and $x_i^{(\l)}=x_i$ for all $i\in I$, and such that the sets $\{\spn(\proj_{\Fpn/U}(x_j^{(\l)}))\mid j\in [k]\sm I\}$ are disjoint for all $\l=1,\dots,L$. 

Then by Lemma \ref{lemma-few-disjoint-solutions} we must have $L\leq k^{k+1}\cdot \Gamma^n$ (and in particular, such a list of maximum possible length exists). For every $\l=1,\dots,L$ and every $j\in [k]\sm I$, define
\[W_j^{(\l)}=\spn(x_i^{(\l)}\mid i\in I\cup\{j\})=\spn(x_i\mid i\in I)+\spn(x_j^{(\l)})=U+\spn(x_j^{(\l)}),\]
and note that each $W_j^{(\l)}$ is a subspace of $\Fpn$ of dimension at most $\vert I\vert +1=\lfloor w/(k+1)\rfloor+1\leq r$. Hence the union $\bigcup_{\l=1}^{L}\bigcup_{j\in [k]\sm I} W_j^{(\l)}$ is a subset of $\Fpn$ of size
\begin{equation}\label{eq-choices-x-t}
\left|\bigcup_{\l=1}^{L}\bigcup_{j\in [k]\sm I} W_j^{(\l)}\right|\leq L\cdot k\cdot p^r\leq k^{k+1}\cdot \Gamma^n\cdot k\cdot p^r=k^{k+2}p^r\cdot \Gamma^n.
\end{equation}

\begin{claim}\label{claim-index-t-set-S}
Suppose $x_j\in A$ for $j\in [k]\sm I$ are vectors such that $(x_1,\dots,x_k)\in A^k$ is a solution to  ($\star$) with $\omega(x_1,\dots,x_k)=w$ , $\mathcal{I}(x_1,\dots,x_k)=I$ and $\dim\spn(x_1,\dots,x_k)=r$. Then there exists an index $t\in [k]\sm I$ and a subset $S\su [k]\sm (I\cup \{t\})$of size $|S|=r-|I|-1$ such that $x_t\in \bigcup_{\l=1}^{L}\bigcup_{j\in [k]\sm I} W_j^{(\l)}$ and such that $x_1,\dots,x_k\in \spn(x_i\mid i\in I\cup \{t\}\cup S)$.
\end{claim}
\begin{proof}
By maximality of our chosen list of solutions $(x_1^{(\l)},\dots,x_k^{(\l)})$, it cannot be possible to extend this list by taking $(x_1^{(L+1)},\dots,x_k^{(L+1)})=(x_1,\dots,x_k)$. Since $(x_1,\dots,x_k)$ satisfies all of the other conditions, this means that we must have
\[\{\spn(\proj_{\Fpn/U}(x_j))\mid j\in [k]\sm I\}\cap \{\spn(\proj_{\Fpn/U}(x_j^{(\l)}))\mid j\in [k]\sm I\}\neq \emptyset\]
or some $\l\in [L]$. Hence we can find $t\in [k]\sm I$ and $j\in [k]\sm I$ and $\l\in [L]$ such that $\spn(\proj_{\Fpn/U}(x_t))=\spn(\proj_{\Fpn/U}(x_j^{(\l)}))$. In particular, we have $\proj_{\Fpn/U}(x_t)\in \spn(\proj_{\Fpn/U}(x_j^{(\l)}))$. This means that $x_t\in U+\spn(x_j^{(\l)})=W_j^{(\l)}$.

Thus, we have found the desired index $t\in [k]\sm I$ with $x_t\in \bigcup_{\l=1}^{L}\bigcup_{j\in [k]\sm I} W_j^{(\l)}$. It remains to find a set $S\su [k]\sm (I\cup \{t\})$ with the desired conditions.

Since $\mathcal{I}(x_1,\dots,x_k)=I$, the set $I$ is admissible for $(x_1,\dots,x_k)$, and so by condition (i) in Definition \ref{defi-admissible} the vectors $x_i$ for $i\in I$ are linearly independent. Furthermore, by condition (ii) in Definition \ref{defi-admissible} we have $x_t\not\in \spn(x_i\mid i\in I)=U$. Hence the vectors $x_i$ for $i\in I\cup \{t\}$ are linearly independent. We can therefore find a subset  $S\su [k]\sm (I\cup \{t\})$ such that the vectors $x_i$ for $i\in I\cup \{t\}\cup S$ form a basis of the space $\spn(x_1,\dots,x_k)$. Then we clearly have $x_1,\dots,x_k\in \spn(x_i\mid i\in I\cup \{t\}\cup S)$. Furthermore, as  $\dim\spn(x_1,\dots,x_k)=r$, we also have $\vert S\vert=r-|I|-1$, as desired.
\end{proof}

In order to prove the lemma, we need to show that the number of choices for $(x_j\mid j\in [k]\sm I)$ satisfying the conditions in Claim \ref{claim-index-t-set-S} is at most $k^{k+3}2^kp^{rk}\cdot \Gamma^n\cdot |A|^{r-|I|-1}$. We will count by distinguishing all possibilities for the index $t\in [k]\sm S$ and the set $S\su [k]\sm (I\cup \{t\})$ obtained from  Claim \ref{claim-index-t-set-S}. There are clearly at most $k$ possibilities for $t$ and at most $2^k$ possibilities for $S$.

Hence it suffices to prove that for every fixed $t\in [k]\sm S$ and every fixed set $S\su [k]\sm (I\cup \{t\})$ of size $|S|=r-|I|-1$ there are at most $k^{k+2}p^{rk}\cdot \Gamma^n\cdot |A|^{r-|I|-1}$ possibilities for choosing vectors $x_j\in A$ for $j\in [k]\sm I$ such that $x_t\in \bigcup_{\l=1}^{L}\bigcup_{j\in [k]\sm I} W_j^{(\l)}$ and $x_1,\dots,x_k\in \spn(x_i\mid i\in I\cup \{t\}\cup S)$.

By (\ref{eq-choices-x-t}) the number of possibilities for $x_t$ is at most $k^{k+2}p^r\cdot \Gamma^n$. For every $j\in S$, the number of possibilities for $x_j\in A$ is at most $|A|$. Finally, after making all these choices, for each of the remaining $j\in [k]\sm(I\cup \{t\}\cup S)$, we have at most $p^r$ choices for $x_j$, since $x_j\in \spn(x_i\mid i\in I\cup \{t\}\cup S)$ and $\dim \spn(x_i\mid i\in I\cup \{t\}\cup S)\leq |I|+1+|S|=r$ (recall that the vectors $x_i$ for $i\in I$ are fixed). Thus, the number of possibilities for choosing $x_j\in A$ for $j\in [k]\sm I$ with the above properties is indeed at most
\[k^{k+2}p^r\cdot \Gamma^n\cdot |A|^{|S|}\cdot (p^r)^{k-|I|-|S|-1}=k^{k+2}p^r\cdot \Gamma^n\cdot |A|^{r-|I|-1}\cdot (p^r)^{k-r}\leq k^{k+2}p^{rk}\cdot \Gamma^n\cdot |A|^{r-|I|-1}.\]
This finishes the proof of the lemma.
\end{proof}

By adding up the bound in Lemma \ref{lemma-number-solutions-I-fixed} over all possible choices of the subset $I\su [k]$ and the vectors $x_i\in A$ for $i\in I$, we can show the following corollary.

\begin{corollary}\label{coro-number-solutions-total}
For any fixed $r$ with $\lfloor w/(k+1)\rfloor+1\leq r\leq k$, the number of solutions $(x_1,\dots,x_k)\in A^k$ to the system ($\star$) with $\omega(x_1,\dots,x_k)=w$ and $\dim\spn(x_1,\dots,x_k)=r$ is at most $(2k)^{2k}p^{rk}\cdot \Gamma^n\cdot |A|^{r-1}$.
\end{corollary}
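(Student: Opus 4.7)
The proof will proceed by straightforward summation of the bound in Lemma \ref{lemma-number-solutions-I-fixed} over all relevant choices of the index set $I$ and the vectors $x_i$ for $i\in I$. The key observation is Claim \ref{claim-possible-dimensions}: for every solution $(x_1,\dots,x_k)\in A^k$ to $(\star)$ with $\omega(x_1,\dots,x_k)=w$, the associated index set $\mathcal{I}(x_1,\dots,x_k)\subseteq [k]$ has size exactly $\lfloor w/(k+1)\rfloor$. This lets us partition the solutions we want to count according to the value of $\mathcal{I}(x_1,\dots,x_k)$ and the corresponding tuple $(x_i\mid i\in \mathcal{I}(x_1,\dots,x_k))$.

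Concretely, I would first fix a subset $I\subseteq [k]$ with $|I|=\lfloor w/(k+1)\rfloor$ and a tuple of vectors $(x_i\mid i\in I)\in A^I$. By Lemma \ref{lemma-number-solutions-I-fixed}, the number of solutions $(x_1,\dots,x_k)\in A^k$ to $(\star)$ that extend this fixed tuple and satisfy $\omega(x_1,\dots,x_k)=w$, $\mathcal{I}(x_1,\dots,x_k)=I$, and $\dim\spn(x_1,\dots,x_k)=r$ is at most $k^{k+3}\cdot 2^k\cdot p^{rk}\cdot \Gamma^n\cdot |A|^{r-|I|-1}$. Summing over the at most $|A|^{|I|}$ choices of the tuple $(x_i\mid i\in I)$ yields an upper bound of $k^{k+3}\cdot 2^k\cdot p^{rk}\cdot \Gamma^n\cdot |A|^{r-1}$ for each fixed $I$.

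Next, I would sum over the $\binom{k}{|I|}\leq 2^k$ choices of $I\subseteq [k]$ of size $\lfloor w/(k+1)\rfloor$. This produces the total bound
\[
2^k\cdot k^{k+3}\cdot 2^k\cdot p^{rk}\cdot \Gamma^n\cdot |A|^{r-1}=4^k\cdot k^{k+3}\cdot p^{rk}\cdot \Gamma^n\cdot |A|^{r-1}.
\]
Finally, I would verify the numerical inequality $4^k\cdot k^{k+3}\leq (2k)^{2k}=4^k\cdot k^{2k}$, which reduces to $k+3\leq 2k$, i.e.\ $k\geq 3$. This is guaranteed by the standing assumption $k\geq 2m+1+\lfloor w/(k+1)\rfloor\geq 3$ in Proposition \ref{propo-weight}.

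There is no real obstacle here: the entire content of this corollary lies in Lemma \ref{lemma-number-solutions-I-fixed}, and what remains is bookkeeping. The only mildly delicate point is to make sure that every solution being counted is accounted for exactly once in the partition by the pair $(\mathcal{I}(x_1,\dots,x_k),(x_i\mid i\in \mathcal{I}(x_1,\dots,x_k)))$, which is immediate from the definition of $\mathcal{I}$ as a fixed (if arbitrary) admissible set of maximum weight and from Claim \ref{claim-possible-dimensions} fixing its size.
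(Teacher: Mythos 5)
Your proof is correct and matches the paper's argument essentially verbatim: fix $I$ of size $\lfloor w/(k+1)\rfloor$ using Claim \ref{claim-possible-dimensions}, sum the bound from Lemma \ref{lemma-number-solutions-I-fixed} over the $|A|^{|I|}$ choices of $(x_i\mid i\in I)$ and the at most $2^k$ choices of $I$, and close with $4^k k^{k+3}\leq (2k)^{2k}$ via $k\geq 3$.
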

\begin{proof}
By Claim \ref{claim-possible-dimensions} every such solution $(x_1,\dots,x_k)$ satisfies $|\mathcal{I}(x_1,\dots,x_k)|=\lfloor w/(k+1)\rfloor$.

We claim that for each set $I\su [k]$ with $|I|=\lfloor w/(k+1)\rfloor$, there are at most $k^{k+3}2^{k}p^{rk}\cdot \Gamma^n\cdot |A|^{r-1}$ solutions $(x_1,\dots,x_k)\in A^k$ to the system ($\star$) with $\omega(x_1,\dots,x_k)=w$ and $\dim\spn(x_1,\dots,x_k)=r$ and $\mathcal{I}(x_1,\dots,x_k)=I$. Indeed, there are $|A|^{|I|}$ possibilities to choose vectors $x_i\in A$ for $i\in I$, and after fixing these vectors the desired number of solutions $(x_1,\dots,x_k)\in A^k$ is by Lemma \ref{lemma-number-solutions-I-fixed} at most $k^{k+3}2^kp^{rk}\cdot \Gamma^n\cdot |A|^{r-|I|-1}$. So in total we have indeed at most $|A|^{|I|}\cdot k^{k+3}2^kp^{rk}\cdot \Gamma^n\cdot |A|^{r-|I|-1}=k^{k+3}2^{k}p^{rk}\cdot \Gamma^n\cdot |A|^{r-1}$ solutions for any given $I$.

Since the number of possible subsets $I\su [k]$ with $|I|=\lfloor w/(k+1)\rfloor$ is clearly at most $2^k$, the number of solutions $(x_1,\dots,x_k)\in A^k$ as in the statement of the corollary is at most
\[2^k\cdot k^{k+3}2^{k}p^{rk}\cdot \Gamma^n\cdot |A|^{r-1}\leq (2k)^{2k}p^{rk}\cdot \Gamma^n\cdot |A|^{r-1}.\]
Here, we used that $k\geq 3$ since $k\geq 2m+1+\lfloor w/(k+1)\rfloor$ and $m\geq 1$.
\end{proof}

As mentioned above, we now perform a random subspace sampling argument. Choose the unique integer $d$ such that 
\begin{equation}\label{eq-definition-d}
\frac{1}{(2k)^{2k+1}\cdot p^{2k+1}}\cdot\left(\frac{p}{\Gamma}\right)^{n/(k-1)}<p^d \leq \frac{1}{(2k)^{2k+1}\cdot p^{2k}}\cdot \left(\frac{p}{\Gamma}\right)^{n/(k-1)}.
\end{equation}

\begin{claim}\label{claim-value-d}
We have $2\leq d\leq n$.
\end{claim}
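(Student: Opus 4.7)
The plan is to verify both inequalities directly from the defining interval (\ref{eq-definition-d}) for $p^d$, chasing through the quantitative hypotheses on $n$ and $k$ that were set up earlier in the proof of Proposition \ref{propo-weight}.

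For the upper bound $d \le n$, I would bound the right endpoint of (\ref{eq-definition-d}) by $p^n$. Since $\Gamma \ge 1$, we have $p/\Gamma \le p$, and since the running assumption $k \ge 2m+1+\lfloor w/(k+1)\rfloor$ combined with $m \ge 1$ gives $k-1 \ge 2$, it follows that $(p/\Gamma)^{n/(k-1)} \le p^{n/(k-1)} \le p^n$. The prefactor $1/((2k)^{2k+1} p^{2k})$ is at most $1$, so the right endpoint of (\ref{eq-definition-d}) is at most $p^n$, yielding $d \le n$.

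For the lower bound $d \ge 2$, I would invoke the standing hypothesis $(p/\Gamma)^n > (2kp^2)^{(2k+1)(k-1)}$ from earlier in the proof. Taking the $(k-1)$-th root gives $(p/\Gamma)^{n/(k-1)} > (2k)^{2k+1} p^{4k+2}$, and then dividing by $(2k)^{2k+1} p^{2k+1}$ shows that the left endpoint of the interval in (\ref{eq-definition-d}) strictly exceeds $p^{2k+1} \ge p$. By the left inequality in (\ref{eq-definition-d}) we therefore have $p^d > p$, and because $p^d$ is a power of $p$, we conclude $d \ge 2$.

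There is no real obstacle of substance: the verification is pure arithmetic bookkeeping. The only thing worth noting is that the exponent $(2k+1)(k-1)$ in the standing hypothesis was precisely tuned so that, after taking a $(k-1)$-th root to match the $n/(k-1)$ appearing in (\ref{eq-definition-d}), the factor $(2kp^2)^{2k+1}$ cleanly absorbs the denominator $(2k)^{2k+1} p^{2k+1}$ with a power of $p$ left to spare.
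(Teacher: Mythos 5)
Your proof is correct and matches the paper's argument essentially step for step: both bound the right endpoint of (\ref{eq-definition-d}) by $p^n$ using $\Gamma\geq 1$ and $k-1\geq 1$, and both take the $(k-1)$-th root of the hypothesis $(p/\Gamma)^n>(2kp^2)^{(2k+1)(k-1)}$ to see the left endpoint exceeds $p^{2k+1}$, which gives $d\geq 2$. The only cosmetic difference is that the paper notes $d\geq 2k+1$ directly from $p^d>p^{2k+1}$, whereas you weaken to $p^d>p$ first; both yield the claim.
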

\begin{proof}
Recall that we assumed that $(p/\Gamma)^n>(2kp^2)^{(2k+1)(k-1)}$. Hence
\[p^d>\frac{1}{(2k)^{2k+1}\cdot p^{2k+1}}\cdot (2kp^2)^{2k+1}=p^{2k+1},\]
and consequently $d\geq 2k+1\geq 2$. For the upper bound on $d$, recall that $1\leq \Gamma<p$, so $p^d\leq (p/\Gamma)^{n/(k-1)}\leq p^{n}$ and therefore $d\leq n$.
\end{proof}

Let us now consider a uniformly random $d$-dimensional subspace $V\su \Fpn$. The following claims give useful bounds for the expected number of vectors in $A\cap V$ and the expected number of solutions $(x_1,\dots,x_k)\in A^k$ with $\omega(x_1,\dots,x_k)=w$ and $x_1,\dots,x_k\in V$.

\begin{claim}\label{claim-expected-vectors-sampling2}
We have
\[\EE[|A\cap V|]> \frac{3}{4}\cdot \frac{p^d}{p^n}\cdot  |A|.\]
\end{claim}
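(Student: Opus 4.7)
The plan is to use linearity of expectation together with Lemma \ref{lemma-subspace-probability} applied once per vector in $A$. Since by assumption $A\su\Fpn\sm\{0\}$, each vector $x\in A$ is nonzero, so Lemma \ref{lemma-subspace-probability} (with $s=1$) gives $\PP[x\in V]=(p^d-1)/(p^n-1)$. Summing over $x\in A$ yields
\[
\EE[|A\cap V|]=|A|\cdot\frac{p^d-1}{p^n-1},
\]
so it suffices to verify the purely numerical inequality $(p^d-1)/(p^n-1)>(3/4)\cdot p^d/p^n$.

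After rearranging, this reduces to showing $(1-p^{-d})(1+1/(p^n-1))>3/4$. I would handle the two factors separately: by Claim \ref{claim-value-d} we have $d\geq 2$, and since $p\geq 2$ this gives $p^d\geq 4$, hence $1-p^{-d}\geq 3/4$. At the same time $n\geq d\geq 2$ ensures $1+1/(p^n-1)>1$ strictly, so the product is strictly larger than $3/4$. The entire argument is a short direct calculation, with no real obstacle; the only nontrivial input is the lower bound $d\geq 2$ supplied by Claim \ref{claim-value-d}, which is what makes the factor $(1-p^{-d})$ large enough to absorb the constant $3/4$.
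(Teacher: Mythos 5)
Your proof is correct and follows essentially the same route as the paper: apply Lemma \ref{lemma-subspace-probability} with $s=1$ to each nonzero $x\in A$, then verify $(p^d-1)/(p^n-1)>\tfrac{3}{4}p^d/p^n$ using $d\geq 2$ from Claim \ref{claim-value-d} (so $p^d\geq 4$, giving $1-p^{-d}\geq 3/4$) and $p^n-1<p^n$ for strictness. Your factorization $(1-p^{-d})(1+1/(p^n-1))$ is just a slightly more explicit rewriting of the same two bounds the paper invokes.
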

\begin{proof}
Recall that $0\not\in A$. So by Lemma \ref{lemma-subspace-probability} (applied with $s=1$), for each vector $x\in A$ we have
\[\PP[x\in V]=\frac{p^d-1}{p^n-1}> \frac{3}{4}\cdot \frac{p^d}{p^n}.\]
Here we used that $p^d-1\geq (3/4)p^d$ since $p\geq 2$ and $d\geq 2$ (see Claim \ref{claim-value-d}). Now, adding this up over all $x\in A$ gives the desired bound.
\end{proof}

\begin{claim}\label{claim-expected-solutions-dimension}
For any $r$ with $\lfloor w/(k+1)\rfloor+1\leq r\leq k$, the expected number of solutions $(x_1,\dots,x_k)\in A^k$ to ($\star$) with $\omega(x_1,\dots,x_k)=w$ and $\dim\spn(x_1,\dots,x_k)=r$ such that $x_1,\dots,x_k\in V$ is at most
\[\frac{1}{2k}\cdot \frac{p^d}{p^n}\cdot  |A|\]
\end{claim}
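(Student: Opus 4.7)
The plan is to combine a counting bound with a probability estimate, and then check that the resulting product is dominated by the target quantity, using the careful choice of $d$ in (\ref{eq-definition-d}) to make the exponents balance.

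First, I would use Corollary \ref{coro-number-solutions-total} to bound the number of solutions $(x_1,\dots,x_k)\in A^k$ to $(\star)$ with $\omega(x_1,\dots,x_k)=w$ and $\dim\spn(x_1,\dots,x_k)=r$ by $(2k)^{2k}p^{rk}\Gamma^n\cdot |A|^{r-1}$. Next, for any fixed such solution, since the span is $r$-dimensional, I can select $r$ linearly independent vectors $y_1,\dots,y_r$ from among $x_1,\dots,x_k$. Because $V$ is a subspace, the event $\{x_1,\dots,x_k\in V\}$ is equivalent to $\{y_1,\dots,y_r\in V\}$, and Lemma \ref{lemma-subspace-probability} gives $\PP[y_1,\dots,y_r\in V]\leq (p^d/p^n)^r$. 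By linearity of expectation, the expected count is at most
\[(2k)^{2k}p^{rk}\Gamma^n\cdot |A|^{r-1}\cdot \left(\frac{p^d}{p^n}\right)^{r}.\]

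To finish, I need to verify
\[(2k)^{2k+1}p^{rk}\Gamma^n |A|^{r-2}\left(\frac{p^d}{p^n}\right)^{r-1}\leq 1. \qquad (\ast)\]
I would split into two cases. For $r=1$ (possible only when $w\leq k$), $(\ast)$ reduces to $(2k)^{2k+1}p^k\Gamma^n\leq |A|$, which is immediate from the lower bound $|A|>(2kp)^{2k+1}\Gamma^n$ in (\ref{eq-size-A-vs-Gamma}). For $r\geq 2$, I use the trivial bound $|A|\leq p^n$ to replace $|A|^{r-2}$ by $p^{n(r-2)}$, substitute the upper bound on $p^d$ from (\ref{eq-definition-d}), and exploit $\Gamma\leq p$ via the identity $\Gamma^n(p/\Gamma)^{n(r-1)/(k-1)}=\Gamma^{n(k-r)/(k-1)}p^{n(r-1)/(k-1)}\leq p^n$. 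After cancellation, everything collapses to $((2k)^{2k+1})^{-(r-2)}\cdot p^{-k(r-2)}\leq 1$.

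The main obstacle is purely bookkeeping: several exponents in $p$, $\Gamma$, $p^d$, and $|A|$ must match up precisely. The whole point of the definition of $d$ in (\ref{eq-definition-d}) is that the factor $\Gamma^n$ coming from the slice-rank-based Corollary \ref{coro-number-solutions-total} is exactly compensated by the subspace-containment probability $(p^d/p^n)^{r}$ raised to the appropriate power, with the $(k-1)$-th root chosen so that the worst case $r=2$ is just barely below $1$. Provided this check is carried out carefully, no further combinatorial or probabilistic input is needed.
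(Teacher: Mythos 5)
Your proof is correct and follows essentially the same route as the paper: you bound the count via Corollary \ref{coro-number-solutions-total}, bound the per-solution containment probability via Lemma \ref{lemma-subspace-probability} applied to $r$ independent vectors in the span, and verify the resulting product is dominated by the target by splitting into $r=1$ (using (\ref{eq-size-A-vs-Gamma})) and $r\ge 2$ (using $|A|\le p^n$, the upper bound on $p^d$ from (\ref{eq-definition-d}), and $\Gamma\le p$). The exponent bookkeeping you sketch, including the simplification of $\Gamma^n(p/\Gamma)^{n(r-1)/(k-1)}$ and the collapse to $((2k)^{2k+1})^{-(r-2)}p^{-k(r-2)}\le 1$, matches the paper's computation exactly.
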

\begin{proof}
By Corollary \ref{coro-number-solutions-total}, the total number of solutions $(x_1,\dots,x_k)\in A^k$ to ($\star$) with $\omega(x_1,\dots,x_k)=w$ and $\dim\spn(x_1,\dots,x_k)=r$ is at most $(2k)^{2k}p^{rk}\cdot \Gamma^n\cdot |A|^{r-1}$. For each of these solutions, by Lemma \ref{lemma-subspace-probability} (applied to a list of $r$ linearly independent vectors among $x_1,\dots,x_k$), the probability of having $x_1,\dots,x_k\in V$ is at most $(p^d/p^n)^r$.

Thus, in the case $r\geq 2$ the expected number of solutions in the claim statement is by $|A|\leq p^n$ and (\ref{eq-definition-d}) at most
\begin{multline*}
(2k)^{2k}p^{rk}\cdot \Gamma^n\cdot |A|^{r-1}\cdot \left(\frac{p^d}{p^n}\right)^r\leq \frac{1}{2k}\cdot (2k)^{2k+1}p^{rk}\cdot \Gamma^n\cdot |A|\cdot (p^n)^{r-2}\cdot  \left(\frac{p^d}{p^n}\right)^{r-1}\cdot \frac{p^d}{p^n}\\
=\frac{1}{2k}\cdot \frac{p^d}{p^n}\cdot  |A|\cdot(2k)^{2k+1}p^{rk}\cdot \frac{\Gamma^n}{p^n}\cdot (p^d)^{r-1}\\
\leq \frac{1}{2k}\cdot \frac{p^d}{p^n}\cdot  |A|\cdot(2k)^{2k+1}p^{rk}\cdot \left(\frac{1}{(2k)^{2k+1}\cdot p^{2k}}\right)^{r-1}\cdot \left(\frac{\Gamma}{p}\right)^{n\cdot \left(1-\frac{r-1}{k-1}\right)}\leq \frac{1}{2k}\cdot \frac{p^d}{p^n}\cdot  |A|
\end{multline*}
where in the last step we used that $\Gamma<p$. It remains to consider the case $r=1$. In this case the expected number of solutions as in the claim statement is at most
\[(2k)^{2k}p^{rk}\cdot \Gamma^n\cdot |A|^{r-1}\cdot \left(\frac{p^d}{p^n}\right)^r=(2k)^{2k}p^{k}\cdot \Gamma^n\cdot \frac{p^d}{p^n}\leq \frac{1}{2k}\cdot  |A|\cdot \frac{p^d}{p^n},\]
where we used that $|A|\geq (2kp)^{2k+1}\cdot \Gamma^n\geq (2k)^{2k+1}p^{k}\cdot \Gamma^n$ by (\ref{eq-size-A-vs-Gamma}).
\end{proof}

\begin{corollary}\label{coro-expected-solutions-total}
The expected number of solutions $(x_1,\dots,x_k)\in A^k$ to ($\star$) with $\omega(x_1,\dots,x_k)=w$ and $x_1,\dots,x_k\in V$ is at most
\[\frac{1}{2}\cdot \frac{p^d}{p^n}\cdot  |A|.\]
\end{corollary}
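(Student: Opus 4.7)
The corollary follows by partitioning according to $r = \dim\spn(x_1,\dots,x_k)$ and summing Claim \ref{claim-expected-solutions-dimension} over all admissible values of $r$. First I would invoke Claim \ref{claim-possible-dimensions}, which guarantees that every solution $(x_1,\dots,x_k) \in A^k$ to ($\star$) with $\omega(x_1,\dots,x_k) = w$ satisfies $\lfloor w/(k+1)\rfloor + 1 \leq \dim\spn(x_1,\dots,x_k) \leq k$. In particular, the number of possible values of $r = \dim\spn(x_1,\dots,x_k)$ is at most $k - \lfloor w/(k+1)\rfloor \leq k$.

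Next, by linearity of expectation, the expected number of solutions $(x_1,\dots,x_k) \in A^k$ to ($\star$) with $\omega(x_1,\dots,x_k) = w$ and $x_1,\dots,x_k \in V$ equals the sum, over $r$ ranging from $\lfloor w/(k+1)\rfloor + 1$ to $k$, of the expected number of such solutions with the additional constraint $\dim\spn(x_1,\dots,x_k) = r$. By Claim \ref{claim-expected-solutions-dimension}, each summand is at most $\frac{1}{2k} \cdot \frac{p^d}{p^n} \cdot |A|$.

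Since there are at most $k$ terms in the sum, the total is at most
\[
k \cdot \frac{1}{2k} \cdot \frac{p^d}{p^n} \cdot |A| = \frac{1}{2} \cdot \frac{p^d}{p^n} \cdot |A|,
\]
as claimed. There is no real obstacle here — this is a bookkeeping corollary; the substantive work lives in Claim \ref{claim-expected-solutions-dimension} (and further upstream, in Lemma \ref{lemma-few-disjoint-solutions}, where the slice rank bound for non-diagonal tensors is actually used).
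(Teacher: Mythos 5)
Your proof is correct and takes essentially the same approach as the paper: partition by $r = \dim\spn(x_1,\dots,x_k)$, apply Claim \ref{claim-possible-dimensions} to limit $r$ to at most $k$ values, and sum the bound from Claim \ref{claim-expected-solutions-dimension}.
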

\begin{proof}
By Claim \ref{claim-possible-dimensions}, for every  solution $(x_1,\dots,x_k)\in A^k$ to ($\star$) with $\omega(x_1,\dots,x_k)=w$ we must have $\lfloor w/(k+1)\rfloor+1\leq \dim\spn(x_1,\dots,x_k)\leq k$. Hence, the claim follows from Claim \ref{claim-expected-solutions-dimension} by summing over all integers $r$ with $\lfloor w/(k+1)\rfloor+1\leq r\leq k$ (noting that there are at most $k$ possibilities for $r$).
\end{proof}

Let $Z$ be the number of solutions $(x_1,\dots,x_k)\in A^k$ to ($\star$) with $\omega(x_1,\dots,x_k)=w$ and $x_1,\dots,x_k\in V$. By Corollary \ref{coro-expected-solutions-total}, we have $\EE[Z]\leq (1/2)\cdot (p^d/p^n)\cdot |A|$. Together with Claim \ref{claim-expected-vectors-sampling2} this yields
\[\EE[|A\cap V|-Z]> \frac{3}{4}\cdot \frac{p^d}{p^n}\cdot  |A|-\frac{1}{2}\cdot \frac{p^d}{p^n}\cdot  |A|=\frac{1}{4}\cdot \frac{p^d}{p^n}\cdot  |A| .\]
So let us fix an outcome for the $d$-dimensional subspace  $V\su \Fpn$ such that $|A\cap V|-Z> (1/4)\cdot (p^d/p^n)\cdot |A|$.

We can now define a subset $A^*\su A\cap V$ by deleting one vector from each solution $(x_1,\dots,x_k)\in A^k$ to ($\star$) with $\omega(x_1,\dots,x_k)=w$ and $x_1,\dots,x_k\in V$.  Then
\[|A^*|\geq |A\cap V|-Z> \frac{1}{4}\cdot \frac{p^d}{p^n}\cdot  |A|\]
and there do not exist any solutions $(x_1,\dots,x_k)\in (A^*)^k$ to ($\star$) with $\omega(x_1,\dots,x_k)=w$.

Our goal is now to obtain a contradiction by applying the induction hypothesis for $w-1$ to $A^*\su V\cong \Fp^d$. Note that using (\ref{eq-size-A}) and (\ref{eq-definition-d}), we have
\[|A^*|>\frac{1}{4}\cdot \frac{p^d}{p^n}\cdot  |A|\geq \frac{1}{4}\cdot \frac{p^d}{p^n}\cdot 4C'(2kp)^{2k+1} \cdot p^n\cdot \left(\frac{\Gamma}{p}\right)^{c'n/(k-1)}\geq C'\cdot p^d\cdot (2kp)^{c'(2k+1)}\cdot \left(\frac{\Gamma}{p}\right)^{c'n/(k-1)}\geq C'\cdot p^{(1-c')d}.\]
Since $V\cong \Fp^d$, we can interpret $A^*\su V$ as a subset of $\Fp^d$. As $|A^*|>C'\cdot p^{(1-c')d}$, by the induction hypothesis for $w-1$, there must be a solution $(x_1,\dots,x_k)\in (A^*)^k$ to the system  ($\star$) with $\omega(x_1,\dots,x_k)>w-1$. By our construction of $A^*$, we have $\omega(x_1,\dots,x_k)\neq w$. This means that we must have $\omega(x_1,\dots,x_k)>w$, but since $(x_1,\dots,x_k)\in (A^*)^k\su A^k$ this is a contradiction to our assumption on the set $A$. This  finishes the inductive proof of Proposition \ref{propo-weight}.

\section{Concluding Remarks}
\label{sect-concluding-remarks}

Recall that in our main result, Theorem \ref{theo-distinct}, we assumed that every $m\times m$ minor of the $m\times k$ matrix $(a_{j,i})_{j,i}$ is non-singular. As discussed in the introduction, it is not possible to remove this assumption completely, but it may be possible to weaken it in some ways.

In order for the conclusion of Theorem \ref{theo-distinct} to hold, it is certainly necessary to assume that no equation of the form $x_i-x_{i'}=0$ for distinct $i,i'\in [k]$ is in the span of the equations in the system ($\star$), since otherwise there do not exist any solution $(x_1,\dots,x_k)\in (\Fpn)^k$ to ($\star$) where $x_1,\dots,x_k$ are distinct. In other words (using our other assumption that $a_{j,1}+\dots+a_{j,k}=0$ for $j=1,\dots,m$, which is also necessary as discussed in the introduction), in Theorem \ref{theo-distinct} we certainly need to assume that the row-span of the $m\times k$ matrix $(a_{j,i})_{j,i}$ does not contain any vector with exactly two non-zero entries. 

If we assume that $a_{j,1}+\dots+a_{j,k}=0$ for $j=1,\dots,m$ and that the row-span of the $m\times k$ matrix $(a_{j,i})_{j,i}$ does not contain any vector with exactly two non-zero entries, then every subset $A\su \Fpn$ not containing solution $(x_1,\dots,x_k)\in A^k$ to ($\star$) with distinct $x_1,\dots,x_k$ must have size $|A|=o(p^n)$ as $n\to \infty$ (where $p$, $m$ and $k$ are fixed). This follows from an arithmetic removal lemma for solutions to systems of linear equations due to Kr\'{a}l', Serra, and Vena  \cite[Theorem 1]{kral-serra-vena} and independently Shapira \cite[Theorem 2.2]{shapira}. It would be plausible that one also has a bound of the form $|A|\leq C_{p,m,k}\cdot (\Gamma_{p,m,k}^*)^n$ with $\Gamma_{p,m,k}^*<p$ under these weaker assumptions, i.e.\ it would be plausible that Theorem \ref{theo-distinct} also holds with these weaker assumptions.

However, proving Theorem \ref{theo-distinct} under these weaker assumptions is most likely extremely difficult. In fact, proving such a statement (even with an assumption that the number of variables is very large in terms of the number of equations) would imply a bound of the form $|A|\leq C_{p,k}\cdot (\Gamma_{p,k}^*)^n$ with $\Gamma_{p,k}^*<p$ for the size of a $k$-term-progression-free subset $A\su \Fpn$. Indeed, for any (large) $K$, one can take ($\star$) to be the system of $k-1$ equations in $k+K$ variables consisting of the equations $x_i-2x_{i+1}+x_{i+2}=0$ for $i=1,\dots,k-2$ as well as the equation $Kx_{k}-x_{k+1}-\dots-x_{k+K}=0$. Then $a_{j,1}+\dots+a_{j,k}=0$ for $j=1,\dots,k-1$ and the row-span of the $(k-1)\times (k+K)$ matrix $(a_{j,i})_{j,i}$ does not contain any vector with exactly two non-zero entries. But for any solution $(x_1,\dots,x_{k+K})\in A^{k+K}$ with distinct $x_1,\dots,x_{k+K}$, the vectors $x_1,\dots,x_k$ form a non-constant $k$-term arithmetic progression. Hence any $k$-term-progression-free subset $A\su \Fpn$ in particular does not contain a solution $(x_1,\dots,x_{k+K})\in A^{k+K}$ to this system of equations with distinct $x_1,\dots,x_k$.

For $k\geq 4$, proving that a $k$-term-progression-free subset $A\su \Fpn$ has size $|A|\leq C_{p,k}\cdot (\Gamma_{p,k}^*)^n$ with $\Gamma_{p,k}^*<p$ is a big open problem, that has received the attention of many researchers, especially after Ellenberg and Gijswijt \cite{ellenberg-gijswijt} proved such a statement for $k=3$. Weakening the assumptions on the matrix $(a_{j,i})_{j,i}$ in Theorem \ref{theo-distinct} in the way discussed above is at least as difficult a problem, and therefore seems to be out of reach of current methods.

A more tractable problem might be to improve upon the value of the constant $\Gamma_{p,m,k}^*<p$ in Theorem \ref{theo-distinct} that our proof obtains. Given the inductive nature of the proof with the repeated subspace sampling arguments, this value is likely not optimal. It would be extremely interesting to determine whether one can take $\Gamma_{p,m,k}^*$ to be equal to the constant $\Gamma_{p,m,k}$ in Theorem \ref{theo-tao} as defined in (\ref{eq-defi-Gamma-tao}). Even in the case where ($\star$) consists only of one equation (i.e.\ in the case $m=1$) this is a widely open problem, and in the special case of the equation $x_1+\dots+x_p=0$ it has applications to bounding Erd\H{o}s-Ginzburg-Ziv constants (see \cite{fox-sauermann, naslund, sauermann}).


\begin{thebibliography}{99}

\bibitem{bateman-katz}  M. Bateman and N. H. Katz, \textit{New bounds on cap sets}, J. Amer. Math. Soc. \textbf{25} (2012), 585--613.

\bibitem{blasiak-et-al} J. Blasiak, T. Church, H. Cohn, J. A. Grochow, E. Naslund, W. F. Sawin, and C. Umans, \textit{On cap sets and the group-theoretic approach to matrix multiplication}, Discrete Anal. 2017, Paper No. 3, 27pp.

\bibitem{croot-lev-pach} E. Croot, V. F. Lev, and P. P. Pach, \textit{Progression-free sets in $\Z_4^n$ are exponentially small}, Ann. of Math. \textbf{185} (2017), 331--337.

\bibitem{edel} Y. Edel, \textit{Extensions of generalized product caps}, Des. Codes Cryptogr. \textbf{31} (2004), 5--14.

\bibitem{ellenberg-gijswijt} J. S. Ellenberg and D. Gijswijt, \textit{On large subsets of $\mathbb{F}_q^n$ with no three-term arithmetic progression}, Ann. of Math. \textbf{185} (2017), 339--343.

\bibitem{elsholtz-pach} C. Elsholtz and P. P. Pach, \textit{Caps and progression-free sets in $\mathbb{Z}_m^n$}. Des. Codes Cryptogr. \textbf{88} (2020), 2133--2170.

\bibitem{egz} P. Erd\H{o}s, A. Ginzburg, and A. Ziv, \textit{Theorem in the additive number theory}, Bull. Res. Council Israel \textbf{10F} (1961), 41--43.

\bibitem{erdos-turan-1936} P. Erd\H{o}s and P. Tur\'{a}n, \textit{On Some Sequences of Integers}, J. Lond. Math. Soc. \textbf{11} (1936), 261--264. 

\bibitem{fox-sauermann} J. Fox and L. Sauermann, \textit{Erd\H{o}s-Ginzburg-Ziv constants by avoiding three-term arithmetic progressions}, Electron. J. Combin. \textbf{25} (2018), Paper 2.14, 9 pp.

\bibitem{green-tao-1} B. Green and T. Tao, \textit{An inverse theorem for the Gowers $U^3(G)$-norm, with applications},  Proc. Edinb. Math. Soc. \textbf{51} (2008), 73--153.

\bibitem{green-tao-2} B. Green, and T. Tao, \textit{New bounds for Szemer\'{e}di's theorem, I: Progressions of length 4 in finite field geometries}, Proc. Lond. Math. Soc. \textbf{98} (2009), 365--392.

\bibitem{green-tao-2a} B. Green, and T. Tao, \textit{New bounds for Szemer\'{e}di's theorem, Ia: Progressions of length 4 in finite field geometries revisited}, preprint, 2012, arXiv:1205.1330.

\bibitem{grochow} J. A. Grochow, \textit{New applications of the polynomial method: the cap set conjecture and beyond}, Bull. Amer. Math. Soc. \textbf{56} (2019), 29--64. 

\bibitem{komlos-sulyok-szemeredi-1975} J. Koml\'{o}s, M. Sulyok, E. Szemer\'{e}di, \textit{Linear problems in combinatorial number theory}, Acta Math. Acad. Sci. Hungar. \textbf{26} (1975), 113--121. 

\bibitem{kral-serra-vena}  D. Kr\'{a}l', O. Serra, and L. Vena, \textit{A removal lemma for systems of linear equations over finite fields},  Israel J. Math. \textbf{187} (2012), 193--207.

\bibitem{lin-wolf}  Y. Lin, Y and J. Wolf, \textit{On subsets of $\mathbb{F}_q^n$ containing no $k$-term progressions}. European J. Combin. \textbf{31} (2010), 1398--1403.

\bibitem{meshulam} R. Meshulam, \textit{On subsets of finite abelian groups with no 3-term arithmetic progressions}, J. Combin. Theory Ser. A \textbf{71} (1995), 168--172.

\bibitem{mimura-tokushige-jcta} M. Mimura and N. Tokushige, \textit{Avoiding a shape, and the slice rank method for a system of equations}, preprint, 2019, arXiv:1909.10509.

\bibitem{mimura-tokushige-1} M. Mimura and N. Tokushige, \textit{Solving linear equations in a vector space over a finite fields I}, preprint, 2020, \url{http://www.cc.u-ryukyu.ac.jp/~hide/sol.pdf}.

\bibitem{mimura-tokushige-2} M. Mimura and N. Tokushige, \textit{Solving linear equations in a vector space over a finite fields II}, preprint, 2020, \url{http://www.cc.u-ryukyu.ac.jp/~hide/sol2.pdf}.

\bibitem{naslund} E. Naslund, \textit{Exponential Bounds for the Erd\H{o}s-Ginzburg-Ziv Constant},  J. Combin. Theory Ser. A \textbf{174} (2020), 105185, 19 pp.

\bibitem{ruzsa-1}  I. Z. Ruzsa, \textit{Solving a linear equation in a set of integers  I}, Acta Arith. \textbf{65} (1993), 259--282.

\bibitem{ruzsa-2}  I. Z. Ruzsa, \textit{Solving a linear equation in a set of integers II}, Acta Arith. \textbf{72} (1995), 385--397.

\bibitem{sauermann} L. Sauermann, \textit{On the size of subsets of $\mathbb{F}_p^n$ without $p$ distinct elements summing to zero}, preprint, 2019, arXiv:1904.09560.

\bibitem{sawin-tao} W. Sawin and T. Tao, \textit{Notes on the ``slice rank'' of tensors}, blog post, 2016, \url{https://terrytao.wordpress.com/2016/08/24/notes-on-the-slice-rank-of-tensors/}.

\bibitem{shapira} A. Shapira, \textit{A proof of Green's conjecture regarding the removal properties of sets of linear equations}, J. Lond. Math. Soc. \textbf{81} (2010), 355--373. 

\bibitem{tao} T. Tao, \textit{A symmetric formulation of the Croot-Lev-Pach-Ellenberg-Gijswijt capset bound}, blog post, 2016, \url{http://terrytao.wordpress.com/2016/05/18/a}.
\end{thebibliography}
\end{document}